  \newtheorem{theorem}{Theorem}[section]
 \newtheorem{definition}{Definition}[section]
    \newtheorem{lemma}{Lemma}[section]
    \newtheorem{proposition}{Proposition}[section]
\newtheorem{rem}{Remark}[section]
\newcommand{\set}[2]{\left\{#1\,:\,#2\right\}}
  \def\Proof{\noindent {\bf Proof : }$\ $\\ }
\numberwithin{equation}{section}
\newcommand{\Z}{\mathbb{Z}}
\newcommand{\R}{\mathbb{R}}
\newcommand{\Supp}{\rm Supp }
\newcommand{\bq}{\begin{equation}}
\newcommand{\eq}{\end{equation}}
\begin{document}

\title{Time-periodic forcing and asymptotic stability for the Navier-Stokes-Maxwell  equations}
\author{Slim Ibrahim\footnote{Department of Mathematics and Statistics, University of Victoria; e-mail: ibrahims@uvic.ca}, Nader Masmoudi\footnote{Courant Institute, New-York University; e-mail: masmoudi@courant.nyu.edu}, and Pierre Gilles Lemari\'e--Rieusset\footnote{Laboratoire de Math\'ematiques et Mod\'elisation d'\'Evry, UMR CNRS 9071, UEVE, ENSIIE; e-mail: plemarie@univ-evry.fr}}

\date{}\maketitle

\begin{abstract}   
For the 3D Navier-Stokes-Maxwell problem on the whole space and in the presence of external time-periodic forces, first we study the existence of time-periodic small solutions, and then we prove their asymptotic stability. We use new type of spaces to account for averaged decay in time.

$\ $\\{\bf Keywords: }  Navier--Stokes equations, periodic solutions, Maxwell's system, dyadic decomposition, maximal regularity, nonlinear estimates  \\
{\bf 2010 Mathematics Subject Classification: }   35B10,  35Q30, 76D05
\end{abstract}

\section*{Introduction.}
Physical and analytical models already exist for both electro-hydrodynamic and magneto-hydrodynamic. However quite often in actual situations, both combined electro and magneto-hydrodynamic effects occur. Recent works attempted to develop fully consistent, multi-dimensional, unsteady and incompressible flows of electrically conducting fluids under the simultaneous or separate influence of externally applied or internally generated electric and magnetic fields.  The approach is based on the use of fundamental laws of continuum mechanics and thermodynamics. See for example \cite{Ko-Dulikravich}. 
However, because of the considerable complexity of even simpler versions of the combined electro-magneto-hydrodynamic models, it is still hard to analyze, even numerically, the combined influence of electric and magnetic fields and the fluid flow. In this paper, we analyze an adiabatic situation where thermodynamical effects are neglected, and nonlinearities are reduced to the only action of Lorentz force.\\
More specifically, consider a three-dimensional incompressible, viscous and charged fluid with a velocity field $\vec u$. Fluid charged particles motion generates an electro-magnetic field $(\vec E,\vec B)$ satisfying Maxwell equations, and a current $\vec J$ that acts backs on the fluid through Lorentz force. We assume that the current is given by Ohm's law $J=\sigma(\vec E+\vec u\wedge\vec B)$. Thus, the Navier-Stokes-Maxwell system we study reads as 
\begin{equation}
\label{eqns}
\left\{\begin{split} \partial_t \vec u+\text{ div } (\vec u\otimes \vec u)&= \nu\Delta\vec u+\vec J\wedge \vec B+\vec F_{\rm per}-\vec\nabla p\\  \partial_t\vec E-\vec\nabla\wedge\vec B&=-\vec J+\vec G_{\rm per}
\\ \partial_t\vec B+\vec\nabla\wedge \vec E&=\vec H_{\rm per}
\\ \text{ div }\vec u= \text{ div }\vec  B&=0\\ \vec J&=\sigma(\vec E+\vec u\wedge\vec B).
\end{split}\right.
\end{equation}
Here, $\vec u,\;\vec E,\;\vec B:\mathbb R^+_t\times\mathbb R^3_x\longrightarrow \mathbb R^3$ are vector fields defined on $\mathbb R^3$, and  the scalar function  
$p$  stands for the pressure. 
The positive parameters $\nu$ and  $\sigma$ represent the viscosity of the fluid and the electric resistivity, respectively. The self-interaction force term $\vec J \times\vec B $ in the Navier-Stokes equations comes from the Lorentz force under a quasi-neutrality assumption of the net charge carried by the fluid. Notice that taking into account a moving reference frame of the fluid, yields the correction $u\times B$ to the classical Ohm's law and keeps Faraday's law invariant under Gallilean transformation. The second  equation in \eqref{eqns} is the Amp\`ere-Maxwell equation for the electric field $\vec E$. The third equation is nothing but Faraday's law.  
For a detailed introduction to similar models and the theory of MHD, we refer to Davidson \cite{Davidson01} and Biskamp \cite{Biskamp93}. 
In \eqref{eqns}, the external forces $\vec F_{\rm per}$,  $\vec  G_{\rm per}$ and  $\vec H_{\rm per}$ are taken time-periodic: for a fixed time period $T>0$, we have
$$
\vec  F_{\rm per}(t+T,x)=\vec F_{\rm per}(t,x), \  \vec  G_{\rm per}(t+T,x)=\vec G_{\rm per}(t,x), \ \vec  H_{\rm per}(t+T,x)=\vec H_{\rm per}(t,x). 
$$ 
Before going any further, let us emphasize that despite the possible non-physical full relevance of \eqref{eqns},  the system still captures the various mathematical challenges of the full  complicated original set of equations. Indeed, \eqref{eqns} is a coupling of a dissipative equation of parabolic type (Navier-Stokes) with a hyperbolic system (Maxwell's equations). Despite the presence of damping in Ohm's law, solutions to Maxwell's equations do not enjoy any smoothing effect, due to the hyperbolic nature of the equations. Moreover, as this will be seen, a few challenges also arise when dealing with different decay rates, for different linear parts, and also different frequency sizes caused by the coupling.\\

Note that the pressure $p$ can still be determined using Leray projection from $\vec u$ and $\vec J\wedge\vec B$ via an explicit 
Cald\'eron-Zygmund operator (see \cite{Chemin95} for instance):
$$ \vec\nabla p=\vec\nabla\frac{1}{\Delta}\text{ div } (\vec J\wedge\vec B+\vec F_{\rm per}- \text{ div } (\vec u\otimes \vec u)).$$
In all what follows, we denote the solution to \eqref{eqns} by
$$
\vec\Gamma:=(\vec U,\vec E,\vec B).
$$
When no exterior forces act on the system \eqref{eqns}, the initial value problem reads

\begin{equation}\label{non force eqns}
\left\{\begin{split} \partial_t \vec u+\text{ div } (\vec u\otimes \vec u)&= \Delta\vec u+\vec J\wedge \vec B -\vec\nabla p\\  \partial_t\vec E-\vec\nabla\wedge\vec B&=-\vec J 
\\ \partial_t\vec B+\vec\nabla\wedge \vec E&=0
\\ \text{ div }\vec u= \text{ div }\vec  B&=0\\ \vec J&=\vec E+\vec u\wedge\vec B
\\ \vec u(0,.)=\vec u_0,\quad  \vec E(0,.)=\vec E_0,&\quad\vec B(0,.)=\vec B_0.
\end{split}\right.\end{equation}
Solutions to \eqref{non force eqns} formally enjoy the energy balance
\begin{eqnarray*} 
\label{energy}
\frac12  \frac{d}{dt} \big[\|\vec u\|_{L^2}^2 +\|\vec B\|_{L^2}^2 +\|\vec E\|_{L^2}^2
   \big]   + \|\vec J\|_{L^2}^2  + \|\nabla\vec u \|_{L^2}^2 = 0, 
\end{eqnarray*}
which suggests that weak solutions would exist in the space 
$$
\vec u\in L^\infty(0,\infty; L^2)\cap L^2(0,\infty; \dot H^1);\quad 
\vec E,\; \vec B\in L^\infty(0,\infty; L^2)\quad 
\vec J\in L^2(0,\infty; L^2).
$$
Unfortunately, weak solutions are not known to exist even in two space dimension. In \cite{Masmoudi10jmpa}, Masmoudi proved the existence of  global strong solutions. Later on,  Ibrahim and Keraani \cite{IK} relaxed the regularity condition on the initial data to construct global small solutions \`a la Kato. This result was recently improved by Germain, Ibrahim and Masmoudi \cite{GIM} by taking small initial data $\vec u_0$, $\vec E_0$ and $\vec B_0$ in $\dot H^{1/2}$, and construct a solution $(\vec u,\vec E,\vec B)$ such that $\vec u\in L^\infty\dot H^{1/2}\cap L^2\dot H^{3/2}\cap L^2L^\infty$, $\vec E\in L^\infty\dot H^{1/2}\cap L^2\dot H^{1/2}$ and $\vec B\in L^\infty\dot H^{1/2}$. \\

This paper is organized as follows. In section one, we introduce some useful notation and state our results: A first Theorem about the existence of time-periodic solutions in spaces of Sobolev type with an extra spatial regularity. Then, we relax the hypothesis of the first Theorem and extend it to critical Besov type spaces. A such an extension seems to us necessary in order to prove the last result about the stability of the periodic-in time solutions. Section two is devoted to the proof of the two existence results, and we only give the full details in the case of Sobolev. In section three, we start by examining a maximal regularity result adapted to the spaces that incorporates averaged decay  in time. Then, we show the decay of the electromagnetic field where we use the full spectral properties of the weakly damped Maxwell's equations. Then, we list all the nonlinear estimates that appear in the study of the nonlinear stability, and we only prove the worst two of them when two factors have no decay in time. The manuscript is then finished with an Appendix summarizing the main spectral properties of the weakly damped Maxwell's equations. 

\begin{center}
{\large Acknowledgements}
\end{center}
Slim Ibrahim is partially supported by NSERC Discovery grant \# 371637-2014. Nader Masmoudi is in part supported by NSF grant DMS-1211806\#\\
Slim Ibrahim would like to thank the University of \'Evry and the University of Paris Diderot-Paris 7 for their hospitality to accomplish a part of this work.



\section{Notation}
The well-known Littlewood-Paley decomposition and the corresponding frequency cut-off operators will be of frequent use in this paper. We briefly recall it to define the functional spaces we need.\\
There exists a radial positive  function   $\varphi\in\mathcal{D}(\R^d\backslash{\{0\}})$ such that
\begin{eqnarray*}
\label{lpfond1}
\sum_{q\in\mathbb Z} \varphi (2^{-q}\xi)& =& 1\qquad \forall\, \xi\in\mathbb R^d\setminus\{0\},
\\
\Supp\ \varphi(2^{-q}\cdot)\cap \Supp\ \varphi(2^{-j}\cdot)&=&\emptyset, \qquad \forall\, |q-j|\geq 2.
\end{eqnarray*}
For every  $q\in\mathbb Z$ and $v\in{\mathcal S}'(\R^d)$ we set 
$$
\Delta_qv=\mathcal{F}^{-1} \left[ \varphi(2^{-q}\xi) \hat v (\xi) \right] \quad\hbox{ and  }\;
S_q=\sum_{ j=-\infty}^{ q-1}\Delta_{j}.
$$
Bony's decomposition \cite{Bony81} consists in splitting the product  $uv$ into three parts\footnote{ It should be said that this decomposition is true in the class of distributions for which 
$\sum_{q\in\mathbb Z}\Delta_q=I$.
For example, polynomial functions do not belong to this class.}: 
$$
uv=T_u v+T_v u+R(u,v),
$$
with
$$
T_u v=\sum_{q}S_{q-1}u\Delta_q v,\quad  R(u,v)=\sum_{q}\Delta_qu\tilde\Delta_{q}v  \quad\hbox{and}\quad \tilde\Delta_{q}=\sum_{i=-1}^1\Delta_{q+i}.
$$
For $(p,r)\in[1,+\infty]^2$ and $s\in\R$ we define  the homogeneous Besov \mbox{space $\Dot B_{p,r}^s$} 
as the set of  $u\in\mathcal{S}'(\R^d)$ such that $u = \sum_q \Delta_q u$ and
$$
\|u\|_{\dot B^s_{p,r}} =\Bigl\|\left( 2^{qs} \|\Delta
_qu\|_{L^p}\right)_{q\in \Z}\Bigr\|_{\ell^r(\Z)}<\infty.
$$
In the case $p=r=2$, the space $ \dot B^s_{2,2}$ turns out to be the classical homogeneous Sobolev space $\dot H^s$.\\
In order to prove the our stability result, we need to build spaces that take into account  the different decay rates coming from the coupling of the two types of PDEs (Navier-Stokes, and Maxwell), and also the weak decay of the low frequencies in the Maxwell's equations. In addition, and in order to estimate the nonlinear terms, we need to introduce spaces that capture an average decay in time, and not just pointwise decay. This will be crucial in our analysis. We define the spaces that distinguish between the high and low frequencies of a function 
\begin{definition}
Let $\Delta_q$ denote the dyadic 
frequency localization operator defined in section 1. We define a space that distinguishes between the high and low frequencies of a function as follows. 
For $s_1, s_2\in\R$ and $1\leq p, q_1,q_2\leq \infty$ define the space ${\dot{\mathcal B}}^{s_1,s_2}_{p,q}$ by its norm
$$
 \|\phi\|_{\dot{\mathcal B}^{(s_1,s_2)}_{p,(q_1,q_2)}  }:= 
\big(\sum_{k\leq 0}2^{kqs_1}\|\Delta_k\phi\|_{L^p}^{q_1}\big)^\frac1{q_1}+ 
\big(\sum_{k>0}2^{kqs_2}\|\Delta_k\phi\|_{L^p}^{q_2}\big)^\frac1{q_2}.
 $$
We will also use the short-hands
$$
\mbox{and} \quad \dot B^s_{p,(q_1,q_2)} := 
\dot{\mathcal B}^{(s,s)}_{p,(q_1,q_2)},\quad \dot{H}^s = \dot{\mathcal B}^{(s,s)}_{2,(2,2)}, \quad 
\mbox{and} \quad \dot H^{s,t} := 
\dot{\mathcal B}^{(s,t)}_{2,(2,2)}.
$$ 
Finally, define the space-time functional space $\tilde L^r_T{   {\dot{\mathcal B}}^{(s_1,s_2)}_{p,(q_1,q_2)}}$ by its norm
$$
 \|\phi\|_{  \tilde L^r_T{   {\dot{\mathcal B}}^{(s_1,s_2)}_{p,(q_1,q_2)}   }}:= 
\big(\sum_{k\leq 0}2^{q_1s_1k}\|\Delta_k\phi\|_{L^r_TL^p}^{q_1}\big)^\frac1{q_1}+ 
\big(\sum_{k>0}2^{q_2ks_2}\|\Delta_k\phi\|_{L^r_TL^p}^{q_2}\big)^\frac1{q_2},
$$
with the trivial extension when $r=\infty$. We also define the new spaces that take into account an averaged decay in time. Precisely,  we denote 
\begin{eqnarray*}
\tilde{\sup_{n\in\mathbb N}}\;(n+1)^{\frac{1-\varepsilon}2}\|u\|_{  L^2{(n,n+1);   {\dot{\mathcal B}}^{(s_1,s_2)}_{p,(q_1,q_2)}   }}:&=&
\big(\sum_{k\leq 0}2^{q_1s_1k}\sup_{n\in\mathbb N}(n+1)^{\frac{(1-\varepsilon)q_1}2}\|\Delta_ku\|_{L^2(n,n+1;L^p)}^{q_1}\big)^\frac1{q_1}\\
&+& 
\big(\sum_{k\leq 0}2^{q_2s_2k}\sup_{n\in\mathbb N}(n+1)^{\frac{(1-\varepsilon)q_2}2}\|\Delta_ku\|_{L^2(n,n+1;L^p)}^{q_2}\big)
^\frac1{q_2}.
\end{eqnarray*}
with the obvious generalizations in the cases $q_j=\infty$, or $\tilde L^r_T  {\dot H}^{s}$ etc..
\end{definition}

The space ${\dot{\mathcal B}}^{(s_1,s_2)}_{p,(q_1,q_2)}$ is nothing but the usual 
Besov space ${\dot{\mathcal B}}^{s_2}_{p,q_2}$ for high frequencies while it behaves like 
${\dot{\mathcal B}}^{s_1}_{p,q_1}$ for low frequencies. If $s_1>s_2$, it is not difficult to 
see that ${\dot{\mathcal B}}^{(s_1,s_2)}_{p,(q_1,q_2)}={\dot{\mathcal B}}^{s_1}_{p,q_1}+
{\dot{\mathcal B}}^{s_2}_{p,q_2}$. 
The $\tilde L$ type spaces were first used by Chemin and Lerner \cite{CL95}.

In the sequel, consider a parameter $0<\varepsilon<1$, introduce the spaces $\mathcal X_1$, $\mathcal X_2$, $\mathcal X_3$, 
and their ``dual'' counterparts $\mathcal Y_1$ and $\mathcal Y_2$ by defining their norms.
$$
\|u\|_{\mathcal X_1}:= \tilde{\sup_{t>0}}\;(t+1)^\frac{1-\varepsilon}2
\|u(t)\|_{  {\dot{\mathcal B}}^{(\frac32-\varepsilon,\frac12)}_{2,(\infty,1)}}+
\tilde{\sup_{n\in\mathbb N}}\; (n+1)^{\frac{1-\varepsilon}2}
\|u\|_{L^2(n,n+1;{\dot{\mathcal B}}^{\frac32}_{2,(\infty,1)})}
$$

$$
\|E\|_{\mathcal X_2}:= \tilde{\sup_{t>0}}\; (t+1)^{\frac{1-\varepsilon}2}\|E(t)\|_{  H^\frac12}\sim 
\tilde{\sup_{n\in\mathbb N}}\;(n+1)^{\frac{1-\varepsilon}2} \|E\|_{L^\infty(n,n+1;H^\frac12)}
$$

$$
\|B\|_{\mathcal X_3}:= \tilde{\sup_{t>0}}\; (t+1)^{\frac{1-\varepsilon}2}\|B(t)\|_{ \dot H^{1,\frac12}}\sim 
\tilde{\sup_{n\in\mathbb N}}\;(n+1)^{\frac{1-\varepsilon}2} \|B\|_{L^\infty(n,n+1;\dot H^{1,\frac12})},
$$

and 
$$
\|F\|_{\mathcal Y_1}:=\tilde{\sup_{n\in\mathbb N}}\; (n+1)^{\frac{1-\varepsilon}2}
\|F\|_{L^2(n,n+1;{\dot{\mathcal B}}^{(-\frac12-\varepsilon,-\frac12)}_{2,(\infty,1)})},
$$

$$
\|G\|_{\mathcal Y_2}:=\tilde{\sup_{n\in\mathbb N}}\; (n+1)^{\frac{1-\varepsilon}2}\|G\|_{L^2(n,n+1;H^\frac12)}.
$$
Finally, let $\mathcal X:=(\mathcal X_1\cap\tilde L^\infty(\dot{\mathcal B}^\frac12_{2,(\infty,1))})\times\mathcal X_2\times\mathcal X_3$.

\subsection{Results}
In our first result and under a smallness assumption on the forces, we construct (in Sobolev spaces with an extra $\delta$ regularity) a time-periodic solution $\vec\Gamma_{\rm per}$ to \eqref{eqns}. More precisely, we have

\begin{theorem}\label{theoNSM}$\ $\\
Let $0<\delta<1$. Then there exists a positive constant $\epsilon_{T,\delta}$ such that~:  if the time-periodic forces $\vec F_{\rm per}$, $\vec G_{\rm per}$ and $\vec H_{\rm per}$ satisfy the following assumptions :
\begin{enumerate}
\item $\vec F_{\rm per}$ belongs to $L^2_{\rm per} \dot H^{-\frac{1}{2}} \cap L^2_{\rm per} \dot  H^{-\frac{1}{2}+\delta}$ and
$$  \sqrt{\int_0^T \|\vec F_{\rm per}(t,.)\|_{  \dot H^{-\frac{1}{2}}}^2\, dt}+ \sqrt{\int_0^T \|\vec F_{\rm per}(t,.)\|_{  \dot H^{-\frac{1}{2}+\delta}}^2\, dt}<\epsilon_{T,\delta}$$
\item  the mean value  $\vec F_0=\frac{1}{T}\int_0^T \vec F_{\rm per} (t,.)\, dt$   belongs  to $ \dot B^{-\frac{3}{2}}_{2,\infty}$ and 
$$\|\vec F_0\|_{\dot B^{-3/2}_{2,\infty}}<\epsilon_{T,\delta}$$
\item $\vec G_{\rm per}$ belongs to $  L^2_{\rm per} H^{\frac{1}{2}+\delta}$ and
$$  \sqrt{\int_0^T \|\vec G_{\rm per}(t,.)\|_{    H^{\frac{1}{2}+\delta}}^2\, dt}<\epsilon_{T,\delta}$$
\item the mean value  $\vec G_0=\frac{1}{T}\int_0^T \vec G_{\rm per}(t,.)\, dt$   belongs to $\dot H^{-1}$ and
$$ \|\vec G_0\|_{\dot H^{-1} }<\epsilon_{T,\delta},$$
\item $\vec H_{\rm per}$ is divergence-free ($\text{\rm div }\vec H_{\rm per}=0$)  and  $\vec H_{\rm per}$ belongs to $  L^2_{\rm per} H^{\frac{1}{2}+\delta}$ with
$$  \sqrt{\int_0^T \|\vec H_{\rm per}(t,.)\|_{    H^{\frac{1}{2}+\delta}}^2\, dt}<\epsilon_{T,\delta}$$
\item the mean value    $\vec H_0=\frac{1}{T}\int_0^T \vec H_{\rm per}(t,.)\, dt$ belongs to $\dot H^{-2}$ and
$$  \|\vec H_0\|_{\dot H^{-2}}<\epsilon_{T,\delta},$$\end{enumerate}
 then the Navier--Stokes--Maxwell problem (\ref{eqns}) has a time-periodic solution $(\vec u_{\rm per}, \vec E_{\rm per},\vec B_{\rm per})$ such that :
 \begin{itemize}
 \item $\vec u_{\rm per}$ belongs to $L^\infty_{\rm per} \dot B^{\frac{1}{2}}_{2,\infty}\cap L^2_{\rm per} \dot  H^{\frac{3}{2}+\delta}$
 \item $\vec E_{\rm per}$ and $\vec B_{\rm per}$ belong  to $  L^\infty_{\rm per} H^{\frac{1}{2}+\delta}$.
 \end{itemize}
\end{theorem}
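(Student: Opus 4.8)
The plan is to obtain $\vec\Gamma_{\rm per}=(\vec u_{\rm per},\vec E_{\rm per},\vec B_{\rm per})$ as the fixed point of a contraction on a small ball of
$$
\mathbb E:=\Big(L^\infty_{\rm per}\dot B^{1/2}_{2,\infty}\cap L^2_{\rm per}\dot H^{3/2+\delta}\Big)\times\Big(L^\infty_{\rm per}H^{1/2+\delta}\Big)\times\Big(L^\infty_{\rm per}H^{1/2+\delta}\Big).
$$
Given $(\vec u,\vec E,\vec B)\in\mathbb E$, set $\vec J=\sigma(\vec E+\vec u\wedge\vec B)$ and let $\Phi(\vec u,\vec E,\vec B)=(\vec u^\sharp,\vec E^\sharp,\vec B^\sharp)$ be the unique $T$-periodic solution (with $\vec u^\sharp,\vec B^\sharp$ divergence free) of the \emph{linear} system obtained by freezing the quadratic terms:
$$
\partial_t\vec u^\sharp-\nu\Delta\vec u^\sharp=\mathbb P\big(\vec F_{\rm per}+\vec J\wedge\vec B-\text{div}(\vec u\otimes\vec u)\big),
$$
$$
\partial_t\vec E^\sharp-\vec\nabla\wedge\vec B^\sharp+\sigma\vec E^\sharp=\vec G_{\rm per}-\sigma\,\vec u\wedge\vec B,\qquad
\partial_t\vec B^\sharp+\vec\nabla\wedge\vec E^\sharp=\vec H_{\rm per},
$$
$\mathbb P$ being the Leray projector. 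A fixed point of $\Phi$ is exactly a $T$-periodic solution of \eqref{eqns} with the announced regularity, so it suffices to show that, for $\epsilon_{T,\delta}$ small, $\Phi$ maps a ball of radius $\rho\sim\epsilon_{T,\delta}$ of $\mathbb E$ into itself and is there a contraction. This reduces to linear estimates for the periodic resolvents of the two blocks together with bilinear estimates for the source terms.

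For the linear step I would split each forcing (and each quadratic term) into its time-average over $[0,T]$ and its zero-time-mean remainder. On the Fourier side in $(\xi,\omega)$ with $\omega\in\frac{2\pi}{T}\mathbb Z$, the symbol of $\partial_t-\nu\Delta$ is $i\omega+\nu|\xi|^2$, of modulus $\gtrsim|\omega|+\nu|\xi|^2$ as soon as $\omega\neq0$: hence the zero-time-mean part of $\vec u^\sharp$ enjoys the full heat smoothing/maximal-regularity gain (Duhamel against an $L^1_{\rm per}\supset L^2_{\rm per}$ forcing, plus the two-space-derivative gain on $|\xi|\gtrsim1$) with \emph{no} low-frequency resonance, while the time-average part solves the Stokes-type elliptic equation $-\nu\Delta\vec u^\sharp_0=\mathbb P(\cdots)_0$ and is controlled through $(-\Delta)^{-1}\colon\dot B^{-3/2}_{2,\infty}\to\dot B^{1/2}_{2,\infty}$ at low frequencies and $(-\Delta)^{-1}\colon\dot H^{-1/2+\delta}\to\dot H^{3/2+\delta}$ at high frequencies. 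This is exactly why hypothesis (1) requires $\vec F_{\rm per}\in L^2_{\rm per}(\dot H^{-1/2}\cap\dot H^{-1/2+\delta})$ (the $\dot H^{-1/2}$ part producing the $L^\infty_{\rm per}\dot B^{1/2}_{2,\infty}$ bound, the $\dot H^{-1/2+\delta}$ part the $L^2_{\rm per}\dot H^{3/2+\delta}$ bound) \emph{and} hypothesis (2) requires in addition $\vec F_0\in\dot B^{-3/2}_{2,\infty}$, which the former cannot replace since it does not capture the low-frequency bulk of $\vec u^\sharp_0$. For the Maxwell block I would instead use the spectral picture of the weakly damped operator recalled in the Appendix: on divergence-free fields the eigenvalues behave like $-c|\xi|^2$ and $-c'$ for $|\xi|\ll1$ and like $\pm i|\xi|-\sigma/2$ for $|\xi|\gg1$, with uniform exponential decay of the semigroup at each frequency (the gradient part of $\vec E$ solves the trivial damped ODE $\partial_t+\sigma$); so again the zero-time-mean part of $(\vec E^\sharp,\vec B^\sharp)$ has a bounded, resonance-free periodic resolvent (Duhamel against an $L^1_{\rm per}$ forcing, no smoothing needed), while the time-average solves the elliptic system
$$
\sigma\bar{\vec E}=\vec\nabla\wedge\bar{\vec B}+\vec G_0-\sigma\,\overline{\vec u\wedge\vec B},\qquad
-\Delta\bar{\vec B}=\sigma\vec H_0-\vec\nabla\wedge\vec G_0+\sigma\,\vec\nabla\wedge\overline{\vec u\wedge\vec B},
$$
for which $(-\Delta)^{-1}\colon\dot H^{-2}\to\dot H^{0}$ and $(-\Delta)^{-1}\vec\nabla\colon\dot H^{-1}\to\dot H^{0}$ are precisely what is needed, accounting for hypotheses (4) and (6); the non-critical $\delta$-regularity in hypotheses (3) and (5) is forced by the total absence of smoothing in the Maxwell block. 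Altogether this gives
$$
\|\vec u^\sharp\|_{L^\infty_{\rm per}\dot B^{1/2}_{2,\infty}\cap L^2_{\rm per}\dot H^{3/2+\delta}}\lesssim\|f\|_{L^2_{\rm per}(\dot H^{-1/2}\cap\dot H^{-1/2+\delta})}+\|f_0\|_{\dot B^{-3/2}_{2,\infty}},
$$
$$
\|(\vec E^\sharp,\vec B^\sharp)\|_{(L^\infty_{\rm per}H^{1/2+\delta})^2}\lesssim\|(g,h)\|_{L^2_{\rm per}H^{1/2+\delta}}+\|g_0\|_{\dot H^{-1}}+\|h_0\|_{\dot H^{-2}},
$$
with constants depending on $T,\nu,\sigma,\delta$.

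It then remains to bound the sources in those norms by $\|(\vec u,\vec E,\vec B)\|_{\mathbb E}$. Writing $\vec J\wedge\vec B=\sigma\,\vec E\wedge\vec B+\sigma\,(\vec u\wedge\vec B)\wedge\vec B$, one must estimate $\text{div}(\vec u\otimes\vec u)$, $\vec E\wedge\vec B$ and $(\vec u\wedge\vec B)\wedge\vec B$ in $L^2_{\rm per}\dot H^{-1/2+\delta}$ with their time-means in $\dot B^{-3/2}_{2,\infty}$, and $\vec u\wedge\vec B$ in $L^2_{\rm per}H^{1/2+\delta}$ with time-mean in $\dot H^{-1}$. These all follow from the usual product laws (Bony decomposition) in Sobolev and Besov spaces on $\R^3$, using crucially that all fields are $T$-periodic so $L^\infty_{\rm per}\hookrightarrow L^2_{\rm per}$ and the $L^\infty_t$-control of $\vec E,\vec B$ suffices for the quadratic Maxwell terms. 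The extra regularity $\delta$ enters precisely here: it is what allows $\text{div}(\vec u\otimes\vec u)$ to land in $\dot H^{-1/2+\delta}$ rather than merely $\dot H^{-1/2}$, and it makes $\dot H^{3/2+\delta}$ behave as an algebra-type factor in the product $\dot H^{3/2+\delta}\cdot H^{1/2+\delta}\hookrightarrow H^{1/2+\delta}$ needed for $\vec u\wedge\vec B$. The same bilinear bounds applied to $\Phi(\vec\Gamma)-\Phi(\vec\Gamma')$ (the linear forcing cancels) produce a Lipschitz constant $\lesssim\rho$, hence a genuine contraction once $\epsilon_{T,\delta}$ is small; the fixed point is $T$-periodic and lies in $\mathbb E$ by construction, which is the assertion of the Theorem.

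I expect the linear step to be the main obstacle. One must first isolate correctly the resonance at $(\xi,\omega)=(0,0)$: the temporal oscillation removes the low-frequency singularity of both the parabolic and the weakly-damped resolvents, so that \emph{only} the time-means of the forces require the negative-regularity hypotheses (2), (4), (6), the oscillatory parts being handled by resonance-free bounds. Second, the Maxwell block carries no parabolic smoothing whatsoever and only a weak, frequency-dependent damping; reconciling its behaviour — heat-like ($e^{-c|\xi|^2t}$) at low frequencies, hyperbolic with constant damping at high frequencies — with the genuinely parabolic scaling of the Navier--Stokes block, and checking that $\mathbb E$ is stable under the coupling $\vec J\wedge\vec B$ / Ohm's law, is the delicate point, and it is what dictates the particular choice of the spaces $\dot B^{1/2}_{2,\infty}$, $H^{1/2+\delta}$ and of the mean-value spaces appearing in the statement.
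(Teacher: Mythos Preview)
Your proposal is correct and follows essentially the same approach as the paper: a Picard/fixed-point scheme on the space $\mathbb E$, with the linear step handled by time-Fourier expansion à la Kyed, splitting each force into its time-mean (treated via the elliptic equations $-\Delta\vec U_0=\vec F_0$, etc., which explains hypotheses (2), (4), (6)) and its zero-mean fluctuation (treated via the resonance-free symbols $(i\omega+|\xi|^2)^{-1}$ and $(|\xi|^2-\omega^2+i\omega)^{-1}$), and the nonlinear step closed by the product laws you list. The one point the paper works out carefully and that you should not underestimate is the uniform-in-$\xi$ summability of the Maxwell multiplier, i.e.\ $\sup_{t\ge0}\sum_{k\ge1}\frac{t+k^2}{k^2+(t-k^2)^2}<\infty$: the near-resonance $|\xi|^2\approx(2\pi k/T)^2$ must be handled explicitly, and this is the actual content of the ``bounded, resonance-free periodic resolvent'' claim for the Maxwell block.
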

We extend the above statement to solutions in critical spaces of Besov-type. This will be crucial for the stability as we were not able to prove the stability in the spaces given by Theorem \ref{theoNSM}. More precisely, we have

\begin{theorem}
\label{MAIN1} 

$\;$

Let $T>0$ denote the time period of three periodic forces 
$\vec F_{\text{per}}$, $\vec G_{\text{per}}$ and $\vec H_{\text{per}}$ decomposed as follows into a fluctuating and zero mean parts:

$\vec F_{\text{per}}(t,x):=\vec F_0(x)+\vec F_f(t,x)$, $\vec G_{\text{per}}:=\vec G_0(x)+\vec G_f(t,x)$ and 
$\vec H_{\text{per}}:=\vec H_0(x)+\vec H_f(t,x)$ with
$$
\int_0^T\vec F_f\;dt=\int_0^T\vec G_f\;dt=\int_0^T\vec H_f\;dt=0.
$$
There exists  $\varepsilon_T>0$ such that under the following smallness assumptions
$$
\|\vec F_{\text{per}}\|_{\tilde L^2(0,T;{\dot{\mathcal B}}^{-\frac12}_{2,(\infty,1)})}+
\|F_0\|_{{\dot{\mathcal B}}^{-\frac32}_{2,(\infty,1)}}\leq\varepsilon_T
$$

$$
\|\vec G_{\text{per}}\|_{L^2(0,T;H^{\frac12})}+\|\vec G_0\|_{\dot H^{-1}}\leq\varepsilon_T
$$

and 
$$
\|\vec H_{\text{per}}\|_{L^2(0,T;H^{\frac12})}+\|\vec H_0\|_{\dot H^{-2}}\leq\varepsilon_T,
$$
a unique mild solution $\vec\Gamma_{\text{per}}=(\vec u_{\text{per}},\vec E_{\text{per}},\vec B_{\text{per}})$ 
of \eqref{eqns} exists such that $\vec u_{\text{per}}\in \tilde L^\infty_{\text{per}}\dot{\mathcal B}^\frac12_{2,(\infty,1)}\cap \tilde L^2_{\text{per}}\dot{\mathcal B}^\frac32_{2,(\infty,1)}$ 
and  $\vec E_{\text{per}}, \vec B_{\text{per}}\in\tilde L^\infty_{\text{per}}{ H}^\frac12$.
\end{theorem}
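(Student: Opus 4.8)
The strategy is the classical fixed-point (contraction/Banach) scheme for mild solutions, now carried out in the Besov-type spaces $\dot{\mathcal B}^{(s_1,s_2)}_{p,(q_1,q_2)}$ that separate high and low frequencies, and in their time-periodic variants $\tilde L^r_{\rm per}$. The plan is to write \eqref{eqns} in Duhamel form, splitting the solution according to the three linear semigroups involved: the heat semigroup $e^{t\nu\Delta}$ for $\vec u$, and the (weakly damped) Maxwell semigroup for $(\vec E,\vec B)$. Because we seek a \emph{time-periodic} solution rather than a Cauchy solution, the linear part is inverted by the usual periodic resolvent: for a forcing $f$ of period $T$ one sets $\mathcal L f(t):=(\mathrm{Id}-e^{TA})^{-1}\int_{t-T}^t e^{(t-s)A}f(s)\,ds$, which is well defined provided $1$ is not in the spectrum of $e^{TA}$ — true here since the heat semigroup is a strict contraction on each nonzero dyadic block and, thanks to the spectral analysis of the weakly damped Maxwell system recalled in the Appendix, the Maxwell semigroup decays on every frequency shell, with the slow (low-frequency) rate being exactly what forces the $s_1$-index and the low-frequency weights in $\mathcal X$. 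The mean-value assumptions on $\vec F_0,\vec G_0,\vec H_0$ (in $\dot{\mathcal B}^{-3/2}_{2,(\infty,1)}$, $\dot H^{-1}$, $\dot H^{-2}$ respectively) handle precisely the zero-frequency/zero-mean obstruction: the steady part of the solution solves a stationary Navier--Stokes--Maxwell system, which is solved separately (a stationary small-data fixed point), while the fluctuating, zero-mean part is genuinely invertible by $\mathcal L$.

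The concrete steps, in order, are the following. First, I would establish the linear estimates: that $\mathcal L$ (heat part) maps $\tilde L^2_{\rm per}\dot{\mathcal B}^{-1/2}_{2,(\infty,1)}$ into $\tilde L^\infty_{\rm per}\dot{\mathcal B}^{1/2}_{2,(\infty,1)}\cap\tilde L^2_{\rm per}\dot{\mathcal B}^{3/2}_{2,(\infty,1)}$ (maximal-regularity type bounds, obtained block by block by summing the geometric series $\sum_{m\ge0}e^{-m c 2^{2k}T}$ against the Duhamel integral, then taking the $\ell^1$/$\ell^\infty$ norms in $k$ with the high/low split), and that the Maxwell part of $\mathcal L$ maps $L^2_{\rm per}H^{1/2}$ into $\tilde L^\infty_{\rm per}H^{1/2}$, using the Appendix's spectral decomposition of the $3\times3$ symbol of the weakly damped Maxwell operator (eigenvalues behaving like $-\sigma$ and like $-|\xi|^2/\sigma$ for small $|\xi|$, and purely oscillatory-with-damping $-\sigma/2\pm i|\xi|$ for large $|\xi|$). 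Second, I would collect the nonlinear (bilinear/trilinear) estimates for the three nonlinearities $\mathrm{div}(\vec u\otimes\vec u)$, $\vec J\wedge\vec B$ with $\vec J=\sigma(\vec E+\vec u\wedge\vec B)$, and the coupling terms $\vec u\wedge\vec B$ appearing in the Maxwell block — using Bony's paraproduct decomposition and the product rules in $\dot{\mathcal B}^{(s_1,s_2)}_{2,(q_1,q_2)}$ (which reduce, away from frequency $1$, to the standard Besov product laws, so $\dot{\mathcal B}^{1/2}_{2,(\infty,1)}$ plays the role of the critical algebra-like space in dimension $3$). Third, with $\Phi$ the map sending $\vec\Gamma$ to $\mathcal L$ applied to (forcing $+$ nonlinearity$(\vec\Gamma)$), I would show $\Phi$ maps a small ball of the periodic space $\tilde L^\infty_{\rm per}\dot{\mathcal B}^{1/2}_{2,(\infty,1)}\cap\tilde L^2_{\rm per}\dot{\mathcal B}^{3/2}_{2,(\infty,1)}\times(\tilde L^\infty_{\rm per}H^{1/2})^2$ into itself and is a contraction there, under the stated smallness of $\varepsilon_T$; uniqueness follows from the contraction, and periodicity is automatic because $\mathcal L$ of a $T$-periodic input is $T$-periodic. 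One subtlety to flag: the $T$ in $\varepsilon_T$ is genuine — the linear bounds for $\mathcal L$ involve factors like $(1-e^{-cT})^{-1}$ which blow up as $T\to0$, so the smallness threshold degenerates for short periods; this is acceptable for the theorem as stated, and can be recovered for stability purposes (the next section) by the decay estimates rather than by uniform-in-$T$ control.

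The main obstacle is the same one the introduction advertises: the Maxwell block has \emph{no} smoothing, so unlike pure Navier--Stokes one cannot gain derivatives on $\vec E,\vec B$ from the linear flow, and the nonlinearity $\vec J\wedge\vec B$ fed into the Navier--Stokes equation (and $\vec u\wedge\vec B$ fed into Maxwell) must be closed using only the $H^{1/2}$ regularity available on $(\vec E,\vec B)$ together with the parabolic gain on $\vec u$. Getting $\vec u\wedge\vec B\in L^2_{\rm per}H^{1/2}$ from $\vec u\in\tilde L^2_{\rm per}\dot{\mathcal B}^{3/2}_{2,(\infty,1)}$ and $\vec B\in\tilde L^\infty_{\rm per}H^{1/2}$ is tight in $3$D and is exactly why the anisotropic high/low spaces with the $(\infty,1)$ summation are needed — the $\ell^1$ in high frequencies buys the endpoint product estimate, while the $\ell^\infty$ low-frequency slot (and the extra negative index $s_1$, here $-3/2$ on $\vec F_0$, $-1$ on $\vec G_0$, $-2$ on $\vec H_0$) absorbs the weak low-frequency decay of Maxwell. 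Verifying that these product laws actually close — in particular that the low-frequency part of $\vec J\wedge\vec B$ lands in $\dot{\mathcal B}^{-1/2}_{2,\infty}$ near frequency zero so that $\mathcal L$ sends it back to $\dot{\mathcal B}^{1/2}_{2,\infty}$, and that the troublesome terms where \emph{both} factors carry no time decay still integrate in $t$ over a period — is the delicate heart of the argument; it reduces to the two "worst" nonlinear estimates that Section~3 promises to prove in full.
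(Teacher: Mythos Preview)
Your proposal is correct in spirit and would close, but the paper takes a different route for the linear periodic solve. Rather than the semigroup/periodic-resolvent formula $(\mathrm{Id}-e^{TA})^{-1}\int_{t-T}^t e^{(t-s)A}f(s)\,ds$ that you describe, the paper follows Kyed and expands in \emph{time Fourier series}: writing $\vec F=\sum_{k\in\mathbb Z}\vec F_k(x)e^{2\pi ikt/T}$ (and likewise for $\vec u,\vec E,\vec B$), the linear system \eqref{syst2} becomes algebraic mode by mode, e.g.\ $\hat U_k(\xi)=(|\xi|^2+2\pi ik/T)^{-1}\hat F_k(\xi)$ for the heat part and $\hat B_k(\xi)=(|\xi|^2-4\pi^2k^2/T^2+2\pi ik/T)^{-1}(\cdots)$ for Maxwell. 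The technical heart is then bounding frequency-uniform sums such as $\sup_{t>0}\sum_{k\ge1}(t+k^2)/\bigl(k^2+(t-k^2)^2\bigr)$, done by elementary case-splitting. The $k=0$ mode is handled at each Picard step by the stationary \emph{linear} inverse ($\vec U_0=-\Delta^{-1}\vec F_0$, $\vec B_0=\Delta^{-1}\nabla\wedge\vec G_0-\Delta^{-1}\vec H_0$, etc.); there is no separate nonlinear stationary fixed point as you suggest, and the mean-value hypotheses on $\vec F_0,\vec G_0,\vec H_0$ are precisely what place these inverses in the target spaces.

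The two approaches are essentially dual (Poisson summation relates your geometric series $\sum_{m\ge0}e^{-mc2^{2k}T}$ to the paper's multiplier sums over $k$) and yield the same $T$-dependent constants. The Fourier-series method is a bit more direct here because the Maxwell multiplier is explicit and one never needs to invoke the semigroup spectral decomposition from the Appendix --- the paper reserves that machinery for the decay estimates in the stability proof. On the nonlinear side your plan matches the paper exactly: the product rules in $\dot{\mathcal B}^{s}_{2,(\infty,1)}$ via Bony's paraproduct are the content of Lemma~\ref{law prod lem Bes}, and the fixed point (equivalently, Picard iteration) closes as you say.
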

Another variant of the existence result of time periodic solutions is given by the following theorem where we require a slightly better control of the high frequencies of the solution $\vec\Gamma$.

\begin{rem}
\label{rem1}
\begin{itemize}
\item
One can prove local existence if the low frequency part of the initial data of the velocity field is in 
$\dot{\mathcal B}^\frac12_{2,(\infty,1)}$  and its high frequency is in $\dot H^\frac12$.
\item Compared to the results of  \cite{GIM} in the absence of forcing terms, the statement of Theorem \ref{theoNSM} requires a slightly better control of high frequencies for $\vec u$, $\vec E$ and $\vec B$, a better control of low frequencies of $\vec E$ and $\vec B$, and a weaker control on the low frequencies of $\vec u$.
\item Our proof also shows that the more regular is the forcing, the more regular will be its corresponding  periodic-in time solution. Indeed, if for example the forcing is small in 
$$
\vec F_{\rm per}\in L^2_{\rm per} \dot H^{-\frac{1}{2}}\cap L^2_{\rm per} \dot  H^{\frac{1}{2}}, \quad \vec G_{\rm per}\in L^2_{\rm per} H^{\frac{3}{2}},\quad
 \vec H_{\rm per}\in  L^2_{\rm per} H^{\frac{3}{2}},
 $$
then, we have\\
$\bullet$ $\vec u_{\rm per}$ belongs to $L^\infty_{\rm per} \dot B^{\frac{1}{2}}_{2,\infty} \cap L^2_{\rm per}( \dot  H^{\frac{3}{2}}\cap \dot  H^{\frac{5}{2}})$\\
 $\bullet$ $\vec E_{\rm per}$ and $\vec B_{\rm per}$ belong  to $  L^\infty_{\rm per} H^{\frac{3}{2}}$.
\end{itemize}
\end{rem}
Next, we study the stability of the time-periodic solutions: what happens when, at some time $t_0$, one takes a perturbation of the solutions constructed in above: 
$$
\vec \Gamma(t_0)=\vec \Gamma_{\rm per}(t_0)+\vec \Gamma_{\rm err},
$$ 
with $\vec \Gamma_{\rm err}$ small in $\dot B^{1/2}_{2,(\infty,1)}\times H^{1/2}\times H^{1/2}$?
Do we have a global solution of \eqref{eqns} on $[t_0,+\infty)$ and does the error go to $0$ in suitable norms when $t$ goes to $+\infty$? 
The main problem rises when we estimate the cross terms coming from the interactions between the periodic solution and the solution we want to construct. The worst interaction is given by a 
term of the type 
\begin{eqnarray}
\label{bad term}
(\vec U_{\rm per}\wedge \vec B_{\rm per})\wedge \vec B,
\end{eqnarray}
first because of the non-decay of $U_{\rm per}$, and $B_{\rm per}$, and second because we barley miss an $L^\infty(L^\infty)$ estimate on $U_{\rm per}$. To overcome such a problem, we  impose a strong condition on the low frequencies of the velocity field. In doing so, we are obliged to allow an-$\varepsilon$ loss in the time decay rate. It is important to notice that because of this problem, we were not able to show the stability of the  the solutions given by Theorem \ref{theoNSM}. Hence, our  second main result is the following.

\begin{theorem}
\label{MAIN2} 
Given three $T$-periodic forces $\vec F_{\text{per}}(t)$, $\vec G_{\text{per}}(t)$ and $\vec H_{\text{per}}(t)$ 
satisfying the hypothesis of Theorem \ref{MAIN1}, and denote by $\vec \Gamma_{\text{per}}(t)$ the corresponding small $T$-periodic solution.
Consider an initial data $\vec \Gamma^0_{\text{err}}+\vec\Gamma_{\text{per}}(0)$ with $\vec \Gamma^0_{\text{err}}$ small in 
$\dot{\mathcal B}^\frac12_{2,(\infty,1)}\times H^\frac12\times H^\frac12$, there exists $\vec{\bar\Gamma}$ 
a global solution of \eqref{eqns} with that initial data $\vec\Gamma^0_{\text{err}}+\vec\Gamma_{\text{per}}(0)$. Moreover, we have 
$$
\vec{\bar\Gamma}-\vec\Gamma_{\text{per}}\in \mathcal X,
$$
so that $\vec{\bar\Gamma}$ converges asymptotically to $\vec\Gamma_{\text{per}}$ as $t$ goes to infinity.
\end{theorem}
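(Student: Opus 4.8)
The plan is to write the equation satisfied by the error $\vec\Gamma_{\rm err}:=\vec{\bar\Gamma}-\vec\Gamma_{\rm per}=(\vec w,\vec e,\vec b)$ and to solve it by a Picard fixed-point argument in the space $\mathcal X$. Substituting $\vec{\bar\Gamma}=\vec\Gamma_{\rm per}+\vec\Gamma_{\rm err}$ into \eqref{eqns} and subtracting the system solved by $\vec\Gamma_{\rm per}$, the three periodic forces cancel and one is left with a Navier--Stokes--Maxwell system \emph{without} external force, with initial datum $\vec\Gamma^0_{\rm err}$ and with a right-hand side $\vec N(\vec\Gamma_{\rm err})$ collecting (i) the genuine self-interactions of $\vec\Gamma_{\rm err}$ (the Leray-projected $\div(\vec w\otimes\vec w)$, the Lorentz contribution $(\vec e+\vec w\wedge\vec b)\wedge\vec b$ and the Ohm correction $\vec w\wedge\vec b$) and (ii) the cross terms, bilinear or trilinear in $\vec\Gamma_{\rm per}$ and $\vec\Gamma_{\rm err}$, obtained by expanding $\div(\vec U\otimes\vec U)$, $(\vec E+\vec U\wedge\vec B)\wedge\vec B$ and $\vec U\wedge\vec B$ about $\vec\Gamma_{\rm per}$. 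One then recasts this in Duhamel form, $\vec\Gamma_{\rm err}=\Phi(t)\vec\Gamma^0_{\rm err}+\mathcal L\big(\vec N(\vec\Gamma_{\rm err})\big)$, with $\Phi$ the free evolution (heat semigroup for $\vec w$, weakly damped Maxwell propagator for $(\vec e,\vec b)$) and $\mathcal L$ the associated Duhamel operator, and seeks a fixed point in a small ball of $\mathcal X$.

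The first block of work is linear: one shows that $\Phi$ maps the data space $\dot{\mathcal B}^{1/2}_{2,(\infty,1)}\times H^{1/2}\times H^{1/2}$ into $\mathcal X$ and that $\mathcal L$ maps the ``source'' spaces $\mathcal Y_1\times\mathcal Y_2\times\mathcal Y_2$ into $\mathcal X$. For the velocity this is a maximal-regularity statement for the heat flow in the $\tilde L^2$-in-time spaces weighted by $(n+1)^{(1-\varepsilon)/2}$ over the unit slabs $(n,n+1)$: the weights survive the Duhamel integral because the heat kernel contracts while the weight grows only polynomially --- this is the ``maximal regularity adapted to averaged decay'' announced in the introduction --- and it is carried out separately on low and on high frequencies, whence the two exponents $3/2-\varepsilon$, $1/2$ and the summability pair $(\infty,1)$ in $\mathcal X_1$. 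For $(\vec e,\vec b)$ one invokes the full spectral description of the weakly damped Maxwell operator from the Appendix: the dispersion relation yields parabolic-type decay at low frequencies --- responsible for the anisotropic regularity $\dot H^{1,1/2}$ of $\vec B$, for the gain of one derivative at low frequencies, and for the conditions $\vec G_0\in\dot H^{-1}$, $\vec H_0\in\dot H^{-2}$ inherited from Theorem \ref{MAIN1} --- whereas the damping only produces an $L^2$-in-time (not a smoothing) control at high frequencies, consistent with $\vec E,\vec B$ sitting in $H^{1/2}$ with no regularity gain there. All of this I would do block by block and then reassemble the $\dot{\mathcal B}^{(s_1,s_2)}_{2,(q_1,q_2)}$ norms.

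The second block is the catalogue of nonlinear estimates: $\vec N$ must map $\mathcal X\times\mathcal X$, and $\mathcal X\times(\text{the periodic space of Theorem \ref{MAIN1}})$ for the cross terms, boundedly into $\mathcal Y_1\times\mathcal Y_2\times\mathcal Y_2$ with a constant that is small once $\varepsilon_T$ and $\|\vec\Gamma^0_{\rm err}\|$ are small. The self-interaction terms are favourable --- two decaying factors give a weight $(n+1)^{-(1-\varepsilon)}$ which, even after one time integration, beats $(n+1)^{-(1-\varepsilon)/2}$ --- so Bony's paraproduct decomposition together with the product laws for the $\dot{\mathcal B}^{(s_1,s_2)}_{2,(q_1,q_2)}$ spaces (splitting low/high regimes) closes them with room to spare. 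The cross terms are delicate, since only one factor decays and one can only hope to reproduce the \emph{same} weight; here one exploits the extra regularity built into $\mathcal X_1$ (the low-frequency norm in $\dot{\mathcal B}^{3/2-\varepsilon}_{2,\infty}$, which --- thanks to the $\varepsilon$ --- sums to an $L^\infty_x$-type bound, and the averaged $\tilde L^2(\dot{\mathcal B}^{3/2}_{2,(\infty,1)})$ control) together with $\mathcal X_3$ to absorb the otherwise-missing endpoint integrability.

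The main obstacle, flagged in the introduction, is the trilinear cross term $(\vec U_{\rm per}\wedge\vec B_{\rm per})\wedge\vec b$ (and its siblings with the error in another slot): two of the three factors carry no time decay at all, and one ``barely misses'' an $L^\infty_t L^\infty_x$ bound on $\vec U_{\rm per}$, so a plain H\"older-in-time/Bernstein-in-space argument loses by an endpoint. The remedy is exactly the pair of compromises made in the statement: pay with an $\varepsilon$ in the decay rate, which is what makes the strong low-frequency velocity norm $\dot{\mathcal B}^{3/2-\varepsilon}_{2,\infty}$ give $L^\infty_x$ control via a convergent low-frequency sum; and use the averaged-in-time $L^2(n,n+1)$ norms, which --- by periodicity and Theorem \ref{MAIN1} --- do control $\vec U_{\rm per}$ in $L^2_{t,\mathrm{loc}}L^\infty_x$ even though $\vec U_{\rm per}$ is not pointwise in $L^\infty_x$, turning the missing uniform-in-time bound into a summable-in-$n$ one, with $\vec b\in\mathcal X_3$ the sole decaying factor carrying its weight $(n+1)^{-(1-\varepsilon)/2}$. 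I expect this estimate, plus the bookkeeping of which regime (low vs.\ high output frequency, one vs.\ two decaying factors, which factor is the small one) each nonlinear contribution falls in, to be the crux. Once it is in place, $\vec\Gamma_{\rm err}\mapsto\Phi\vec\Gamma^0_{\rm err}+\mathcal L(\vec N(\vec\Gamma_{\rm err}))$ is a contraction on a small ball of $\mathcal X$ for $\varepsilon_T$ and $\|\vec\Gamma^0_{\rm err}\|$ small, its fixed point is the global solution $\vec{\bar\Gamma}$, uniqueness in $\mathcal X$ follows from the same bilinear bounds, and $\vec{\bar\Gamma}-\vec\Gamma_{\rm per}\in\mathcal X$ forces in particular the error to tend to $0$ in the norms encoded by $\mathcal X$, i.e.\ the claimed asymptotic stability.
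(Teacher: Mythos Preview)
Your proposal is correct and follows essentially the same route as the paper: the error equation is recast in Duhamel form, a contraction is run in $\mathcal X$, and the three ingredients are exactly the paper's Lemma~\ref{ghr} (adapted parabolic maximal regularity), Lemma~\ref{pr1} (decay for the damped Maxwell system via the spectral representation of the Appendix), and the catalogue of product estimates in Lemmas~\ref{lem-prod1}--\ref{lem-prod3}, with the trilinear cross terms $(\vec U_{\rm per}\wedge\vec B_{\rm per})\wedge\vec b$ and $(\vec u\wedge\vec B_{\rm per})\wedge\vec B_{\rm per}$ singled out as the worst ones and handled precisely by the $\varepsilon$-loss and the averaged $L^2(n,n+1)$ norms you describe. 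Two cosmetic points: the paper splits $\vec\Gamma_{\rm err}=e^{t\mathcal A}\vec\Gamma^0_{\rm err}+\vec\Gamma$ and runs the fixed point on $\vec\Gamma$ rather than on $\vec\Gamma_{\rm err}$ itself (an equivalent formulation), and since $\mathcal N_3=0$ the source space is really $\mathcal Y_1\times\mathcal Y_2\times\{0\}$, not $\mathcal Y_1\times\mathcal Y_2\times\mathcal Y_2$.
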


Our proof of Theorem \ref{theoNSM} relies on several linear and nonlinear estimates (product rules in Besov and Sobolev spaces) and uses Fourier series expansion of the time-periodic solution. Such an expansion was used in \cite{Kyed} for the time-periodic forced Navier-Stokes. The proof of Theorem \ref{MAIN2} then goes through a fixed point argument in a suitable space. In order to have the asymptotic convergence, the functional space has to include decay properties. Thus, we are required to exhibit the decay from the velocity and the electro-magnetic fields. Both the dissipation coming from the viscosity of the fluid and from the resistivity in Ohm's law, enable us to have some decay for the velocity $\vec u$ and the electric field $\vec E$. To qualitatively transfer such a decay to the magnetic field is not as easy and clear as for $\vec u$ and $\vec E$. In \cite{IK}, and then \cite{GIM}, a weak decay of the magnetic field was proven in both space dimension two and three. However, the decay was not used (in three space dimension) to construct global small solution. In the contrary, here the use of the decay is an essential fact to show asymptotic convergence.

 \section{Construction of time-periodic solutions}
The purpose of this section is to prove Theorem \ref{MAIN1} and Theorem \ref{theoNSM}. Since the method is the same for both but details are much more involved in critical spaces (the Besov case), and for the sake of simplicity, we opted to give the full details of the proof of Theorem \ref{theoNSM} and only sketch the necessary changes to complete the proof of Theorem  \ref{MAIN1}. First, we introduce some useful notation

$\ $

\noindent{\bf Notation : } \\
For $0<\delta<1$, we shall write 
\begin{itemize}
\item $(\vec u,\vec E,\vec B)\in \mathbb{X} $ if   \begin{enumerate}
 \item $\vec u $ belongs to $L^\infty_{\rm per} \dot B^{\frac{1}{2}}_{2,\infty}\cap L^2_{\rm per} \dot  H^{\frac{3}{2}+\delta}$
 \item $\vec E $ and $\vec B $ belong  to $  L^\infty_{\rm per} H^{\frac{1}{2}+\delta}$.
 \end{enumerate}
\item $(\vec F,\vec G,\vec H)\in \mathbb{Y}$ if 
\begin{enumerate}
\item $\vec F $ belongs to $L^2_{\rm per} \dot H^{-\frac{1}{2}} \cap L^2_{\rm per} \dot  H^{-\frac{1}{2}+\delta}$
\item  the mean value  $\vec F_0=\frac{1}{T}\int_0^T \vec F (t,.)\, dt$   belongs  to $ \dot B^{-\frac{3}{2}}_{2,\infty}$
 \item $\vec G $ belongs to $  L^2_{\rm per} H^{\frac{1}{2}+\delta}$ 
 \item the mean value  $\vec G_0=\frac{1}{T}\int_0^T \vec G (t,.)\, dt$   belongs to $\dot H^{-1}$
\item $\vec H $ is divergence-free ($\text{\rm div }\vec H =0$)  and  $\vec H $ belongs to $  L^2_{\rm per} H^{\frac{1}{2}+\delta}$  
\item the mean value   $\vec H_0=\frac{1}{T}\int_0^T \vec H (t,.)\, dt$ belongs to $\dot H^{-2}$. \end{enumerate}
\end{itemize}
\subsection{Proof of theorem \ref{theoNSM}.}

The problem is solved by a Picard iterative scheme:  $(\vec u_{\rm per},\vec E_{\rm per},\vec B_{\rm per})$ will be the limit of the time-periodic functions $(\vec U_n,\vec E_n,\vec B_n)$ solving the system
\begin{equation}\label{eqpic}
\left\{\begin{split} \partial_t \vec U_{n+1}-  \Delta\vec U_{n+1}&= \vec F_n \\  \partial_t\vec E_{n+1}-\vec\nabla\wedge\vec B_{n+1}+\vec E_{n+1}&=\vec G_n
\\ \partial_t\vec B_{n+1}+\vec\nabla\wedge \vec E_{n+1}&=\vec H_n
\\ \text{ div }\vec u_{n+1}= \text{\rm div }\vec  B_{n+1}&=0 
\end{split}\right.\end{equation}
with
\begin{equation} 
\left\{\begin{split} \vec U_0=0,\quad \vec E_0=0,&\quad \vec B_0=0\\  \vec F_n&=\mathbb{P}\left(-\text{ div } (\vec U_n\otimes \vec U_n)+\vec E_n\wedge \vec B_n+(\vec U_n\wedge\vec B_n)\wedge\vec B_n+\vec F_{\rm per}\right)\\  \vec G_n&=-\vec U_n\wedge\vec B_n+\vec G_{\rm per}
\\ \vec H_n&=\vec H_{\rm per}
\end{split}\right.\end{equation}
where $\mathbb{P}$ is the Leray projection operator on solenoidal vector fields.

The first Lemma gives product rules in Sobolev spaces that close the iterative scheme. More precisely, we have.
\begin{lemma}
\label{law prod lem}
 If $(\vec u,\vec E,\vec B)\in \mathbb{X} $ and $(\vec F,\vec G,\vec H)\in \mathbb{Y}$,  define  $(\vec F_1,\vec G_1,\vec H_1) $ as
 \begin{equation} 
\left\{\begin{split}   \vec F_1&=\mathbb{P}\left(-\text{\rm div } (\vec u\otimes \vec u)+\vec E\wedge \vec B+(\vec u\wedge\vec B)\wedge\vec B+\vec F\right)\\  \vec G_1&=-\vec u\wedge\vec B +\vec G 
\\ \vec H_1&=\vec H .
\end{split}\right.\end{equation}
Then we have $(\vec F_1,\vec G_1,\vec H_1)\in \mathbb{Y}$.
 \end{lemma}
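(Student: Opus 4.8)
The plan is to verify membership of each component $(\vec F_1, \vec G_1, \vec H_1)$ in $\mathbb{Y}$ by treating separately (i) the spatial Sobolev regularity on each time slice, integrated over the period, and (ii) the regularity of the time-mean, which lives in a lower-regularity Besov/Sobolev space. The trivial piece is $\vec H_1 = \vec H$: it is divergence-free, lies in $L^2_{\rm per} H^{1/2+\delta}$, and has mean $\vec H_0 \in \dot H^{-2}$ directly by hypothesis. For $\vec G_1 = -\vec u \wedge \vec B + \vec G$, I need $-\vec u\wedge \vec B \in L^2_{\rm per} H^{1/2+\delta}$, which follows from a product rule: with $\vec u \in L^\infty_{\rm per}\dot B^{1/2}_{2,\infty} \cap L^2_{\rm per}\dot H^{3/2+\delta}$ and $\vec B \in L^\infty_{\rm per} H^{1/2+\delta}$, a Bony-decomposition estimate of the form $\|fg\|_{H^{1/2+\delta}} \lesssim \|f\|_{\dot H^{3/2+\delta}}\|g\|_{H^{1/2+\delta}} + (\text{symmetric})$ — using that $H^{1/2+\delta}(\R^3)$ is almost an algebra and that $\dot H^{3/2+\delta} \hookrightarrow L^\infty$ — does the job after integrating the square in $t$ and pulling out the $L^\infty_t$ norm of the high-regularity factor's weaker companion. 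For the mean $\vec G_{1,0} = \vec G_0 - \frac1T\int_0^T \vec u\wedge\vec B\,dt$, I must put $\int_0^T \vec u \wedge \vec B\, dt$ in $\dot H^{-1}$: here I use $\dot H^{1/2}\cdot \dot H^{1/2} \hookrightarrow \dot H^{-1+3/2}\cdot\ldots$, more precisely $\|\vec u \wedge \vec B\|_{\dot H^{-1}} \lesssim \|\vec u\|_{\dot H^{1/2}}\|\vec B\|_{\dot H^{0}}$-type bounds (product $\dot H^{1/2}\times L^2 \to \dot H^{-1}$, valid in $d=3$), then bound by $\int_0^T \|\vec u\|_{\dot H^{1/2}}\|\vec B\|_{L^2}\,dt \lesssim T^{1/2}\|\vec u\|_{L^2_{\rm per}\dot H^{1/2}}\,\|\vec B\|_{L^\infty_{\rm per} L^2}$; the $\dot B^{1/2}_{2,\infty}\cap \dot H^{3/2+\delta}$ control interpolates to give the needed $L^2_{\rm per}\dot H^{1/2}$ bound.

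The substantive component is $\vec F_1$. After applying $\mathbb{P}$ (bounded on every $\dot H^s$), I split into the bilinear Navier--Stokes term $\div(\vec u\otimes\vec u)$, the Lorentz-type terms $\vec E\wedge\vec B$ and $(\vec u\wedge\vec B)\wedge\vec B$, and the forcing $\vec F$. For the slice regularity $L^2_{\rm per}(\dot H^{-1/2}\cap \dot H^{-1/2+\delta})$: the term $\div(\vec u\otimes\vec u)$ costs one derivative, so I need $\vec u\otimes\vec u \in \dot H^{1/2}\cap \dot H^{1/2+\delta}$, which comes from $\dot H^{1/2}\cdot \dot H^{3/2+\delta} \hookrightarrow \dot H^{1/2+\delta}$ (again via $\dot H^{3/2+\delta}\hookrightarrow L^\infty$ and paraproduct bounds) together with the $L^\infty_t\dot B^{1/2}_{2,\infty}$ and $L^2_t \dot H^{3/2+\delta}$ norms; for $\dot H^{1/2}$ one uses $\dot B^{1/2}_{2,\infty}$-type endpoint estimates or interpolation. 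The trilinear term $(\vec u\wedge\vec B)\wedge\vec B$ needs $\vec u\, \vec B^2 \in \dot H^{-1/2+\delta}$; since $\vec B \in H^{1/2+\delta}$ is (almost) an algebra, $\vec B^2 \in H^{1/2+\delta}$, and then $\vec u \cdot (\vec B^2)$ is handled by a product estimate $\dot H^{3/2+\delta}\times H^{1/2+\delta} \to H^{-1/2+\delta}\hookrightarrow\dot H^{-1/2+\delta}$ fiberwise in $t$, with the $L^2_t$ norm absorbed by $\vec u$ and the two $L^\infty_t$ factors on $\vec B$. Similarly $\vec E\wedge\vec B \in H^{1/2+\delta}\subset \dot H^{-1/2+\delta}$, though one must check the lowest-regularity endpoint $\dot H^{-1/2}$ separately using $H^{1/2+\delta}\hookrightarrow \dot H^{-1/2}$ locally at low frequencies — this is where keeping track of low versus high frequency contributions matters.

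The main obstacle I anticipate is the \emph{mean-value} bound for $\vec F_1$, i.e.\ showing $\vec F_{1,0} = \mathbb{P}\big(\vec F_0 - \tfrac1T\int_0^T\div(\vec u\otimes\vec u) + \tfrac1T\int_0^T(\vec E\wedge\vec B + (\vec u\wedge\vec B)\wedge\vec B)\,dt\big) \in \dot B^{-3/2}_{2,\infty}$. The issue is that $\dot B^{-3/2}_{2,\infty}$ is a genuinely weaker (low-frequency-heavy) space than the $L^2_{\rm per}\dot H^{-1/2}$ control, so one cannot simply integrate the slice estimate in time; instead one must gain decay of $-3/2-(-1/2) = -1$ derivatives at low frequencies from the bilinear/trilinear structure. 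For $\div(\vec u\otimes \vec u)$ the $\div$ already supplies one derivative, so it suffices that $\vec u\otimes\vec u \in \dot B^{-1/2}_{2,\infty}$ (in fact better), which follows from $\vec u \in L^\infty_{\rm per}\dot B^{1/2}_{2,\infty}$ via a low-frequency paraproduct estimate $\dot B^{1/2}_{2,\infty}\times \dot B^{1/2}_{2,\infty}\to \dot B^{1/2}_{2,\infty}\hookrightarrow\dot B^{-1/2}_{2,\infty}$ once one also invokes $L^2_t\dot H^{3/2+\delta}$ to control the high frequencies. For the terms without a derivative, $\vec E\wedge\vec B$ and $\vec u\,\vec B^2$, I would use the time-mean crucially: $\int_0^T \vec E\wedge\vec B\,dt$ should be estimated in $\dot H^{-1}\cap\dot H^{-1/2+\delta}$ first (using the $\dot H^{-1}$ control already established for $\vec G$-type means is a red herring; rather one bounds $\|\vec E\wedge\vec B\|_{\dot H^{-1}}\lesssim \|\vec E\|_{L^2}\|\vec B\|_{\dot H^{1/2}}$ and integrates), and then embed $\dot H^{-1}\cap\dot H^{-1/2+\delta}\hookrightarrow \dot B^{-3/2}_{2,\infty}$ — the first space handles high frequencies, and to reach the very negative index $-3/2$ at low frequencies one needs the extra information that $\vec E,\vec B$ have square-integrable-in-time $\dot H^{1/2}$-type norms, which is exactly what the $\mathbb{X}$/$\mathbb{Y}$ bookkeeping (and the decay built into the linear Maxwell semigroup) is designed to provide. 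I would organize this last part as a short lemma: a product estimate of the shape $\|\int_0^T f g\,dt\|_{\dot B^{-3/2}_{2,\infty}} \lesssim T^{1/2}\,\|f\|_{L^2_{\rm per}\dot H^{-1/2}}\,\|g\|_{L^\infty_{\rm per}\dot H^{1/2}} $ or similar, and then verify that each nonlinear contribution fits one such template.
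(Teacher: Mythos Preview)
Your overall strategy --- checking each nonlinear term for both the $L^2_{\rm per}$-slice regularity and the time-mean regularity --- matches the paper's, and most of your estimates for $\vec G_1$ and for $\div(\vec u\otimes\vec u)$ are correct in spirit. Two small corrections: the product law you quote as $\dot B^{1/2}_{2,\infty}\times\dot B^{1/2}_{2,\infty}\to\dot B^{1/2}_{2,\infty}$ is false; the correct target is $\dot B^{-1/2}_{2,\infty}$ (which is exactly what you need, since $\div$ then lands you in $\dot B^{-3/2}_{2,\infty}$). Also, $H^{1/2+\delta}(\R^3)$ is \emph{not} an algebra for $0<\delta<1$, so the claims $\vec B^2\in H^{1/2+\delta}$ and $\vec E\wedge\vec B\in H^{1/2+\delta}$ are wrong as stated; you need the genuine product laws $\dot H^{1/2}\times\dot H^{1/2}\to\dot H^{-1/2}$ and $\dot H^{1/2+\delta}\times\dot H^{1/2}\to\dot H^{-1/2+\delta}$ instead.

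The real gap is precisely where you flag ``the main obstacle'': the mean-value estimate $\vec F_{1,0}\in\dot B^{-3/2}_{2,\infty}$ for the Lorentz-type pieces $\vec E\wedge\vec B$ and $(\vec u\wedge\vec B)\wedge\vec B$. Your proposed route --- bound in $\dot H^{-1}\cap\dot H^{-1/2+\delta}$ and then embed into $\dot B^{-3/2}_{2,\infty}$ --- fails, because $\dot H^{-1}$ does not embed into $\dot B^{-3/2}_{2,\infty}$ at low frequencies: a function with $\|\Delta_j f\|_{L^2}\sim 2^{j}$ as $j\to-\infty$ lies in $\dot H^{-1}$ but has $2^{-3j/2}\|\Delta_j f\|_{L^2}\sim 2^{-j/2}\to\infty$. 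Your appeal to ``decay built into the linear Maxwell semigroup'' is circular here, since the lemma is a purely static product estimate on $\mathbb{X}$ and $\mathbb{Y}$. The paper's fix is much simpler than any of your templates: group the two terms as $\vec J\wedge\vec B$ with $\vec J=\vec E+\vec u\wedge\vec B$, which you have already placed in $L^2_{\rm per}H^{1/2+\delta}\subset L^2_{\rm per}L^2$ when handling $\vec G_1$; since $\vec B\in L^\infty_{\rm per}H^{1/2+\delta}\subset L^\infty_{\rm per}L^2$, H\"older gives $\vec J\wedge\vec B\in L^2_{\rm per}L^1$, and the embedding $L^1(\R^3)\hookrightarrow\dot B^{-3/2}_{2,\infty}$ (Bernstein: $2^{-3j/2}\|\Delta_j f\|_{L^2}\lesssim\|f\|_{L^1}$) supplies the missing low-frequency decay in one line.
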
 
 
 \begin{proof}  Point-wise product maps $\dot B^{3/2}_{2,1}\times H^{\frac{1}{2}+\delta}$ to $H^{\frac{1}{2}+\delta}$. As 
 $L^\infty_{\rm per} \dot B^{\frac{1}{2}}_{2,\infty}\cap L^2_{\rm per} \dot  H^{\frac{3}{2}+\delta} \subset L^2_{\rm per}\dot B^{3/2}_{2,1}$, we find that $\vec G_1 $ belongs to $  L^2_{\rm per} H^{\frac{1}{2}+\delta}$. 
 
 Moreover, pointwise product maps $\dot B^{1/2}_{2,\infty}\times L^2$ to $\dot H^{-1}$.  Thus, we find that $\vec u\wedge\vec B$ belongs to $L^\infty_{\rm per}\dot H^{-1}$ and the mean value of $\vec G_1 $ belongs to $ \dot H^{-1}$.  
 
 For $s> 0$, we have $$\|fg\|_{\dot H^s}\leq C_s (\|f\|_{\dot B^{1/2}_{2,\infty}} \|g\|_{\dot H^{s+1}}+\|g\|_{\dot B^{1/2}_{2,\infty}} \|f\|_{\dot H^{s+1}}.$$ Using the embedding  $L^\infty_{\rm per} \dot B^{\frac{1}{2}}_{2,\infty}\cap L^2_{\rm per} \dot  H^{\frac{3}{2}+\delta} \subset L^2_{\rm per}\dot  H^{3/2} $, we find that  the term $\text{\rm div }(\vec u\otimes\vec u)$ belongs to $L^2_{\rm per} \dot H^{-\frac{1}{2}} \cap L^2_{\rm per} \dot  H^{-\frac{1}{2}+\delta}$. Moreover, pointwise product maps $\dot B^{1/2}_{2,\infty}\times \dot B^{1/2}_{2,\infty}$ to $\dot B^{-1/2}_{2,\infty}$, so that  $\text{\rm div }(\vec u\otimes\vec u)$ belongs to $L^\infty_{\rm per} \dot B^{-3/2}_{2,\infty}$ and the mean value of  $ \text{\rm div }(\vec u\otimes\vec u)$ belongs to $ \dot B^{-3/2}_{2,\infty}$. 
 
 We have $\vec E+\vec u\wedge\vec B\in  L^2_{\rm per}  H^{\frac{1}{2}+\delta} \subset  L^2_{\rm per}\dot H^{\frac{1}{2}}\cap L^2_{\rm per}\dot H^{\frac{1}{2}+\delta}$ and $B\in L^\infty_{\rm per} H^{\frac{1}{2}+\delta}\subset L^\infty_{\rm per} \dot H^{1/2}$. This gives $(\vec E+\vec u\wedge\vec B)\wedge\vec B \in L^2_{\rm per} \dot H^{-\frac{1}{2}} \cap L^2_{\rm per} \dot  H^{-\frac{1}{2}+\delta}$.   Moreover, we have $\vec E+\vec u\wedge\vec B\in  L^2_{\rm per}  H^{\frac{1}{2}+\delta} \subset  L^2_{\rm per}L^2$ and  $B\in L^\infty_{\rm per} H^{\frac{1}{2}+\delta}\subset L^\infty_{\rm per}L^2$, so that  $(\vec E+\vec u\wedge\vec B)\wedge\vec B \in L^2_{\rm per}  L^1 \subset L^2_{\rm per} \dot  B^{-3/2}_{2,\infty}$ and we find that the mean value of  $(\vec E+\vec u\wedge\vec B)\wedge\vec B $ belongs to $ \dot  B^{-3/2}_{2,\infty}$.  
 Thus, the lemma is proved.  
 \end{proof}
 The second Lemma shows that given a time-periodic forcing, we can construct, in the right functional space, a solution to the linear problem in the iterative scheme.  Indeed,
 
 \begin{lemma}\label{lemXY}$\ $\\
 Let  $(\vec F,\vec G,\vec H)\in \mathbb{Y}$.  Then the time-periodic solution $\vec\Gamma:=(\vec u,\vec E,\vec B)$ of the system
 \begin{equation} \label{syst}
\left\{\begin{split} \partial_t \vec  u-  \Delta\vec u&= \vec F \\  \partial_t\vec E-\vec\nabla\wedge\vec B +\vec  E &=\vec G
\\ \partial_t\vec B +\vec\nabla\wedge \vec E &=\vec H 
\\   \text{\rm  div }\vec  B &=0 
\end{split}\right.\end{equation}
 satisfies $(\vec u,\vec E,\vec B)\in\mathbb{X}$.
 \end{lemma}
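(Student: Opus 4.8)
The plan is to solve the linear system \eqref{syst} by passing to Fourier series in time, treating the Navier--Stokes block (the heat equation with forcing) and the damped Maxwell block separately, since they decouple at the linear level. Writing $\vec F(t,x)=\sum_{k\in\Z}e^{2\pi i k t/T}\vec F^{(k)}(x)$ and similarly for $\vec G,\vec H,\vec u,\vec E,\vec B$, the equation for each Fourier mode becomes an elliptic problem on $\R^3$: for $\vec u$, $(2\pi i k/T-\Delta)\vec u^{(k)}=\vec F^{(k)}$, and for the electromagnetic part a $6\times 6$ system $M_k(D)\begin{pmatrix}\vec E^{(k)}\\\vec B^{(k)}\end{pmatrix}=\begin{pmatrix}\vec G^{(k)}\\\vec H^{(k)}\end{pmatrix}$ whose symbol is analyzed in the Appendix referred to in the excerpt. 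First I would handle the mean ($k=0$) part, which is where the hypotheses on $\vec F_0\in\dot B^{-3/2}_{2,\infty}$, $\vec G_0\in\dot H^{-1}$, $\vec H_0\in\dot H^{-2}$ enter: the zero mode solves the stationary problem $-\Delta\vec u^{(0)}=\vec F^{(0)}$, $\vec E^{(0)}=\curl\vec B^{(0)}+\vec G^{(0)}$ together with $\curl\vec E^{(0)}=\vec H^{(0)}$ and $\div\vec B^{(0)}=0$, which one inverts explicitly in Fourier, gaining two derivatives for $\vec u^{(0)}$ (so $\dot B^{-3/2}_{2,\infty}\to\dot B^{1/2}_{2,\infty}$) and reconstructing $\vec B^{(0)}$ from $\vec H^{(0)}$ by $\vec B^{(0)}=\curl(-\Delta)^{-1}\vec H^{(0)}$ (so $\dot H^{-2}\to\dot H^{0}$... and then one checks the $\dot H^{1/2+\delta}$ regularity coming from the $L^2_{\rm per}H^{1/2+\delta}$ forcing feeding the nonzero modes; see below).

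\noindent For the nonzero modes, the key is maximal-regularity-type bounds that are uniform in $k$. For the heat block, $\|\vec u^{(k)}\|_{\dot H^{3/2+\delta}}+|k|\,\|\vec u^{(k)}\|_{\dot H^{-1/2+\delta}}\lesssim\|\vec F^{(k)}\|_{\dot H^{-1/2+\delta}}$ is the standard $L^2$-maximal regularity estimate (multiplier $\xi^2/(2\pi ik/T+\xi^2)$ bounded), and summing $\sum_k\|\vec F^{(k)}\|_{\dot H^{-1/2+\delta}}^2\sim\frac1T\int_0^T\|\vec F\|_{\dot H^{-1/2+\delta}}^2$ by Parseval gives $\vec u\in L^2_{\rm per}\dot H^{3/2+\delta}$; the same for the $-1/2$ scale, and the $L^\infty_{\rm per}\dot B^{1/2}_{2,\infty}$ bound follows by interpolating/summing the Duhamel formula in each dyadic block (this is precisely the Chemin--Lerner $\tilde L$-space bookkeeping). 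For the Maxwell block I would use the spectral decomposition of $M_k(D)$ from the Appendix: the eigenvalues are $\{-1\pm\sqrt{1-4|\xi|^2}\}/2$-type branches (weakly damped for low $|\xi|$, oscillatory with damping $\sim -1/2$ for high $|\xi|$), so that $M_k(D)^{-1}$ is a bounded Fourier multiplier on $L^2$-based spaces with constant uniform in $k$ except near resonances $2\pi k/T\approx$ an eigenvalue-imaginary-part, where one gets at worst a finite loss absorbed by the $\delta$ of regularity; this yields $\vec E,\vec B\in L^\infty_{\rm per}H^{1/2+\delta}$ from $\vec G,\vec H\in L^2_{\rm per}H^{1/2+\delta}$ after summing the Fourier series (here the $H^{1/2+\delta}$ rather than $\dot H^{1/2+\delta}$ is important: the low frequencies of $\vec E,\vec B$ are controlled using the mean-value hypotheses 4 and 6, the high frequencies by the homogeneous estimate).

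\noindent \textbf{The main obstacle} I expect is the low-frequency behavior of the Maxwell block together with the divergence constraint. Unlike the heat semigroup, the damped Maxwell propagator has genuinely weak decay/inversion at low frequencies --- the symbol $M_k(\xi)^{-1}$ degenerates as $|\xi|\to0$ along the ``magnetic'' branch --- which is exactly why the statement separates $\vec E_0,\vec B_0$ into $\dot H^{-1},\dot H^{-2}$ and why the full norm is the inhomogeneous $H^{1/2+\delta}$ rather than a homogeneous Besov norm. Concretely one must show that for $k\ne 0$ the low-frequency part of $(\vec E^{(k)},\vec B^{(k)})$ is controlled in $L^2_x$ by $\|(\vec G^{(k)},\vec H^{(k)})\|_{L^2_x}$ uniformly --- here the nonvanishing time frequency $2\pi k/T$ is what saves us (it keeps $M_k(\xi)$ invertible at $\xi=0$, with $\|M_k(0)^{-1}\|\sim 1/|k|$), so the only truly singular mode is $k=0$, handled by the explicit elliptic inversion above using $\div\vec H=0$ to reconstruct $\vec B^{(0)}=\curl(-\Delta)^{-1}\vec H^{(0)}\in\dot H^0$ and close the inhomogeneous bound. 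A secondary technical point is checking that the Leray projection and the reconstruction of $\vec B$ from $\curl\vec E=\vec H$, $\div\vec B=0$ are consistent at every Fourier mode --- this is routine once the symbols are written down but needs the hypothesis $\div\vec H=0$ at mode $k=0$ to be solvable, and for $k\ne0$ the constraint $\div\vec B^{(k)}=0$ is automatically propagated because $\div\vec H^{(k)}=2\pi i k/T\,\div\vec B^{(k)}$ forces $\div\vec B^{(k)}=0$ as well. Once all modes are estimated and Parseval is applied, summing gives $(\vec u,\vec E,\vec B)\in\mathbb X$, which proves the lemma.
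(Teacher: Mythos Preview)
Your strategy---Fourier series in time, explicit inversion of each mode, separate treatment of the mean $k=0$ versus the fluctuation $k\neq 0$---is exactly the approach of the paper. Two corrections are in order, however.

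First, a minor slip: for the zero mode one gets $\vec B^{(0)}=(-\Delta)^{-1}\bigl(\vec H^{(0)}-\vec\nabla\wedge\vec G^{(0)}\bigr)$, not just $\curl(-\Delta)^{-1}\vec H^{(0)}$; the contribution of $\vec G^{(0)}$ is precisely why the hypothesis $\vec G_0\in\dot H^{-1}$ is needed to place $\vec B_0$ in $L^2$.

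Second, and more importantly, your description of the resonance issue is off. You write that near $2\pi k/T\approx$ imaginary part of an eigenvalue ``one gets at worst a finite loss absorbed by the $\delta$ of regularity.'' There is no room for any loss: the target space $\mathbb{X}$ requires $\vec E,\vec B\in L^\infty_{\rm per}H^{1/2+\delta}$ at \emph{exactly} the same regularity index as the data $\vec G,\vec H\in L^2_{\rm per}H^{1/2+\delta}$, so the $\delta$ cannot absorb anything in this linear lemma (its role is entirely in the product rules of the preceding lemma). What one must actually do is show, with no loss, that the multiplier sums
\[
\sup_{\xi}\ \sum_{k\neq 0}\ \frac{1+|\xi|^2+\tfrac{4\pi^2}{T^2}k^2}{\bigl(|\xi|^2-\tfrac{4\pi^2}{T^2}k^2\bigr)^2+\tfrac{4\pi^2}{T^2}k^2}
\]
are finite. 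This reduces to the elementary but nontrivial fact that $\displaystyle\sup_{t\geq 0}\sum_{k\geq 1}\frac{t+k^2}{k^2+(t-k^2)^2}<\infty$, which the paper proves by splitting the integers into a resonant window $|t-k^2|\leq\tfrac14(t+k^2)$ (on which $k\approx\sqrt t$ and $|k^2-t|\gtrsim\sqrt t\,|k-\sqrt t|$, so the contribution is bounded by a convergent series in $k-\sqrt t$) and its complement (where the denominator dominates trivially). This resonance count is the technical heart of the lemma; once you have it, Cauchy--Schwarz on the time Fourier series gives the $L^\infty_{\rm per}$ bound directly, with no recourse to Chemin--Lerner spaces or spectral projectors.
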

 
 \begin{proof} We follow the formalism of Kyed \cite{Kyed} and expand $\vec F$, $\vec G$, $\vec H$, $\vec u$, $\vec E$ and $\vec B$ as time Fourier series:
  \begin{equation} 
\left\{\begin{split}  \vec F&=  \sum_{k\in\mathbb{Z}} \vec F_k(x) e^{ik\frac{2\pi}{T}t}
\\  \vec G&=  \sum_{k\in\mathbb{Z}} \vec G_k(x) e^{ik\frac{2\pi}{T}t}
\\  \vec H&=  \sum_{k\in\mathbb{Z}} \vec H_k(x) e^{ik\frac{2\pi}{T}t}
\\  \vec u&=  \sum_{k\in\mathbb{Z}} \vec U_k(x) e^{ik\frac{2\pi}{T}t}
\\  \vec E&=  \sum_{k\in\mathbb{Z}} \vec E_k(x) e^{ik\frac{2\pi}{T}t}
\\  \vec B&=  \sum_{k\in\mathbb{Z}} \vec B_k(x) e^{ik\frac{2\pi}{T}t}.
\end{split}\right.\end{equation}
Define $\hat F_k, \hat G_k, \hat H_k, \hat U_k, \hat E_k, \hat B_k$ the Fourier transforms, in space, of $\vec F_k, \vec G_k, \vec H_k, \vec U_k, \vec E_k, \vec B_k$.
First, we explicitly solve for $(\vec U,\vec E,\vec B)$. From $\partial_t \vec  u-  \Delta\vec u = \vec F $, we get :
   \begin{equation}  \hat U_k(\xi)= \frac{1}{\vert\xi\vert^2+i\frac{2\pi}{T}k}\hat F_k(\xi).\end{equation}
   From $$\partial_t^2\vec B=\partial_t\vec H-\vec\nabla\wedge \partial_t\vec E=\Delta\vec  B-\vec\nabla\wedge\vec G+\vec H-\partial_t \vec B,$$ we get
      \begin{equation}  \hat B_k(\xi)= \frac{1}{\vert\xi\vert^2-\frac{4\pi^2}{T^2}k^2+i\frac{2\pi}{T}k} \left( (1+i\frac{2\pi}{T}k) \hat H_k(\xi)-i\vec\xi\wedge\hat G_k(\xi) \right).\end{equation}
      From $\partial_t\vec E+\vec E=\vec G+\vec\nabla\wedge \vec B$, we get
      $$ \hat E_k(\xi)=\frac{1}{1+i\frac{2\pi}{T}k} (\hat G_k(\xi)+i\vec\xi\wedge \hat B_k(\xi)).$$
      If we decompose $\vec E_k$ into its solenoidal part $\vec E_{k,\sigma}$ and its irrotational part $\vec E_{k,\nabla}=\vec\nabla\frac{1}{\Delta}\text{\rm div }\vec E_k$, and similarly write $\vec G_k=\vec G_{k,\sigma}+\vec G_{k,\nabla}$, we get
            \begin{equation}  \hat E_{k,\sigma}(\xi)= \frac{1}{\vert\xi\vert^2-\frac{4\pi^2}{T^2}k^2+i\frac{2\pi}{T}k} \left(  i\vec\xi\wedge \hat H_k(\xi) + i\frac{2\pi}{T} k\, \hat G_{k,\sigma}(\xi) \right)\end{equation}
            and
                  \begin{equation}  \hat  E_{k,\nabla}(\xi)= \frac{1}{1+i\frac{2\pi}{T}k} \hat G_{k,\nabla}(\xi) .\end{equation}
      
Then, we proceed to estimate the solution. We are going to separately estimate  the time averages $\vec U_0$, $\vec E_0$ and $\vec B_0$ and the fluctuation components $\vec U_f=\vec u-\vec U_0$, $\vec E_f=\vec E-\vec E_0$ and $\vec B_f=\vec B-\vec B_0$. \\
Notice that $\vec U_0=-\frac{1}{\Delta}\vec F_0$, and since $\vec F_0\in \dot B^{-3/2}_{2,\infty}\cap \dot H^{-\frac{1}{2}+\delta}$, we get $\vec U_0\in \dot B^{1/2}_{2,\infty}\cap \dot  H^{\frac{3}{2}+\delta}$.\\       
Similarly, we have $\vec B_0=\frac{1}{\Delta}\vec\nabla\wedge \vec G_0-\frac{1}{\Delta} \vec H_0$.  Since $\vec G_0\in \dot H^{-1}\cap H^{\frac{1}{2}+\delta}$ and $\vec H_0\in \dot H^{-2}\cap  H^{\frac{1}{2}+\delta}$, we find that $\vec B_0\in H^{\frac{1}{2}+\delta}$. \\
We have $\vec E_0=\vec G_{0,\nabla}-\frac{1}{\Delta}\vec\nabla\wedge \vec H_0$. Since $\vec G_0\in H^{\frac{1}{2}+\delta}$ and $\vec H_0\in \dot H^{-2}\cap H^{\frac{1}{2}+\delta}$, we find that $\vec E_0\in H^{\frac{1}{2}+\delta}$.\\
Next, we estimate $\vec U_f$. We have
     \begin{equation*}\begin{split}
     \int_0^T \|\vec U_f\|_{\dot H^{\frac{3}{2}+\delta}}^2\; dt &= T\sum_{k\neq 0} \|\vec U_k\|_{\dot H^{\frac{3}{2}+\delta}}^2\\ &= \frac{T}{(2\pi)^3} \sum_{k\neq 0} \int_{\mathbb{R}^3} \frac{\vert\xi\vert^{3+2\delta}}{\vert\xi\vert^4+ \frac{4\pi^2}{T^2}k^2} \vert\hat F_k(\xi)\vert^2\, d\xi
     \\ &=  \frac{T}{(2\pi)^3} \sum_{k\neq 0} \int_{\mathbb{R}^3} \frac{\vert\xi\vert^{4}}{\vert\xi\vert^4+ \frac{4\pi^2}{T^2}k^2} \vert \xi\vert^{-1+2\delta} \vert\hat F_k(\xi)\vert^2\, d\xi\\ &\leq   \int_0^T \|\vec F_f\|_{\dot H^{-\frac{1}{2}+\delta}}^2\; dt
     \end{split}\end{equation*} so that $\vec U_f\in L^2_{\rm per} \dot H^{\frac{3}{2}+\delta}$. On the other hand, we have :
          \begin{equation*}\begin{split}
    \|\vec U_f(t,.)\|_{\dot H^{\frac{1}{2}}}^2&=   \frac{1}{(2\pi)^3}\int_{\mathbb{R}^3}  \vert\xi\vert\,  \left\vert \
     \sum_{k\neq 0} \hat  U_k(\xi)\, 
     e^{i\frac{2\pi}{T}kt}   
     \right\vert^2\, d\xi
     \\ &\leq  \frac{1}{(2\pi)^3}  \int_{\mathbb{R}^3} (\sum_{k\neq 0} \frac{\vert\xi\vert^{2}}{\vert\xi\vert^4+ \frac{4\pi^2}{T^2}k^2} ) (\sum_{k\neq 0}\frac{\vert\hat F_k(\xi)\vert^2}{\vert\xi\vert})\, d\xi
     \\ &\leq  A_T  \int_0^T \|\vec F_f\|_{\dot H^{-\frac{1}{2}}}^2\; dt
     \end{split}\end{equation*}
     with 
     $$ A_T=\frac{1}{T}\sup_{\xi\in\mathbb{R}^3} \sum_{k\neq 0} \frac{\vert\xi\vert^{2}}{\vert\xi\vert^4+ \frac{4\pi^2}{T^2}k^2} .
     $$
Similarly, we have
      \begin{equation*}\begin{split}
    \|\vec B_f(t,.)\|_{  H^{\frac{1}{2}+\delta}}^2&=   \frac{1}{(2\pi)^3}\int_{\mathbb{R}^3}  (1+\vert\xi\vert^2)^{\frac{1}{2}+\delta}\,  \left\vert \
     \sum_{k\neq 0} \hat  B_k(\xi)\, 
     e^{i\frac{2\pi}{T}kt}   
     \right\vert^2\, d\xi
     \\ &\leq  \frac{2}{(2\pi)^3}  \int_{\mathbb{R}^3}  (1+\vert\xi\vert^2)^{\frac{1}{2}+\delta}\,  (\sum_{k\neq 0} \frac{\vert\xi\vert^{2}}{\left(\vert\xi\vert^2- \frac{4\pi^2}{T^2}k^2\right)^2+ \frac{4\pi^2}{T^2}k^2} ) (\sum_{k\neq 0}\ \vert\hat G_k(\xi)\vert^2 )\, d\xi
      \\ &\ +  \frac{2}{(2\pi)^3}  \int_{\mathbb{R}^3} (1+\vert\xi\vert^2)^{\frac{1}{2}+\delta}\,  (\sum_{k\neq 0} \frac{1+ \frac{4\pi^2}{T^2}k^2}{\left(\vert\xi\vert^2- \frac{4\pi^2}{T^2}k^2\right)^2+ \frac{4\pi^2}{T^2}k^2} ) (\sum_{k\neq 0}\ \vert\hat H_k(\xi)\vert^2 )\, d\xi
     \\ &\leq  B_T (  \int_0^T \|\vec G_f\|_{  H^{\frac{1}{2}+\delta}}^2\; dt +  \int_0^T \|\vec H_f\|_{  H^{\frac{1}{2}+\delta}}^2\; dt)
     \end{split}\end{equation*}
with
$$ B_T=\sup_{\xi\in\mathbb{R}^3} \frac{1}{T} \sum_{k\neq 0}  \frac{1+ \vert\xi\vert^2+\frac{4\pi^2}{T^2}k^2}{\left(\vert\xi\vert^2- \frac{4\pi^2}{T^2}k^2\right)^2+ \frac{4\pi^2}{T^2}k^2}.
$$
We  have as well 
      \begin{equation*}\begin{split}
    \|\vec E_{\sigma,f}(t,.)\|_{  H^{\frac{1}{2}+\delta}}^2&=   \frac{1}{(2\pi)^3}\int_{\mathbb{R}^3}  (1+\vert\xi\vert^2)^{\frac{1}{2}+\delta}\,  \left\vert \
     \sum_{k\neq 0} \hat  E_{\sigma,k}(\xi)\, 
     e^{i\frac{2\pi}{T}kt}   
     \right\vert^2\, d\xi
     \\ &\leq  \frac{2}{(2\pi)^3}  \int_{\mathbb{R}^3}  (1+\vert\xi\vert^2)^{\frac{1}{2}+\delta}\,  (\sum_{k\neq 0} \frac{\vert\xi\vert^{2}}{\left(\vert\xi\vert^2- \frac{4\pi^2}{T^2}k^2\right)^2+ \frac{4\pi^2}{T^2}k^2} ) (\sum_{k\neq 0}\ \vert\hat H_k(\xi)\vert^2 )\, d\xi
      \\ &\ +  \frac{2}{(2\pi)^3}  \int_{\mathbb{R}^3} (1+\vert\xi\vert^2)^{\frac{1}{2}+\delta}\,  (\sum_{k\neq 0} \frac{ \frac{4\pi^2}{T^2}k^2}{\left(\vert\xi\vert^2- \frac{4\pi^2}{T^2}k^2\right)^2+ \frac{4\pi^2}{T^2}k^2} ) (\sum_{k\neq 0}\ \vert\hat G_{\sigma,k}(\xi)\vert^2 )\, d\xi
     \\ &\leq  C_T (  \int_0^T \|\vec G_{\sigma,f}\|_{  H^{\frac{1}{2}+\delta}}^2\; dt +  \int_0^T \|\vec H_f\|_{  H^{\frac{1}{2}+\delta}}^2\; dt)
     \end{split}\end{equation*}
with
$$ C_T=\sup_{\xi\in\mathbb{R}^3} \frac{1}{T} \sum_{k\neq 0}  \frac{ \vert\xi\vert^2+\frac{4\pi^2}{T^2}k^2}{\left(\vert\xi\vert^2- \frac{4\pi^2}{T^2}k^2\right)^2+ \frac{4\pi^2}{T^2}k^2}.
$$
Finally, we have
      \begin{equation*}\begin{split}
    \|\vec E_{\nabla,f}(t,.)\|_{  H^{\frac{1}{2}+\delta}}^2&=   \frac{1}{(2\pi)^3}\int_{\mathbb{R}^3}  (1+\vert\xi\vert^2)^{\frac{1}{2}+\delta}\,  \left\vert \
     \sum_{k\neq 0} \hat  E_{\nabla,k}(\xi)\, 
     e^{i\frac{2\pi}{T}kt}   
     \right\vert^2\, d\xi
     \\ &\leq  \frac{1}{(2\pi)^3}    \int_{\mathbb{R}^3} (1+\vert\xi\vert^2)^{\frac{1}{2}+\delta}\,  (\sum_{k\neq 0} \frac{ 1}{ 1+ \frac{4\pi^2}{T^2}k^2} ) (\sum_{k\neq 0}\ \vert\hat G_{\sigma,k}(\xi)\vert^2 )\, d\xi
     \\ &\leq  D_T  \int_0^T \|\vec G_{\nabla,f}\|_{  H^{\frac{1}{2}+\delta}}^2\; dt       \end{split}\end{equation*}
with
$$ D_T=\sup_{\xi\in\mathbb{R}^3} \frac{1}{T} \sum_{k\neq 0}  \frac{  1}{ 1+ \frac{4\pi^2}{T^2}k^2}.$$

$\ $
Thus, in order to finish the proof of the lemma, we need only to check  that  $A_T$, $B_T$, $C_T$ and $D_T$ are finite.  Equivalently, we must check that $$\alpha_0=\sup_{t\geq 0} \sum_{k=1}^{+\infty} \frac{t}{t^2+k^2}<+\infty$$ and
$$\beta_0=\sup_{t\geq 0}  \sum_{k=1}^{+\infty} \frac{t+k^2}{k^2+(t-k^2)^2} <+\infty.$$ If $t\leq 1/2$, we have $\frac{t}{t^2+k^2}\leq \frac{1}{2k^2}$ and $ \frac{t+k^2}{k^2+(t-k^2)^2} \leq \frac{3}{2k^2}$ and the control of the sum is easy. Thus, we consider only the case $t>1/2$.\\
We write
$$ \sum_{k=1}^{+\infty} \frac{t}{t^2+k^2} \leq \sum_{1\leq k\leq 2t } \frac{t}{t^2}+ \sum_{k>2t}  \frac{t}{k^2}\leq 2+ t\min(\frac{1}{2t-1}, 1+\frac{1}{2t})\leq \frac{7}{2}.
$$
Thus, $\alpha_0<+\infty$.\\
The case of $\beta_0$ is more delicate. We call $\Lambda(t)$ the set of integers $k$ such that $\vert t-k^2\vert \leq \frac{1}{4}(t+k^2)$. We have
$$ \sum_{k\notin \Lambda(t)}  \frac{t+k^2}{k^2+(t-k^2)^2}\leq 16 \sum_{k=1}^{+\infty} \frac{t+k^2}{t^2+k^4}\leq 16\,  (\frac{7}{2}+\frac{\pi^2}{6}).$$
We then must estimate  $ \sum_{k\in  \Lambda(t)}  \frac{t+k^2}{k^2+(t-k^2)^2}$.  On $\Lambda(t)$, we have $k^2\in (\frac{3}{5}t,\frac{5}{3}t)$, thus $\vert k-\sqrt t\vert<\frac{1}{3}\sqrt t$ and $\vert k^2-t\vert\geq \vert k-\sqrt t\vert \, \frac{5}{3}\sqrt t$. This gives
$$ \sum_{k\in  \Lambda(t)}  \frac{t+k^2}{k^2+(t-k^2)^2}\leq \frac{8}{3} \sum_{k=1}^\infty \min(\frac{5}{3}, \frac{9}{25 (k-\sqrt t)^2})\leq 5+\frac{8\pi^2}{25}.$$
 \end{proof}
 
 \subsection{Another proof of Lemma \ref{lemXY}: Energy-type estimate}
 
 We give another proof of Lemma \ref{lemXY} :
 
 \begin{proof}  We have, for $k\neq 0$,   $\left \vert  \vert\xi\vert^2-\frac{4\pi^2}{T^2}k^2+i\frac{2\pi}{T}k \right\vert\geq 
\frac{2\pi}{T} \vert k\vert$, and for $\vert\xi\vert> \frac{4\pi}{T} \vert k\vert$, $\left \vert  \vert\xi\vert^2-\frac{4\pi^2}{T^2}k^2+i\frac{2\pi}{T}k \right\vert\geq 
\frac{3}{4} \vert\xi\vert^2.$ Thus, it is straightforward that the solution $(\vec u,\vec E,\vec B)$ of system (\ref{syst}) satisfies  
$$\int_0^T \|\vec E_f\|_{H^{1/2+\delta}}^2+\|\vec B_f\|_{H^{1/2+\delta}}^2\, dt\leq C \int_0^T \|\vec G_f\|_{H^{1/2+\delta}}^2+\|\vec H_f\|_{H^{1/2+\delta}}^2\, dt.$$

Now, if we assume that  $ \vec G_f$ and $\vec H_f$ are trigonometric polynomials with respect to the time variable and with values, in $H^{3/2+\delta}$, we find that $\vec E_f$ and $\vec B_f$ are in $\mathcal{C}_{\rm per} H^{1/2+\delta}$ and that $\partial_t \vec E_f$ and $\partial_t\vec f$ belong to $L^2_{\rm per} H^{\frac{1}{2}+\delta}$.. Moreover, we have, writing $\vec{ \mathcal{E}}= (Id-\Delta)^{\frac{1}{4}+\frac{\delta}{2}} \vec E_f$ and $\vec{\mathcal{B}}= (Id-\Delta)^{\frac{1}{4}+\frac{\delta}{2}} \vec B_f$, 
\begin{equation*}\begin{split} \frac{d}{dt}\left(\frac{ \|\vec E_f\|_{H^{\frac{1}{2}+\delta}}^2+  \|\vec B_f\|_{H^{\frac{1}{2}+\delta}}^2}{2}\right) &  =\int_{\mathbb{R}^3} \partial_t \vec{ \mathcal{E}}.\vec{ \mathcal{E}}+\partial_t\vec{ \mathcal{B}}.\vec{ \mathcal{B}}\, dx
\\   =   \int_{\mathbb{R}^3}   \vec{ \mathcal{E}}.( (Id-\Delta)^{\frac{1}{4}+\frac{\delta}{2}} G_f - \vec{ \mathcal{E}}+\vec\nabla\wedge \vec{ \mathcal{B}})&+\vec{ \mathcal{B}}.( (Id-\Delta)^{\frac{1}{4}+\frac{\delta}{2}} \vec H_f-\vec\nabla\wedge \vec{ \mathcal{E}})\, dx
\\   =   \int_{\mathbb{R}^3}   \vec{ \mathcal{E}}.( (Id-\Delta)^{\frac{1}{4}+\frac{\delta}{2}} G_f - \vec{ \mathcal{E}})&+\vec{ \mathcal{B}}. (Id-\Delta)^{\frac{1}{4}+\frac{\delta}{2}} \vec H_f \, dx
\\  \leq \|\vec E_f\|_{H^{1/2+\delta}}\|\vec G_f\|_{H^{1+2+\delta}}&+ \|\vec B_f\|_{H^{1/2+\delta}}\|\vec H_f\|_{H^{1+2+\delta}}
\end{split}\end{equation*}
This gives, for $-T\leq t_0\leq0\leq t\leq T$
\begin{equation*}\begin{split}    \|\vec E_f(t,.)\|_{H^{\frac{1}{2}+\delta}}^2+  \|\vec B_f(t,.)\|_{H^{\frac{1}{2}+\delta}}^2  & \leq    \|\vec E_f(t_0,..)\|_{H^{\frac{1}{2}+\delta}}^2+  \|\vec B_f(t_0,.)\|_{H^{\frac{1}{2}+\delta}}^2 \\
& +2\int_{-T}^T \|\vec E_f\|_{H^{1/2+\delta}}\|\vec G_f\|_{H^{1+2+\delta}} \, ds\\&+2\int_{-T}^T \|\vec B_f\|_{H^{1/2+\delta}}\|\vec H_f\|_{H^{1+2+\delta}}\, ds
\end{split}\end{equation*}
Integrating with respect to $t_0$, we find
  \begin{equation*}\begin{split}    \|\vec E_f(t,.)\|_{H^{\frac{1}{2}+\delta}}^2+  \|\vec B_f(t,.)\|_{H^{\frac{1}{2}+\delta}}^2  & \leq  \frac{1}{T}\int_0^T  \|\vec E_f(t_0,..)\|_{H^{\frac{1}{2}+\delta}}^2+  \|\vec B_f(t_0,.)\|_{H^{\frac{1}{2}+\delta}}^2\, dt_0 \\
& +4\int_0^T \|\vec E_f\|_{H^{1/2+\delta}}\|\vec G_f\|_{H^{1+2+\delta}} \, ds\\&+4\int_0^T \|\vec B_f\|_{H^{1/2+\delta}}\|\vec H_f\|_{H^{1+2+\delta}}\, ds
\end{split}\end{equation*}
and finally
$$  \|\vec E_f(t,.)\|_{H^{\frac{1}{2}+\delta}}^2+  \|\vec B_f(t,.)\|_{H^{\frac{1}{2}+\delta}}^2  \leq C \int_0^T  \|\vec G_f\|_{H^{1+2+\delta}} ^2+\|\vec H_f\|_{H^{1+2+\delta}} ^2\, ds).$$
We then conclude the proof of the lemma by a density argument.
 \end{proof}
 
 $\ $
 
\begin{rem}
In the proof of Theorem \ref{theoNSM}, we actually show that $\vec U_f\in \mathcal{C}_{\rm per}\dot H^{1/2}\cap L^2_{\rm per} \dot H^{3/2+\delta}$ and, $\vec E\in \mathcal{C}_{\rm per}H^{1/2+\delta}$ and $\vec B\in \mathcal{C}_{\rm per}H^{1/2+\delta}$ while $\vec U_0\in\dot B^{1/2}_{2,\infty}\cap \dot H^{3/2+\delta}$. Thus the most inconvenient term to deal with is thus the mean value $\vec U_0=\frac{1}{T}\int_0^T \vec u(t,.)\, dt$.
\end{rem} 
 
 \subsection{Proof of Theorem \ref{MAIN1}}
In this part, we only sketch the proof  Theorem \ref{MAIN1} as the steps are basically similar to those of Theorem \ref{theoNSM}.
First, we shall adjust the previous spaces and define  
\begin{itemize}
\item $(\vec u,\vec E,\vec B)\in \tilde{\mathbb{X}} $ if   \begin{enumerate}
 \item $\vec u $ belongs to $\tilde L^\infty_{\rm per} \dot B^{\frac{1}{2}}_{2,(\infty,1)}\cap\tilde L^2_{\rm per} \dot  B^\frac{3}{2}_{2,(\infty,1)}$
 \item $\vec E $ and $\vec B $ belong  to $ \tilde L^\infty_{\rm per} H^\frac{1}{2}$.
 \end{enumerate}
\item $(\vec F,\vec G,\vec H)\in \tilde{\mathbb{Y}}$ if 
\begin{enumerate}
\item $\vec F $ belongs to $\tilde L^2_{\rm per} \dot B^{-\frac{1}{2}} _{2,(\infty,1)}$
\item  the mean value  $\vec F_0=\frac{1}{T}\int_0^T \vec F (t,.)\, dt$   belongs  to $ \dot B^{-\frac{3}{2}}_{2,(\infty,1)}$
 \item $\vec G $ belongs to $  L^2_{\rm per} H^\frac{1}{2}$ 
 \item the mean value  $\vec G_0=\frac{1}{T}\int_0^T \vec G (t,.)\, dt$   belongs to $\dot H^{-1}$
\item $\vec H $ is divergence-free ($\text{\rm div }\vec H =0$)  and  $\vec H $ belongs to $  L^2_{\rm per} H^\frac{1}{2}$  
\item the mean value   $\vec H_0=\frac{1}{T}\int_0^T \vec H (t,.)\, dt$ belongs to $\dot H^{-2}$. \end{enumerate}
\end{itemize}
Lemma \ref{law prod lem} can be extended to the following result in the case of critical Besov spaces. Again, it is sufficient to treat point-wise estimates (at fixed time).
\begin{lemma}\label{law prod lem Bes}
\begin{eqnarray}
H^\frac12\times H^\frac12 \hookrightarrow \dot B^{-\frac{1}{2}} _{2,1} \hookrightarrow \dot B^{-\frac{1}{2}} _{2,(\infty,1)}\\
\left(\dot B^{\frac{1}{2}} _{2,(\infty,1)}\cap \dot B^{\frac{3}{2}} _{2,(\infty,1)}\right)\times H^\frac12\hookrightarrow H^\frac12\\
\left(\dot B^{\frac{1}{2}} _{2,(\infty,1)}\cap \dot B^{\frac{3}{2}} _{2,(\infty,1)}\right)\times \left(\dot B^{\frac{1}{2}} _{2,(\infty,1)}\cap \dot B^{\frac{3}{2}} _{2,(\infty,1)}\right)\hookrightarrow \dot B^{\frac{1}{2}} _{2,(\infty,1)}.
\label{NS est}
\end{eqnarray}
\end{lemma}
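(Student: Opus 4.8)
The three embeddings of Lemma \ref{law prod lem Bes} are all purely spatial (fixed-time) statements about the hybrid Besov spaces $\dot{\mathcal B}^{(s_1,s_2)}_{2,(q_1,q_2)}$, so the plan is to reduce each of them to classical product rules in homogeneous Besov spaces, handled separately on the low-frequency range $k\le 0$ and the high-frequency range $k>0$, exactly as the definition of $\dot{\mathcal B}^{(s_1,s_2)}_{2,(q_1,q_2)}$ prescribes. Throughout I would use Bony's decomposition $uv=T_uv+T_vu+R(u,v)$ and the standard facts that $T:\dot B^{\sigma_1}_{2,r}\times \dot B^{\sigma_2}_{\infty,\infty}\to\dot B^{\sigma_1+\sigma_2}_{2,r}$ when $\sigma_2<0$ (paraproduct with a negative-regularity factor), $T:\dot B^{\sigma_1}_{2,r}\times \dot B^{\sigma_2}_{\infty,\infty}\to \dot B^{\sigma_1+\sigma_2}_{2,r}$ for $\sigma_2\ge 0$ with a logarithmic-free bound when one is careful, and the remainder bound $R:\dot B^{\sigma_1}_{2,\infty}\times\dot B^{\sigma_2}_{2,\infty}\to\dot B^{\sigma_1+\sigma_2-3/2}_{2,1}$ valid when $\sigma_1+\sigma_2>0$ (in dimension $d=3$; the gain $3/2=d/2$ and the summability upgrade from $\infty$ to $1$ are the crucial points). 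I would also use freely the embeddings $H^{1/2}\hookrightarrow\dot B^{1/2}_{2,2}$ and, on the low-frequency side, $\dot B^{1/2}_{2,(\infty,1)}$ restricted to $k\le0$ is just $\ell^\infty$-summable dyadic blocks with weight $2^{k/2}$, while $H^{1/2}$ restricted to $k\le 0$ controls the $\ell^2$-sum of the blocks with weight (essentially) $2^{k/2}$ as well — so on low frequencies $H^{1/2}$ is at least as strong as $\dot B^{1/2}_{2,(\infty,1)}$ as a starting point but one still has to land in the right target.

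\textbf{Embedding (the $H^{1/2}\times H^{1/2}\hookrightarrow\dot B^{-1/2}_{2,1}\hookrightarrow\dot B^{-1/2}_{2,(\infty,1)}$ line).} The second inclusion is trivial: $\dot B^{-1/2}_{2,1}=\dot{\mathcal B}^{(-1/2,-1/2)}_{2,(1,1)}$ embeds into $\dot{\mathcal B}^{(-1/2,-1/2)}_{2,(\infty,1)}$ because on $k\le 0$ one weakens $\ell^1$ to $\ell^\infty$. For the first inclusion, write $uv$ via Bony. With both factors in $H^{1/2}\hookrightarrow \dot B^{1/2}_{2,2}\cap \dot B^{0}_{\infty,2}\cap \dot B^{-1}_{\infty,\infty}$ (using $\dot B^{1/2}_{2,2}\hookrightarrow \dot B^{-1}_{\infty,\infty}$ via Bernstein in $d=3$), the two paraproducts map into $\dot B^{1/2-1}_{2,2}=\dot B^{-1/2}_{2,2}$ and the remainder $R(u,v)$ maps $\dot B^{1/2}_{2,2}\times\dot B^{1/2}_{2,2}\to \dot B^{1/2+1/2-3/2}_{2,1}=\dot B^{-1/2}_{2,1}$. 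The remainder term already lands in the desired $\dot B^{-1/2}_{2,1}$; the paraproduct terms land in $\dot B^{-1/2}_{2,2}\hookrightarrow\dot B^{-1/2}_{2,(\infty,1)}$? — careful: $\dot B^{-1/2}_{2,2}\not\hookrightarrow \dot B^{-1/2}_{2,1}$ in general, only $\hookrightarrow \dot B^{-1/2}_{2,\infty}$. This is the point requiring care: one must instead estimate the paraproducts using the $L^2$-bilinear structure more sharply, e.g. $S_{k-1}u\,\Delta_k v$ has $L^2$-norm $\lesssim \|S_{k-1}u\|_{L^\infty}\|\Delta_k v\|_{L^2}$ and, since $\|S_{k-1}u\|_{L^\infty}\lesssim \sum_{j<k}2^{3j/2}\|\Delta_j u\|_{L^2}$, one recovers $2^{-k/2}\|\Delta_k(T_uv)\|_{L^2}\in\ell^1$ precisely because $H^{1/2}$ control gives $\sum_j 2^{j/2}\|\Delta_j u\|_{L^2}$ finite after one more Cauchy–Schwarz against $\|\Delta_k v\|_{L^2}$ summed in $\ell^2$. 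I expect this bookkeeping to be the only delicate part of this first line, and it is exactly the usual $\dot H^{1/2}\cdot\dot H^{1/2}\hookrightarrow \dot B^{-1/2}_{2,1}$ endpoint computation.

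\textbf{Embeddings (\ref{NS est}) and the mixed one.} For $\left(\dot B^{1/2}_{2,(\infty,1)}\cap\dot B^{3/2}_{2,(\infty,1)}\right)\times H^{1/2}\hookrightarrow H^{1/2}$ and the $\cap\hookrightarrow\dot B^{1/2}_{2,(\infty,1)}$ line, the key observation (already used by the authors in Lemma \ref{law prod lem}, replacing $\dot H^{3/2+\delta}$ by $\dot B^{3/2}_{2,1}$) is that the intersection space embeds into $\dot B^{3/2}_{2,1}\cap\dot B^{1/2}_{2,1}$ — indeed on high frequencies $\dot B^{3/2}_{2,(\infty,1)}$ is literally $\dot B^{3/2}_{2,1}$ and controls lower-regularity high-frequency norms, while on low frequencies $\dot B^{1/2}_{2,(\infty,1)}$ is $\ell^\infty$ but the summable tail from $\dot B^{3/2}_{2,(\infty,1)}$ on $k\le 0$... — here again one must be slightly careful, but in fact a function in $\dot B^{1/2}_{2,(\infty,1)}\cap\dot B^{3/2}_{2,(\infty,1)}$ has low-frequency blocks bounded by $\min(2^{-k/2}, 2^{-3k/2})\cdot(\text{const})$ which is $\ell^1$-summable over $k\le 0$, so the intersection lies in $\dot B^{1/2}_{2,1}$, and likewise in $\dot B^{3/2}_{2,1}$. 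Since $\dot B^{3/2}_{2,1}\hookrightarrow L^\infty$ and is an algebra-type multiplier on $H^{1/2}$ (this is the classical $\dot B^{d/2}_{2,1}\cdot H^s\hookrightarrow H^s$ for $|s|\le d/2$), the second embedding follows; the third follows from $\dot B^{3/2}_{2,1}\cdot\dot B^{1/2}_{2,1}\hookrightarrow\dot B^{1/2}_{2,1}$ (again multiplier by $\dot B^{d/2}_{2,1}$) plus the trivial $\dot B^{1/2}_{2,1}\hookrightarrow\dot B^{1/2}_{2,(\infty,1)}$, after splitting output frequencies into $k\le 0$ and $k>0$. \textbf{Main obstacle.} The one genuinely non-routine point is that the hybrid norm mixes an $\ell^\infty$ low-frequency sum with an $\ell^1$ high-frequency sum, so the usual product estimates — which are cleanest in $\dot B^s_{2,1}$ — have to be re-examined to confirm no logarithmic loss appears at the frequency-zero transition; the remedy, as above, is that membership in the \emph{intersection} $\dot B^{1/2}_{2,(\infty,1)}\cap\dot B^{3/2}_{2,(\infty,1)}$ upgrades the low-frequency summability from $\ell^\infty$ to $\ell^1$ for free (two different slopes pinch the dyadic profile), which collapses everything back to the classical $\dot B^s_{2,1}$ product rules and makes the whole lemma a bookkeeping exercise rather than a new estimate.
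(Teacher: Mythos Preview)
Your reduction for the second and third embeddings contains a genuine error. You claim that the intersection $\dot B^{1/2}_{2,(\infty,1)}\cap\dot B^{3/2}_{2,(\infty,1)}$ embeds into $\dot B^{1/2}_{2,1}$, arguing that ``two different slopes pinch the dyadic profile''. This is false. For $k\le 0$ the bound coming from $\dot B^{3/2}_{2,(\infty,1)}$ is \emph{weaker} than the one from $\dot B^{1/2}_{2,(\infty,1)}$ (since $2^{3k/2}\le 2^{k/2}$), so the two constraints do not pinch at all: the best you get on low frequencies is $\|\Delta_k u\|_{L^2}\lesssim 2^{-k/2}$, and then $\sum_{k\le 0}2^{k/2}\|\Delta_k u\|_{L^2}$ need not converge. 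Concretely, take $\|\Delta_k u\|_{L^2}=2^{-k/2}$ for $k\le 0$ and $0$ for $k>0$: this function is in the intersection (both hybrid norms equal $1$) but has infinite $\dot B^{1/2}_{2,1}$ norm. What \emph{is} true --- and what you implicitly use for the $L^\infty$ embedding --- is that the intersection sits in $\dot B^{3/2}_{2,1}$, because on low frequencies $2^{3k/2}\|\Delta_k u\|_{L^2}\lesssim 2^{k}$, which is summable.

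This partial embedding salvages the second line (since $\dot B^{3/2}_{2,1}$ is a multiplier on $H^{1/2}$), but it does not directly give the third: you would need $\dot B^{3/2}_{2,1}\times \dot B^{1/2}_{2,(\infty,1)}\hookrightarrow \dot B^{1/2}_{2,(\infty,1)}$, which is a genuinely hybrid statement --- the product mixes low and high output frequencies, so you cannot simply quote the classical $\dot B^{3/2}_{2,1}$ multiplier rule separately on each range. The paper does not attempt such a reduction. Instead it works directly in the hybrid spaces: after recording the $L^\infty$ embedding, it runs Bony's decomposition and estimates $T_uB$, $T_Bu$, $R(u,B)$ block by block, splitting each into the cases $k\le 0$ and $k\ge 1$ and tracking the $\ell^\infty$ versus $\ell^1$ summability by hand. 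This is exactly the ``bookkeeping at the frequency-zero transition'' you flagged as the main obstacle, but it cannot be bypassed by your proposed embedding into $\dot B^{1/2}_{2,1}$.
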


\begin{proof}
The first product is classical and we omit it here.  To prove the other two, we first observe that
$$
\left(\dot B^{\frac{1}{2}} _{2,(\infty,1)}\cap \dot B^{\frac{3}{2}} _{2,(\infty,1)}\right)\hookrightarrow L^\infty.
$$
Let $u\in \dot B^{\frac{1}{2}} _{2,(\infty,1)}\cap \dot B^{\frac{3}{2}} _{2,(\infty,1)}$, and $B\in H^\frac12$. 
We begin by estimating the term $T_uB$ in the para-product. We have
$$
\|\Delta_qT_uB\|_{L^2}\leq \|u\|_{L^\infty}\|\Delta_qB\|_{L^2}\lesssim\|u\|_{\dot B^{\frac{1}{2}} _{2,(\infty,1)}\cap \dot B^{\frac{3}{2}} _{2,(\infty,1)}}\|\Delta_qB\|_{L^2}
$$
giving 
$$
\|T_uB\|_{H^\frac12}\lesssim \|u\|_{\dot B^{\frac{1}{2}} _{2,(\infty,1)}\cap \dot B^{\frac{3}{2}} _{2,(\infty,1)}} \|B\|_{H^\frac12}.
$$
When $k\leq0$, by Bernstein's lemma we have 
\begin{eqnarray*}
2^\frac k2\sum_{j\leq k}2^{\frac{3j}2}\|\Delta_jB\|_{L^2}\|\Delta_ku\|_{L^2}&\leq &\sum_{j\leq k}2^{\frac j2}\|\Delta_jB\|_{L^2}2^{j-k}  2^{\frac{3k}2}  \|\Delta_ku\|_{L^2}\\
&\lesssim&
\|u\|_{\dot B^{\frac{1}{2}} _{2,(\infty,1)}} \sum_{j\leq k}2^{\frac j2}\|\Delta_jB\|_{L^2}2^{j-k}
\end{eqnarray*}
For $k\geq1$, we decompose further as follows
\begin{eqnarray}
\sum_{j\leq k}2^{\frac{3j}2}\|\Delta_jB\|_{L^2}\|\Delta_ku\|_{L^2}&\leq &\sum_{j\leq0}(\cdot)+\sum_{0\leq j\leq k}(\cdot)
\end{eqnarray}
and estimate the terms as below
$$
2^\frac k2\sum_{j\leq0}(\cdot)\lesssim \|B\|_{H^\frac12} 2^\frac k2\|\Delta_ku\|_{L^2}
$$
and
$$
2^\frac k2\sum_{0\leq j\leq k}(\cdot)\lesssim 2^\frac{3k}2\|\Delta_ku\|_{L^2} \sum_{0\leq j\leq k} 2^\frac j2\|\Delta_jB\|_{L^2}  2^{j-k} .
$$
Using Young's inequality, we conclude that
$$
\|T_Bu\|_{H^\frac12}\lesssim \|u\|_{\dot B^{\frac{1}{2}} _{2,(\infty,1)}\cap \dot B^{\frac{3}{2}} _{2,(\infty,1)}} \|B\|_{H^\frac12}.
$$
Finally we estimate the remainder term $R(u,B)$ only when $k\leq0$ because the case $k\geq1$ is easier. By Bernstein's lemma we have 
\begin{eqnarray*}
2^\frac k2\sum_{j\geq k-2}2^{\frac{3q}2}\|\Delta_jB\|_{L^2}\|\Delta_ju\|_{L^2}&\leq &2^{2k}\sum_{j\leq 0}\|\Delta_ju\|_{L^2}\|\Delta_jB\|_{L^2}\\
&+&2^{2k}\sum_{k-2\leq j\leq0}\|\Delta_jB\|_{L^2}  \|\Delta_ju\|_{L^2}\\
&\lesssim&
\|u\|_{\dot B^{\frac{1}{2}} _{2,(\infty,1)}} \|B\|_{L^2}2^{2k}\\&+&\sum_{j\geq k-2}2^{\frac{3j}2}\|\Delta_ju\|_{L^2} 2^\frac j2 \|\Delta_jB\|_{L^2}2^{2(k-j)}.
\end{eqnarray*}
Thus we conclude that 
$$
\|R(u,B)\|_{H^\frac12}\lesssim \|u\|_{\dot B^{\frac{1}{2}} _{2,(\infty,1)}\cap \dot B^{\frac{3}{2}} _{2,(\infty,1)}} \|B\|_{H^\frac12},
$$
as required. The proof of estimate \eqref{NS est} is similar. As before we have
$$
\|\Delta_qT_uu\|_{L^2}\leq \|u\|_{L^\infty}\|\Delta_qu\|_{L^2}\lesssim\|u\|_{\dot B^{\frac{1}{2}} _{2,(\infty,1)}\cap \dot B^{\frac{3}{2}} _{2,(\infty,1)}}\|\Delta_qu\|_{L^2}
$$
giving 
$$
\|T_uu\|_{\dot B^\frac12_{2,(\infty,1)}}\lesssim \|u\|_{\dot B^{\frac{1}{2}} _{2,(\infty,1)}\cap \dot B^{\frac{3}{2}} _{2,(\infty,1)}} \|u\|_{\dot B^{\frac{1}{2}} _{2,(\infty,1)}}.
$$
Next we estimate the remainder term only for $k\leq0$, by Bernstein's lemma we have 
\begin{eqnarray*}
2^\frac k2\sum_{j\geq k-2}2^{\frac{3q}2}\|\Delta_ju\|_{L^2}\|\Delta_ju\|_{L^2}&\leq &2^{2k}\sum_{j\leq 0}\|\Delta_ju\|_{L^2}\|\Delta_ju\|_{L^2}\\
&+&2^{2k}\sum_{k-2\leq j\leq0}\|\Delta_ju\|_{L^2}  \|\Delta_ju\|_{L^2}\\
&\lesssim&
\|u\|_{\dot B^{\frac{1}{2}} _{2,(\infty,1)}}^2 2^{2k}\\&+&\sum_{j\geq k-2}2^{\frac{3j}2}\|\Delta_ju\|_{L^2} 2^\frac j2 \|\Delta_ju\|_{L^2}2^{2(k-j)}.
\end{eqnarray*}
Consequently, we have
$$
\|R(u,u)\|_{H^\frac12}\lesssim \|u\|_{\dot B^{\frac{1}{2}} _{2,(\infty,1)}\cap \dot B^{\frac{3}{2}} _{2,(\infty,1)}}^2.
$$
as desired.
\end{proof}
Now, we give a result equivalent to Lemma \ref{lemXY}.

\begin{lemma}\label{lem tildeXY}$\ $\\
 Let  $(\vec F,\vec G,\vec H)\in \tilde{\mathbb{Y}}$.  Then the time-periodic solution $\vec\Gamma:=(\vec u,\vec E,\vec B)$ of the system
 \begin{equation} \label{syst2}
\left\{\begin{split} \partial_t \vec  u-  \Delta\vec u&= \vec F \\  \partial_t\vec E-\vec\nabla\wedge\vec B +\vec  E &=\vec G
\\ \partial_t\vec B +\vec\nabla\wedge \vec E &=\vec H 
\\   \text{\rm  div }\vec  B &=0 
\end{split}\right.\end{equation}
 satisfies $(\vec u,\vec E,\vec B)\in\tilde{\mathbb{X}}$.
 \end{lemma}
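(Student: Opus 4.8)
The plan is to rerun the Fourier-in-time argument used for Lemma~\ref{lemXY}. Expanding $\vec F,\vec G,\vec H,\vec u,\vec E,\vec B$ in $T$-periodic time Fourier series and solving the decoupled system \eqref{syst2} frequency by frequency produces exactly the same explicit formulas for $\hat U_k$, $\hat B_k$, $\hat E_{k,\sigma}$, $\hat E_{k,\nabla}$ as in the proof of Lemma~\ref{lemXY}. One then estimates the time-mean ($k=0$) and the fluctuation ($k\neq 0$) parts separately; the only genuinely new feature compared with the Sobolev case is that one now localizes in space through the blocks $\Delta_q$ and must carry along the low/high-frequency $(\infty,1)$-splitting and the Chemin--Lerner time integrability defining $\tilde L^r_{\rm per}$.

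For the means, the first observation is that $\vec F_0$ automatically belongs to $\dot B^{-1/2}_{2,(\infty,1)}$ — by Minkowski's inequality $\|\vec F_0\|_{\dot B^{-1/2}_{2,(\infty,1)}}\lesssim T^{-1/2}\|\vec F\|_{\tilde L^2_{\rm per}\dot B^{-1/2}_{2,(\infty,1)}}$ — and, likewise, $\vec G_0,\vec H_0\in H^{1/2}$. Combining this with the hypotheses $\vec F_0\in\dot B^{-3/2}_{2,(\infty,1)}$, $\vec G_0\in\dot H^{-1}$, $\vec H_0\in\dot H^{-2}$, the identities $\vec U_0=-\Delta^{-1}\vec F_0$, $\vec B_0=\Delta^{-1}\vec\nabla\wedge\vec G_0-\Delta^{-1}\vec H_0$ and $\vec E_0=\vec G_{0,\nabla}-\Delta^{-1}\vec\nabla\wedge\vec H_0$ then give $\vec U_0\in\dot B^{1/2}_{2,(\infty,1)}\cap\dot B^{3/2}_{2,(\infty,1)}$ and $\vec E_0,\vec B_0\in H^{1/2}$; being constant in time these land in the required $\tilde L^\infty_{\rm per}$ and $\tilde L^2_{\rm per}$ spaces. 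The negative-order assumptions on $\vec F_0,\vec G_0,\vec H_0$ are precisely what controls the low-frequency behaviour of these inverse-Laplacian expressions.

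For the fluctuations I would work one block $\Delta_q$ at a time, so that $|\xi|\sim 2^q$ on the relevant support. For the $\tilde L^2_{\rm per}$ norms, Parseval in time ($\|\Delta_q\vec U_f\|_{L^2_{\rm per}L^2}^2=T\sum_{k\neq 0}\|\Delta_q\vec U_k\|_{L^2}^2$) together with the crude bound $|(|\xi|^2+i\tfrac{2\pi}{T}k)^{-1}|\leq|\xi|^{-2}$ yields $2^{3q/2}\|\Delta_q\vec U_f\|_{L^2_{\rm per}L^2}\lesssim 2^{-q/2}\|\Delta_q\vec F_f\|_{L^2_{\rm per}L^2}$, hence, summing in $q$ with the $(\infty,1)$ weights, $\|\vec U_f\|_{\tilde L^2_{\rm per}\dot B^{3/2}_{2,(\infty,1)}}\lesssim\|\vec F_f\|_{\tilde L^2_{\rm per}\dot B^{-1/2}_{2,(\infty,1)}}$. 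For the $\tilde L^\infty_{\rm per}$ norms I would first bound $|\sum_{k\neq 0}\hat U_k(\xi)e^{i\frac{2\pi}{T}kt}|$ pointwise in $\xi$ by Cauchy--Schwarz in $k$, exactly as in Lemma~\ref{lemXY}, picking up the multiplier sums $A_T,B_T,C_T,D_T$; these are finite \emph{and uniform in $\xi$} — this is the content of the elementary estimates $\alpha_0,\beta_0<\infty$ proved there — so they factor out of the spatial integral. This leaves $\sup_t\|\Delta_q\vec U_f(t)\|_{L^2}\lesssim 2^{-q}\|\Delta_q\vec F_f\|_{L^2_{\rm per}L^2}$ and, for the Maxwell part (absorbing the $\vec\xi\wedge\hat G_k$ and $(1+i\tfrac{2\pi}{T}k)\hat H_k$ numerators into the resolvent bounds $|\,|\xi|^2-\tfrac{4\pi^2}{T^2}k^2+i\tfrac{2\pi}{T}k\,|\geq\max(\tfrac{2\pi}{T}|k|,\tfrac34|\xi|^2)$ valid for $|\xi|>\tfrac{4\pi}{T}|k|$), $\sup_t\big(\|\Delta_q\vec E_f(t)\|_{L^2}+\|\Delta_q\vec B_f(t)\|_{L^2}\big)\lesssim\|\Delta_q\vec G_f\|_{L^2_{\rm per}L^2}+\|\Delta_q\vec H_f\|_{L^2_{\rm per}L^2}$. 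Summing in $q$, and using that for an $\ell^2$-based space the Chemin--Lerner norm $\tilde L^2_{\rm per}H^{1/2}$ coincides with $L^2_{\rm per}H^{1/2}$, this gives $\vec U_f\in\tilde L^\infty_{\rm per}\dot B^{1/2}_{2,(\infty,1)}$ and $\vec E_f,\vec B_f\in\tilde L^\infty_{\rm per}H^{1/2}$. For the Maxwell components one may equally well simply rerun the energy-type argument (the second proof of Lemma~\ref{lemXY}), which already operates at the level of $H^{1/2}$ and needs no extra smoothness.

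The main obstacle here is not analytic depth — everything reduces to the scalar Fourier-multiplier estimates already carried out in Lemma~\ref{lemXY} — but bookkeeping: one must verify that the low/high-frequency $(\infty,1)$-splitting and the $\tilde L^r_{\rm per}$ time integrability really survive each of the explicit multipliers, which works only because every multiplier bound used is uniform in the spatial frequency. The one genuinely delicate point is the time-mean $\vec U_0=-\Delta^{-1}\vec F_0$, flagged by the authors as ``the most inconvenient term'': the assumption $\vec F_0\in\dot B^{-3/2}_{2,(\infty,1)}$ alone yields $\vec U_0\in\dot B^{1/2}_{2,(\infty,1)}$, enough for the $\tilde L^\infty_{\rm per}\dot B^{1/2}_{2,(\infty,1)}$ component but \emph{not} for the $\tilde L^2_{\rm per}\dot B^{3/2}_{2,(\infty,1)}$ one; closing that estimate requires additionally invoking the automatic embedding $\vec F_0\in\dot B^{-1/2}_{2,(\infty,1)}$ inherited from $\vec F\in\tilde L^2_{\rm per}\dot B^{-1/2}_{2,(\infty,1)}$, which upgrades $\vec U_0$ to $\dot B^{3/2}_{2,(\infty,1)}$.
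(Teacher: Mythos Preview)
Your proposal is correct and follows essentially the same approach as the paper's own proof: both expand in time Fourier series, use the explicit formulas for $\hat U_k$, $\hat B_k$, $\hat E_{k,\sigma}$, $\hat E_{k,\nabla}$ from Lemma~\ref{lemXY}, and then estimate block by block via $\Delta_q$ using the crude multiplier bound $\tfrac{2^{4q}}{2^{4q}+k^2/T^2}\le 1$. The paper's proof is abbreviated (it only writes out the $\tilde L^2_{\rm per}\dot B^{3/2}_{2,(\infty,1)}$ estimate for $\vec U_f$ and declares the rest ``similar''), whereas you spell out the treatment of the means, the $\tilde L^\infty_{\rm per}$ fluctuation estimates via the $A_T,\dots,D_T$ sums, and correctly flag the role of the automatic inclusion $\vec F_0\in\dot B^{-1/2}_{2,(\infty,1)}$ in placing $\vec U_0$ into $\dot B^{3/2}_{2,(\infty,1)}$.
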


\begin{proof}
We only estimate the solution $\hat U_k(\xi)= \frac{1}{\vert\xi\vert^2+i\frac{2\pi}{T}k}\hat F_k(\xi) $ in $\tilde L^2(\dot B^\frac32_{2,(\infty,1)})$, as all the other estimates are similar. First, we have
$$
\|\Delta_q\vec U\|_{L^2\left(0,T;L^2\right)}\lesssim \sqrt{\sum_{k\in\mathbb Z\setminus0} \frac{\|\Delta_q\vec F_k\|_{L^2}^2}{2^{4q}+\frac{k^2}{T^2}}}.
$$
Hence,
\begin{eqnarray*}
\sup_{q\leq0}2^\frac{3q}2\|\Delta_q\vec U\|_{L^2\left(0,T;L^2\right)}&\lesssim& \sup_{q\leq0}2^{-\frac q2}\sqrt{\sum_{k\in\mathbb Z}\|\Delta_q\vec F_k\|_{L^2}^2} \sqrt{\sup_{q\leq0}\sup_{k\in\mathbb Z\setminus0}\frac{2^{4q}}{2^{4q}+\frac{k^2}{T^2}}}\\
&\lesssim& \sup_{q\leq0}2^{-\frac q2}\sqrt{\sum_{k\in\mathbb Z\setminus0}\|\Delta_q\vec F_k\|_{L^2}^2}\lesssim\|F\|_{\tilde L^2(\dot B^{-\frac12}_{2,(\infty,1)})}
,
\end{eqnarray*}
and similarly
\begin{eqnarray*}
\sum_{q\geq1}2^\frac {3q}2\|\Delta_q\vec U\|_{L^2\left((0,T);L^2\right)}&\lesssim& \sum_{q\leq0}2^{-\frac q2}\sqrt{\sum_{k\in\mathbb Z\setminus0}\|\Delta_q\vec F_k\|_{L^2}^2} \sqrt{\sup_{q\leq0}\sup_{k\in\mathbb Z\setminus0}\frac{2^{4q}}{2^{4q}+\frac{k^2}{T^2}}}\\
&\lesssim& \sum_{q\geq1}2^{-\frac q2}\sqrt{\sum_{k\in\mathbb Z\setminus0}\|\Delta_q\vec F_k\|_{L^2}^2}\lesssim\|F\|_{\tilde L^2(\dot B^{-\frac12}_{2,(\infty,1)})}.
\end{eqnarray*}
\end{proof}

\section{Asymptotic stability}
The purpose of this section is to prove Theorem \ref{MAIN2}. In the sequel, we omit the $\vec{ }$ symbol in order to alleviate the notation, since we will use also $\hat{ }$ for the Fourier transform, $\tilde{}$ etc.\\
Denote by $\Gamma_{\text{per}}$ the $T$-periodic small solution of \eqref{eqns} given by Theorem \ref{MAIN1}. We decompose $\bar\Gamma$ the solution of \eqref{eqns} as
\begin{eqnarray}
\nonumber
\bar\Gamma&:=&\Gamma_{\text{per}}+\Gamma_{\rm err}
\label{decomp}
\end{eqnarray}
where the ``error" term $\Gamma_{\rm err}$ is further decomposed as $\Gamma_{\rm err}=e^{t{\mathcal A}}\Gamma^0_{\rm err}+\Gamma$. We assume that the initial data $\Gamma^0_{\rm err}$ is small in the space 
$   \dot{\mathcal B}^\frac12_{2,(\infty,1)}\times H^\frac12\times H^\frac12  $, and $\Gamma^0=0$. 
It is easy to see that $\Gamma_{\rm err}$ solves the following integral equation

$$
\Gamma_{\rm err}(t)=e^{t{\mathcal A}}\Gamma^0_{\rm err}+\int_0^te^{(t-t'){\mathcal A}}{\mathcal N}(\Gamma_{\rm err}(t'))\;dt',
$$
with
$$
{\mathcal A}=\left(
\begin{array}{ccc}
\Delta&0&0\\
0&-I&\nabla\wedge\cdot \\0&-\nabla\wedge\cdot &0
\end{array}
\right)
$$
and the three components of the nonlinearity ${\mathcal N}=({\mathcal N}_1,{\mathcal N}_2 ,{\mathcal N}_3 )$ are 
\begin{eqnarray*}
{\mathcal N}_1=\mathbb{P}\Big(-\nabla(u\otimes u)&-&\nabla(u_{\rm per}\otimes u)-\nabla(u\otimes u_{\rm per})\\
 &+&E\times B+E\times B_{\rm per}+ E_{\rm per}\times  B\\
&+&(u\times B)\times B+(u\times B_{\rm per})\times B_{\rm per}\\
&+&[u_{\rm per} \times B]\times B_{\rm per}+[ u_{\rm per} \times B_{\rm per}]\times B\Big),
\end{eqnarray*}

$$
{\mathcal N}_2=u\times B+u\times B_{\rm per}+  u_{\rm per}\times B
$$
and ${\mathcal N}_3=0$, respectively. Observe that the nonlinear term is expressed only in terms of the periodic solution. The construction of $\Gamma$ follows a standard fixed point argument. \\
Let $B_{\delta}$ be the ball of the space ${\mathcal X}$ 
centred at  zero and with radius $\delta>0$ to be chosen. 
On that ball, define the map $\Phi$  as follows
\begin{eqnarray}
\nonumber
 \Phi:B_{\delta}\subset{\mathcal X}&\longrightarrow&{\mathcal X}\\
 \label{contraction map}
\Gamma&\mapsto& \Phi(\Gamma):=\int_0^te^{(t-t'){\mathcal A}}{\mathcal N}
(e^{t'{\mathcal A}}\Gamma^0_{\rm err}+\Gamma(t'))\;dt'.
\end{eqnarray}
Hence, the result of Theorem \ref{MAIN2} will be a consequence of the following proposition. 

\begin{proposition}
\label{contraction}
If $\|\Gamma^0_{\rm err}\|_{\dot {\mathcal B}^{\frac12}_{2,(\infty,1)}\times H^{\frac12}
\times H^{\frac12}}\leq \kappa \delta$, with $\delta>0$ and $\kappa>0$
sufficiently small, then the map $\Phi$ is a contraction on $B_{\delta}$. 
\end{proposition}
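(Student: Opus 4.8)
The plan is to prove Proposition \ref{contraction} by the standard Banach fixed-point strategy: show that $\Phi$ maps $B_\delta$ into itself and is a contraction there, both following from multilinear estimates in the space $\mathcal X$. First I would record the relevant linear estimates for the semigroup $e^{t\mathcal A}$: the heat part $e^{t\Delta}$ acting on the velocity component, and the weakly damped Maxwell evolution acting on $(E,B)$. These provide, on the one hand, the homogeneous bound $\|e^{t\mathcal A}\Gamma^0_{\rm err}\|_{\mathcal X}\lesssim \|\Gamma^0_{\rm err}\|_{\dot{\mathcal B}^{1/2}_{2,(\infty,1)}\times H^{1/2}\times H^{1/2}}$ (this is where the averaged-in-time norms and the $(1-\varepsilon)/2$ decay rate come in, using the maximal regularity result and the spectral decay of the damped Maxwell system announced in Section 3 and the Appendix), and on the other hand the Duhamel bound $\|\int_0^t e^{(t-t')\mathcal A}\mathcal N\,dt'\|_{\mathcal X}\lesssim \|\mathcal N\|_{\mathcal Y_1\times\mathcal Y_2\times\{0\}}$, matching the ``dual'' spaces $\mathcal Y_1,\mathcal Y_2$ to $\mathcal X_1,\mathcal X_2$ precisely because of the maximal-regularity gain of two derivatives (resp. one derivative) and the compatibility of the time weights.

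Next I would estimate $\mathcal N(e^{t'\mathcal A}\Gamma^0_{\rm err}+\Gamma(t'))$ in $\mathcal Y_1\times\mathcal Y_2\times\{0\}$ for $\Gamma\in B_\delta$. Writing $w:=e^{t'\mathcal A}\Gamma^0_{\rm err}+\Gamma$, the terms split into three families: (i) purely quadratic/cubic terms in $w$ (i.e. $\nabla(u\otimes u)$, $E\times B$, $(u\times B)\times B$), which are handled by the product rules of Lemma \ref{law prod lem Bes} together with the time-decay bookkeeping — each factor of $w$ carries decay $(t+1)^{-(1-\varepsilon)/2}$, so a product of two carries $(t+1)^{-(1-\varepsilon)}$, which is integrable enough against the Duhamel kernel to land back in $\mathcal X$; (ii) the mixed terms linear in $w$ with one factor of $\Gamma_{\rm per}$ (e.g. $\nabla(u_{\rm per}\otimes u)$, $E\times B_{\rm per}$, $(u\times B_{\rm per})\times B_{\rm per}$), where only one factor decays but $\Gamma_{\rm per}$ is bounded uniformly in time by Theorem \ref{MAIN1} (in the $\tilde L^\infty_{\rm per}\dot{\mathcal B}^{1/2}_{2,(\infty,1)}\cap\tilde L^2_{\rm per}\dot{\mathcal B}^{3/2}_{2,(\infty,1)}$ and $\tilde L^\infty_{\rm per}H^{1/2}$ norms), so the product still gains the single decay factor $(t+1)^{-(1-\varepsilon)/2}$ needed; and (iii) the genuinely bad term $[u_{\rm per}\times B_{\rm per}]\times B$ (and its companion $[u_{\rm per}\times B]\times B_{\rm per}$) from \eqref{bad term}, where two of the three factors carry no decay and one barely misses an $L^\infty$ bound. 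Here I would use the strong control of the low frequencies of $u_{\rm per}$ encoded in the $\dot{\mathcal B}^{1/2}_{2,(\infty,1)}$ index (which does embed into $L^\infty$ via the $\ell^1$ summation of high frequencies), pay the $\varepsilon$-loss in the decay rate, and invoke the refined nonlinear estimates of Section 3 — the authors explicitly flag these as ``the worst two'' and prove them there.

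Finally, collecting all the estimates yields $\|\Phi(\Gamma)\|_{\mathcal X}\le C_1\|\Gamma^0_{\rm err}\|+C_2(\delta^2+\delta\|\Gamma_{\rm per}\|_{\mathcal X}+\|\Gamma_{\rm per}\|_{\mathcal X}^2\cdot\text{(stuff)})$; choosing $\varepsilon_T$ in Theorem \ref{MAIN1} small makes the $\Gamma_{\rm per}$-dependent constants small, choosing $\delta$ small absorbs the quadratic terms, and choosing $\kappa$ small handles the homogeneous term, so $\Phi(B_\delta)\subset B_\delta$. For the contraction property, $\mathcal N$ is polynomial in its argument, so $\mathcal N(w_1)-\mathcal N(w_2)$ is a sum of terms each containing a factor $\Gamma_1-\Gamma_2=w_1-w_2$ and other factors among $w_1,w_2,\Gamma_{\rm per}$; the same trilinear/bilinear estimates give $\|\Phi(\Gamma_1)-\Phi(\Gamma_2)\|_{\mathcal X}\le C(\delta+\|\Gamma_{\rm per}\|_{\mathcal X})\|\Gamma_1-\Gamma_2\|_{\mathcal X}$, which is a strict contraction for $\delta$ and $\varepsilon_T$ small. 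I expect the main obstacle to be item (iii): making the estimate for $[u_{\rm per}\times B_{\rm per}]\times B$ close in the averaged-time spaces, since one must simultaneously exploit the low-frequency $\ell^1$ structure (to get the missing $L^\infty$ on $u_{\rm per}$), the $H^{1/2}$ control of $B_{\rm per}$, the decay of $B$, and the maximal regularity of the damped Maxwell part — and it is exactly this term that forces both the choice of the anisotropic spaces $\dot{\mathcal B}^{(s_1,s_2)}_{p,(q_1,q_2)}$ and the $\varepsilon$-loss.
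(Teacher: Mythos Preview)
Your proposal is correct and follows essentially the same strategy as the paper's own proof: the paper too reduces everything to the homogeneous bound $\|e^{t\mathcal A}\Gamma^0_{\rm err}\|_{\mathcal X}\lesssim\|\Gamma^0_{\rm err}\|$ (their Lemmas \ref{ghr} and \ref{pr1}) plus the Duhamel bound combined with the trilinear/bilinear product estimates (their Lemmas \ref{lem-prod1}--\ref{lem-prod3}), then closes the contraction inequality with smallness of $\delta$, $\kappa$, and $\varepsilon_T$. The paper's write-up is in fact terser than yours---it simply states \eqref{tree} and \eqref{arbre} and says the nonlinear lemmas ``immediately give'' them---while your sketch makes the bookkeeping of decay rates and the role of the bad term \eqref{bad term} more explicit; in particular your identification of item (iii) as the crux and of the $\varepsilon$-loss mechanism matches exactly what the paper does in the proof of Lemma \ref{lem-prod3}.
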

Indeed, admitting for now this proposition, Picard's theorem gives the existence of a fixed point
of the map $\Phi$, call it $\Gamma$. Then clearly $e^{t{\mathcal A}}\Gamma^0_{\rm err} + \Gamma(t)$ would be the desired solution of \eqref{eqns}.\\
In order to prove Proposition \ref{contraction}, we need a few preliminary lemmas. 
\subsection{Preliminary results}
We start with several preliminary lemmas. First, we prove the following result of parabolic regularity, in the spirit of \cite{BCD},  adapted to the spaces $\mathcal X$ and $\mathcal Y_1$ in the following way.

\begin{lemma}[Adapted maximum parabolic regularity]
\label{ghr}
Let $u$ be a smooth divergence free vector field solving
\begin{equation}
   \label{g u-f} 
   \left\{ \begin{array}{l} \partial_t u - \Delta
u + \nabla p
 = f \\
 u_{|t=0} = u^0, 
\end{array} \right. 
\end{equation}
on some time interval $[0,T]$. Then, we have
 \begin{equation*}
 \| u  \|_{\mathcal X_1\cap\tilde L^\infty(\dot B^\frac12_{2,(\infty,1)}) }   \lesssim    \| u^0 \|_{\dot B^{\frac12}_{2,(\infty,1)}} + 
\| f  \|_{\mathcal Y_1}.
\end{equation*}
\end{lemma}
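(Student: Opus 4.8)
The plan is to decompose the solution $u$ of \eqref{g u-f} using the Duhamel formula $u(t) = e^{t\Delta}u^0 + \int_0^t e^{(t-t')\Delta}\mathbb{P}f(t')\,dt'$ and estimate the homogeneous and inhomogeneous parts separately, working dyadic block by dyadic block so that the $\tilde L^r$-type structure of $\mathcal{X}_1$ and $\mathcal{Y}_1$ is respected. For each frequency $q$, the block $\Delta_q u$ solves a heat equation with forcing $\Delta_q \mathbb{P}f$, and the key scalar input is the standard smoothing estimate $\|\Delta_q e^{t\Delta}g\|_{L^p} \lesssim e^{-c 2^{2q} t}\|\Delta_q g\|_{L^p}$. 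Since $\mathcal{X}_1$ contains three ingredients — the pointwise-in-time weighted bound in $\dot{\mathcal{B}}^{(3/2-\varepsilon,1/2)}_{2,(\infty,1)}$, the averaged-decay $L^2(n,n+1)$ norm in $\dot{\mathcal{B}}^{3/2}_{2,(\infty,1)}$, and (from the intersection) the $\tilde L^\infty(\dot{\mathcal{B}}^{1/2}_{2,(\infty,1)})$ bound — I would verify the estimate for each of these three norms in turn.

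\textbf{Homogeneous part.} For $v(t) := e^{t\Delta}u^0$ one has $\|\Delta_q v(t)\|_{L^2} \lesssim e^{-c2^{2q}t}\|\Delta_q u^0\|_{L^2}$. The $\tilde L^\infty \dot{\mathcal B}^{1/2}_{2,(\infty,1)}$ bound is immediate. For the decay norms: at high frequencies ($q>0$) the factor $e^{-c2^{2q}t}$ decays fast enough to absorb any polynomial weight $(t+1)^{(1-\varepsilon)/2}$, and one integrates $2^{3q}\int_n^{n+1}e^{-c2^{2q}t}\,dt \lesssim 2^q e^{-c2^{2q}n}$, whose weighted supremum over $n$ is finite; the summability over $q>0$ at exponent $\frac32$ is where one gains from the $q>0$ restriction. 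At low frequencies ($q\le 0$) the gain is only $2^{3q-2q}=2^q$ which is summable downward, and the $(n+1)^{(1-\varepsilon)/2}$ weight is controlled because $2^{3q}\int_n^{n+1}e^{-c2^{2q}t}\,dt$ times $(n+1)^{(1-\varepsilon)}$ stays bounded after taking the sup over $n$ — here one uses $\sup_{t\ge 0}t^{(1-\varepsilon)/2}e^{-c2^{2q}t}\lesssim 2^{-q(1-\varepsilon)}$, which combines with $2^{3q}$ to leave $2^{q(2+\varepsilon)}$, summable. The pointwise-weighted $\dot{\mathcal B}^{(3/2-\varepsilon,1/2)}$ norm works similarly, the low-frequency regularity drop to $\frac32-\varepsilon$ being exactly tuned so that $(t+1)^{(1-\varepsilon)/2}\cdot 2^{(3/2-\varepsilon)q}\|\Delta_q v\|_{L^2}$ is bounded uniformly and summable in $q\le 0$.

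\textbf{Inhomogeneous part and the main obstacle.} For $w(t) = \int_0^t e^{(t-t')\Delta}\Delta_q\mathbb{P}f(t')\,dt'$ one applies the block smoothing estimate to get $\|\Delta_q w(t)\|_{L^2}\lesssim \int_0^t e^{-c2^{2q}(t-t')}\|\Delta_q f(t')\|_{L^2}\,dt'$ and then Young's inequality in time. The subtle point — and what I expect to be the main obstacle — is to convert the $\mathcal{Y}_1$ norm, which controls $\|\Delta_q f\|_{L^2(n,n+1;L^2)}$ with the weight $(n+1)^{(1-\varepsilon)/2}$ \emph{unit-interval by unit-interval}, into a global-in-time convolution bound producing the $\mathcal{X}_1$ norm with the same weight. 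One splits the time integral as $\int_0^t = \sum_{m} \int_{I_m\cap[0,t]}$ over the unit intervals $I_m = (m,m+1)$; on each piece, Cauchy–Schwarz in $t'$ and the exponential kernel give a bound by $2^{-q}e^{-c2^{2q}(t-m-1)_+}\|\Delta_q f\|_{L^2(I_m)}$, and then one must sum over $m\le t$ against the weight. The weight mismatch is handled by the elementary inequality $(n+1)^{(1-\varepsilon)/2}\lesssim \sum_{m\le n}(m+1)^{-(1+\varepsilon)/2}(m+1)^{(1-\varepsilon)/2}\cdot(\text{something summable})$ — more precisely one uses that the discrete convolution of the sequence $(m+1)^{(1-\varepsilon)/2}$ (coming from $\mathcal Y_1$) with the fast-decaying kernel from the heat semigroup reproduces, up to a constant, the sequence $(n+1)^{(1-\varepsilon)/2}$, because the exponent $(1-\varepsilon)/2 < 1$. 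This is the place where the loss of $\varepsilon$ in the decay rate is genuinely needed: with the sharp rate $\frac12$ the borderline discrete convolution would produce a logarithmic loss. Once this discrete convolution lemma is in hand, the high-frequency gain of $2^{-q}$ per integration lets one close the $\dot B^{3/2}$ and $\dot B^{1/2}$ bounds, and the low-frequency regularity drop to $\frac32-\varepsilon$ in $\mathcal X_1$ absorbs the low-frequency regularity drop to $-\frac12-\varepsilon$ in $\mathcal Y_1$ with exactly two derivatives of heat smoothing to spare. I would state and prove the needed one-line discrete convolution inequality first, then feed it into the dyadic estimates above to conclude.
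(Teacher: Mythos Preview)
Your approach is essentially the same as the paper's: Duhamel formula, block-by-block heat estimate $\|\Delta_q e^{t\Delta}g\|_{L^2}\lesssim e^{-c2^{2q}t}\|\Delta_q g\|_{L^2}$, splitting the time integral into unit intervals $\sum_{m<[t]}\int_m^{m+1}+\int_{[t]}^t$, Cauchy--Schwarz on each piece, and then a weighted discrete sum in $m$. The paper phrases the last step as an explicit dichotomy $[t]2^{2k}\le 1$ versus $[t]2^{2k}\ge 1$ (in the first regime the exponential is essentially $1$ and one sums the bare weight; in the second one compares to the integral $\int_0^{[t]}e^{s2^{2k}}/\sqrt{s}\,ds$), whereas you package it as a ``discrete convolution lemma''. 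These are the same computation.

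Two small corrections. First, for low frequencies $q\le 0$ the $\mathcal X_1$ and $\tilde L^\infty(\dot B^{1/2}_{2,(\infty,1)})$ norms take a \emph{supremum} over $q$, not a sum, so your remarks about ``summable downward'' are beside the point; boundedness is what you need and what you get. Second, your explanation of why $\varepsilon>0$ is needed is not quite right for \emph{this} lemma: the discrete convolution $\sum_{m\le n}e^{-\lambda(n-m)}(m+1)^{-\alpha}\lesssim \lambda^{-1}(n+1)^{-\alpha}$ holds for any $\alpha\in(0,1)$, including $\alpha=1/2$, with no logarithm. The role of $\varepsilon$ here is structural: the low-frequency regularity indices in $\mathcal X_1$ and $\mathcal Y_1$ are shifted to $\frac32-\varepsilon$ and $-\frac12-\varepsilon$ precisely so that the gap is exactly $2$, matching the heat smoothing; the genuine need for $\varepsilon>0$ arises later, in the nonlinear product estimates where two non-decaying periodic factors interact. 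With those caveats, your outline would carry through.
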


\begin{proof}
From the start, for any $k\in\mathbb Z$, and $0\leq\varepsilon\leq1$ denote by
$$
\alpha_k:=\min(2^k,1),
$$
and define the norm 
$$
  M_{k,\varepsilon}(f):=\sup_{n\in\mathbb N}\;(2^{-k(\frac12+\varepsilon)}(n+1)^\frac{1-\varepsilon}2\|\Delta_kf\|_{L^2_t(n,n+1;L^2_x)}),
$$
to be used to control the low frequencies of $f$, and 
$$
\tilde M_{k,\varepsilon}(f):=\sup_{n\in\mathbb N}\;(2^{-\frac k2}(n+1)^\frac{1-\varepsilon}2\|\Delta_kf\|_{L^2_t(n,n+1;L^2_x)}),
$$ 
to control its high frequencies. Also, observe that for all $c>0$ we have
\begin{eqnarray}
\label{element est}
x^{{1-\varepsilon}}e^{-cx^2}\leq C(c,\varepsilon)\lesssim 1.
\end{eqnarray} 
Moreover, the following elementary estimate 
\begin{eqnarray*}
\int_0^Ae^{u^2}\;du\leq\frac{e^A-1}A
\end{eqnarray*} 
clearly implies the following one
\begin{eqnarray}
\label{integ est}
\sup_{t>0}t^\frac12e^{-\frac t2}\int_0^t\frac{e^{\frac s2}}{\sqrt s}\;ds\leq4
\end{eqnarray}
which will be of frequent use in the proofs of our linear estimates. In addition, the following estimate is classical and can be found, for example, in \cite{BCD}
\begin{eqnarray}
 \label{key dyadic heat est}
\|\Delta_ke^{t\Delta}u_0\|_{L^2}\lesssim e^{-ct2^{2k}}\|\Delta_ku_0\|_{L^2}.
\end{eqnarray}
From now on, we will `drop`` this constant $c$ by taking it always equals to one.\\
Duhamel's formula for the solution of \eqref{g u-f} gives 
$$
u(t)=e^{t\Delta}u_0+\int_0^te^{(t-s)\Delta}\mathbb{P} f(s)\;ds,
$$ 
where $\mathbb P$ is Leray's projection.

$\bullet$ First, we focus on the homogeneous solution $e^{t\Delta}u_0$. Multiplying \eqref{key dyadic heat est}
by $2^\frac k2$, taking the supremum in time and then summing in $k$ 
(and the supremum in $k$ for low frequencies), easily gives

{
\begin{eqnarray}
 \label{heat est1}
\tilde{\sup_{t>0}}\;\|e^{t\Delta}u_0\|_{\dot{\mathcal B}^\frac12_{2,(\infty,1)}}\lesssim 
\|u_0\|_{\dot{\mathcal B}^\frac12_{2,(\infty,1)}}.
\end{eqnarray}}
Now we focus on the norm giving the decay. From \eqref{key dyadic heat est} and for all $0\leq\varepsilon\leq1$, we have for any $k\leq0$

{
\begin{eqnarray}
\label{homo est1}
\nonumber
(t+1)^\frac{1-\varepsilon}2 2^{k(\frac32-\varepsilon)}\|\Delta_k e^{t\Delta}u_0\|_{L^2}
&\lesssim& \big(2^{2k}(t+1)\big)^\frac{1-\varepsilon}2 e^{-ct2^{2k}}  2^{\frac k2}\|\Delta_ku_0\|_{L^2}\\
\nonumber
&\lesssim& \big(2^{2k}(t+1)\big)^\frac{1-\varepsilon}2 e^{-(t+1)2^{2k}}  2^{\frac k2}\|\Delta_ku_0\|_{L^2}\\
&\lesssim& 
2^{\frac k2}\|\Delta_ku_0\|_{L^2},
\end{eqnarray}}
where we used \eqref{element est}. When $k\geq1$, we estimate as follows

\begin{eqnarray}
\label{homo est2}
\nonumber
(t+1)^\frac{1-\varepsilon}2 2^{\frac k2}\|\Delta_k e^{t\Delta}u_0\|_{L^2}
&\lesssim& (t+1)^\frac{1-\varepsilon}2 e^{-t2^{2k}}  2^{\frac k2}\|\Delta_ku_0\|_{L^2}\\
\nonumber
&\lesssim& (t+1)^\frac{1-\varepsilon}2 e^{-(t+1)}  2^{\frac k2}\|\Delta_ku_0\|_{L^2}\\
&\lesssim& 
2^{\frac k2}\|\Delta_ku_0\|_{L^2}.
\end{eqnarray}
Obviously, \eqref{homo est1} and \eqref{homo est2} give 
$$
\tilde{\sup_{t>0}}\;(t+1)^\frac{1-\varepsilon}2
\|e^{t\Delta}u_0\|_{\dot{\mathcal B}^{\frac32-\varepsilon,\frac12}_{2,(\infty,1)}}\lesssim 
\|u_0\|_{\dot{\mathcal B}^{\frac12}_{2,(\infty,1)}}.
$$
Again thanks to \eqref{key dyadic heat est}, we have
\begin{eqnarray*}
\|\Delta_k e^{t\Delta}u_0\|_{L^2(n,n+1,L^2_x)}&\lesssim& \frac{\sqrt{  e^{-2n2^{2k}}-e^{-2(n+1)2^{2k}}   }}{2^k}\|\Delta_ku_0\|_{L^2}\\
&\lesssim& 2^{-k}\alpha_k e^{-n2^{2k}}\|\Delta_ku_0\|_{L^2},
\end{eqnarray*}
so that
$$
(n+1)^{\frac{1-\varepsilon}2} 2^{\frac32k}\|\Delta_ku\|_{L^2(n,n+1,L^2_x)}
\lesssim (n+1)^{\frac{1-\varepsilon}2} e^{-(n+1)} 2^{\frac k2}\|\Delta_ku_0\|_{L^2}\lesssim 2^{\frac k2}\|\Delta_ku_0\|_{L^2},
$$
and consequently, one obtains 
$$
\tilde{\sup_{n}}\;(n+1)^{\frac{1-\varepsilon}2}
\|e^{t\Delta}u_0\|_{\tilde L^2(n,n+1;\;\dot{\mathcal B}^\frac32_{2,(\infty,1)})}\lesssim 
\|u_0\|_{\dot{\mathcal B}^{\frac12}_{2,(\infty,1)}}
$$
as desired.

$\bullet$ Second, we focus on the non-homegenous solution $v(t):=\int_0^te^{(t-s)\Delta}\mathbb P f(s)\;ds$, where $\mathbb P$ is Leray's projection.
Fix $t>0$ and let $[t]$ be its integer part.  Decompose

\begin{eqnarray}
\nonumber
\|\Delta_kv(t)\|_{L^2_x}&\lesssim& \sum_{n=0}^{[t]-1} \int_n^{n+1}
e^{-(t-s)2^{2k}}\|\Delta_k f(s)\|_{L^2}\;ds+\int_{[t]}^te^{-(t-s)2^{2k}}\|\Delta_k f(s)\|_{L^2}\;ds.
\end{eqnarray}
Notice that if $0\leq t<1$, then only the last term shows up in the last inequality. We start by estimating 
$v$ in $\tilde L^\infty(\dot{\mathcal B}^\frac12_{2,(\infty,1)})$. \\
For any $n\leq [t]-1$, we have 
\begin{eqnarray}
\nonumber
\int_n^{n+1} e^{-(t-s)2^{2k}}\|\Delta_k f(s)\|_{L^2}\;ds&\lesssim& 
\Big(\max_{n}(n+1)^\frac{1-\varepsilon}2\|\Delta_kf\|_{L^2(n,n+1,L^2_x)}\Big)
\frac{ e^{-(t-n-1)2^{2k}} \alpha_k    }{(n+1)^\frac{1-\varepsilon}22^{k}}\\
&\lesssim& M_{k,\varepsilon}(f)2^{-k(\frac12-\varepsilon)}\alpha_ke^{-t2^{2k}}\frac{ e^{(n+1)2^{2k}}     }{(n+1)^\frac{1-\varepsilon}2}.
 \label{sum est0}
\end{eqnarray}
where we used $\sqrt{1-e^{-22^{2k}}}\leq\alpha_k$ in the first above estimate. Moreover,
\begin{eqnarray}
 \nonumber
\int_{[t]}^te^{-(t-s)2^{2k}}\|\Delta_k f(s)\|_{L^2}\;ds&\lesssim& 
\Big(\max_{n}(n+1)^\frac{1-\varepsilon}2\|\Delta_kf\|_{L^2(n,n+1,L^2_x)}\Big)
\frac{\sqrt{ 1 -e^{-2(t-[t])2^{2k}}   }}{  (t+1)^\frac{1-\varepsilon}22^{k}   }\\
&\lesssim& M_{k,\varepsilon}(f)2^{-k(\frac12-\varepsilon)}\frac{\min\big(1,\sqrt{2(t-[t])2^{2k}}\big)}{(t+1)^\frac{1-\varepsilon}2}\label{sum est1}\\
&\lesssim& M_{k,\varepsilon}(f)2^{-k(\frac12-\varepsilon)}.
\nonumber
\end{eqnarray}
Thus,

\begin{eqnarray}
 \label{low}
\nonumber
2^{\frac k2}\|\Delta_kv(t)\|_{L^2_x}&\lesssim& M_{k,\varepsilon}(f)\Big(\alpha_k2^{k\varepsilon}e^{-t2^{2k}}\sum_{n=0}^{[t]-1}\frac{ e^{(n+1)2^{2k}}     }{(n+1)^\frac{1-\varepsilon}2}+2^{k\varepsilon}\Big).
\end{eqnarray}
Now, when $k\leq0$, to estimate the sum, we distinguish two cases. In the case $[t]2^{2k}\leq1$ we use 

\begin{eqnarray}
\label{sum est2}
\alpha_k2^{k\varepsilon} e^{-t2^{2k}}\sum_{n=0}^{[t]-1}\frac{ e^{(n+1)2^{2k}}     }{(n+1)^\frac{1-\varepsilon}2}\lesssim \alpha_k2^{k\varepsilon} [t]^\frac{1+\varepsilon}2\lesssim (2^{2k}[t])^\frac{1+\varepsilon}2\lesssim1.
\end{eqnarray}
In the case, $[t]2^{2k}\geq1$, we use \eqref{integ est} and estimate in the following way

\begin{eqnarray}
\label{sum est3}
\sum_{n=0}^{[t]-1}\frac{ e^{(n+1)2^{2k}}     }{(n+1)^\frac{1-\varepsilon}2}\leq [t]^\frac\varepsilon2 \sum_{n=0}^{[t]-1}\frac{ e^{(n+1)2^{2k}}     }{\sqrt{n+1}}\lesssim 2^{-k}\frac{     e^{[t]2^{2k}}    }{   (2^{2k}[t])^\frac{1-\varepsilon}2}.
\end{eqnarray}
Consequently, in both cases, we have
$$
\alpha_k2^{k\varepsilon}e^{-t2^{2k}}\sum_{n=0}^{[t]-1}\frac{ e^{(n+1)2^{2k}}     }{(n+1)^\frac{1-\varepsilon}2}\lesssim1, 
$$
and therefore
$$
\sup_{k\leq0}\sup_{t>0}2^\frac k2\|\Delta_kv(t)\|_{L^2}\lesssim \sup_{k\leq0}M_{k,\varepsilon}(f)
$$
Next, we treat frequencies $k\geq1$. Arguing exactly as above, we obtain

\begin{eqnarray}
\nonumber
\int_n^{n+1} e^{-(t-s)2^{2k}}\|\Delta_k f(s)\|_{L^2}\;ds + \int_{[t]}^te^{-(t-s)2^{2k}}\|\Delta_k f(s)\|_{L^2}\;ds&\lesssim& 
\tilde M_{k,\varepsilon}(f)2^{-\frac k2}
\end{eqnarray}
yielding

$$
\sum_{k\geq0}\sup_{t>0}2^\frac k2\|\Delta_kv(t)\|_{L^2}\lesssim \sum_{k\geq0}\sup_{n\in\mathbb N}(n+1)^\frac{1-\varepsilon}22^{-\frac k2}\|\Delta_kf\|_{L^2(n,n+1;L^2)}.
$$
Whence
$$
\tilde{\sup_{t>0}}\;\|u(t)\|_{\dot{\mathcal B}^\frac12_{2,(\infty,1)}}\leq \|f\|_{\mathcal Y_1}.
$$
This shows the estimate in $\tilde L^\infty({\dot{\mathcal B}^\frac12_{2,(\infty,1)}})$. Next we will estimate $v(t)$ in $\mathcal X_1$.\\ 
Thanks to \eqref{sum est0} and \eqref{sum est1} we have

\begin{eqnarray*}
(t+1)^\frac{1-\varepsilon}2 2^{k(\frac12-\varepsilon)}\alpha_k \|\Delta_kv(t)\|_{L^2_x}
&\lesssim&(t+1)^{\frac{1-\varepsilon}2}
{M_{k,\varepsilon}(f)}\alpha_k^2
e^{-t2^{2k}}\sum_{n=0}^{[t]-1}
\frac{e^{(n+1)2^{2k}}      }    {(n+1)^\frac{1-\varepsilon}2}\\
&+&
{M_{k,\varepsilon}(f)}\alpha_k.
\end{eqnarray*}
In the case $[t]2^{2k}\leq1$ we use \eqref{sum est2} to conclude that

$$
(t+1)^{\frac{1-\varepsilon}2}{M_{k,\varepsilon}(f)}\alpha_k^2
e^{-t2^{2k}}\sum_{n=0}^{[t]-1}
\frac{e^{(n+1)2^{2k}}      }    {(n+1)^\frac{1-\varepsilon}2}\lesssim M_{k,\varepsilon}(f)\alpha_k^2(t+1)\lesssim M_{k,\varepsilon}(f).
$$
In the case $[t]2^{2k}\geq1$ we use \eqref{sum est3} to conclude that

$$
(t+1)^{\frac{1-\varepsilon}2}{M_{k,\varepsilon}(f)}\alpha_k^2
e^{-t2^{2k}}\sum_{n=0}^{[t]-1}
\frac{e^{(n+1)2^{2k}}      }    {(n+1)^\frac{1-\varepsilon}2}\lesssim M_{k,\varepsilon}(f)\alpha_k^22^{k(\varepsilon-2)}\lesssim M_{k,\varepsilon}(f).
$$
Taking the supremum in $t$ and then the supremum in $k\leq0$ gives

$$
\tilde{\sup_{t>0}}\;(t+1)^\frac{1-\varepsilon}2\|v^<(t)\|_{\dot{\mathcal B}^{\frac32-\varepsilon}_{2,\infty}}\lesssim 
\|f\|_{\mathcal Y_1}.
$$
Now we consider frequencies $k\geq1$. We have
\begin{eqnarray*}
(t+1)^\frac{1-\varepsilon}2 2^{\frac k2} \|\Delta_kv(t)\|_{L^2_x}
&\lesssim&(t+1)^\frac{1-\varepsilon}2
{\tilde M_{k,\varepsilon}(f)}\alpha_k
e^{-t2^{2k}}\sum_{n=0}^{[t]-1}
\frac{e^{(n+1)2^{2k}}      }    {(n+1)^\frac{1-\varepsilon}2}\\
&+&{M_{k,\varepsilon}(f)},
\end{eqnarray*}
and in the case $[t]2^{2k}\leq1$ we use \eqref{sum est2} to conclude that

$$
(t+1)^{\frac{1-\varepsilon}2}{\tilde M_{k,\varepsilon}(f)}
e^{-t2^{2k}}\sum_{n=0}^{[t]-1}
\frac{e^{(n+1)2^{2k}}      }    {(n+1)^\frac{1-\varepsilon}2}\lesssim\tilde M_{k,\varepsilon}(f)(t+1)\lesssim 2^{-2k}M_{k,\varepsilon}(f).
$$
In the case $[t]2^{2k}\geq1$ we use \eqref{sum est3} to conclude that

$$
(t+1)^{\frac{1-\varepsilon}2}{\tilde M_{k,\varepsilon}(f)}
e^{-t2^{2k}}\sum_{n=0}^{[t]-1}
\frac{e^{(n+1)2^{2k}}      }    {(n+1)^\frac{1-\varepsilon}2}\lesssim\tilde M_{k,\varepsilon}(f)2^{-k}\lesssim\tilde M_{k,\varepsilon}(f).
$$
Taking the supremum in $t$ and then summing in $k\geq0$ gives

$$
\tilde{\sup_{t>0}}\;(t+1)^\frac{1-\varepsilon}2\|v^>(t)\|_{\dot{\mathcal B}^{\frac12}_{2,1}}\lesssim 
\|f\|_{\mathcal Y_1}.
$$
In conclusion, we have shown that

$$
\tilde{\sup_{t>0}}\;(t+1)^\frac{1-\varepsilon}2\|v(t)\|_{\dot{\mathcal B}^{\frac32-\varepsilon,\frac12}_{2,(\infty,1)}}\lesssim 
\|f\|_{\mathcal Y_1}.
$$
Finally, we estimate $v$ in $\tilde\sup_n(n+1)^\frac{1-\varepsilon}2\|v\|_{L^2(n,n+1;\dot B^\frac32_{2,(\infty,1)})}$. Fix an integer $N$. For all $N\leq t< N+1$, arguing as before we obtain

\begin{eqnarray*}
\|\Delta_kv(t)\|_{L^2_x}&\lesssim&\tilde M_{k,\varepsilon}(f)\alpha_k
2^{-\frac k2}\sum_{n=0}^{N-1} \frac{e^{(n+1-t)2^{2k}}      }    {(n+1)^\frac{1-\varepsilon}2}     +\tilde M_{k,\varepsilon}(f)\alpha_k2^{-\frac k2}
\frac{\min\big(1,\sqrt{2(t-N)2^{2k}}\big)}{(N+1)^\frac{1-\varepsilon}2}\\
&\lesssim& M_{k,0}(f)\alpha_k2^{-\frac k2}\Big(\int_0^{[t]}   \frac{e^{-(t-s)2^{2k}}}{\sqrt s}\;ds+\frac{e^{(N-t)2^{2k}}}{\sqrt{N+1}}\Big)\\
&\lesssim& M_{k,0}(f)\alpha_k2^{-\frac k2}\Big(1+\frac{e^{(N-t)2^{2k}}}{\sqrt{N+1}}\Big)
\end{eqnarray*}
and

\begin{eqnarray*}
\|\Delta_kv(t)\|_{L^2(N,N+1,L^2_x)}&\lesssim&\tilde M_{k,\varepsilon}(f)\alpha_k^2
2^{-\frac32k}\sum_{n=0}^{N-1} \frac{e^{(n+1-N)2^{2k}}      }    {(n+1)^\frac{1-\varepsilon}2} 
\end{eqnarray*}
In the case $[t]2^{2k}\leq1$, we control the sum using \eqref{sum est2} to end up with

\begin{eqnarray*}
\|\Delta_kv(t)\|_{L^2(N,N+1,L^2_x)}&\lesssim& 2^{-\frac32k}\tilde M_{k,\varepsilon}(f)\alpha_k^2 N^\frac{1+\varepsilon}2.
\end{eqnarray*}
This leads to 

$$
(N+1)^\frac{1+\varepsilon}22^{\frac32k}\|\Delta_kv(t)\|_{L^2(N,N+1,L^2_x)}\lesssim \tilde M_{k,\varepsilon}(f)\alpha_k^2 N\lesssim \tilde M_{k,\varepsilon}(f)
$$
In the case $[t]2^{2k}\geq1$, we estimate the sum using \eqref{sum est3} to obtain
\begin{eqnarray*}
\|\Delta_kv(t)\|_{L^2(N,N+1,L^2_x)}&\lesssim& 2^{-\frac32k}\tilde M_{k,\varepsilon}(f)\alpha_k^2 \frac{2^{-k}}{(2^{2k}N)^\frac{1-\varepsilon}2},
\end{eqnarray*}
yielding estimates

$$
(N+1)^\frac{1-\varepsilon}22^{\frac32k}\|\Delta_kv(t)\|_{L^2(N,N+1,L^2_x)}\lesssim \tilde M_{k,\varepsilon}(f)\alpha_k^2 2^{(\varepsilon-2)k}\lesssim \tilde M_{k,\varepsilon}(f),
$$
and 
$$
\tilde{\sup_{N}}\;(N+1)^\frac{1-\varepsilon}2\|v(t)\|_{\tilde L^2(N,N+1;\;\dot{\mathcal B}^\frac32_{2,(\infty,1)})}\lesssim 
\|f\|_{\mathcal Y_1}
$$
as desired. This completes the proof of the Lemma. 

\end{proof}


Now we focus on Maxwell's equations.  
The following result quantifies a weak form of decay for the electromagnetic field $(E,B)$.

\begin{lemma}
\label{pr1}
Let $G\in \mathcal Y_1$, 
and $(E,B)$ be a smooth solution of 

\begin{equation}
   \label{Maxwell} 
   \left\{ \begin{array}{l} \partial_tE-\nabla\wedge B+E=G\\
\partial_tB+\nabla\wedge E=0\\
(E,B)_{|t=0} = (E^0,B^0), 
\end{array} \right. 
\end{equation}
on some time interval $[0,T]$. 
Then, the following estimate holds (with constants independent of $T$)
\begin{equation}
\label{energy-bis}
\|E\|_{\mathcal X_2}+
\|B\|_{\mathcal X_3}
 \lesssim  \|(E^0,B^0)\|_{ H^\frac12}+
\|G\|_{\mathcal Y_2}.
\end{equation}
\end{lemma}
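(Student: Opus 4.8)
\noindent
The plan is to diagonalise Maxwell's system in the Fourier variable, reduce \eqref{energy-bis} to decay estimates on explicit Fourier multipliers (the spectral facts recorded in the Appendix), and then feed those into the very same Littlewood--Paley / unit-interval machinery used in the proof of Lemma \ref{ghr}. Since $\operatorname{div}B=0$ is preserved by the flow, the irrotational part $E_\nabla$ solves the scalar damped equation $\partial_tE_\nabla+E_\nabla=G_\nabla$, hence $\widehat{E_\nabla}(t)=e^{-t}\widehat{E^0_\nabla}+\int_0^te^{-(t-s)}\widehat{G_\nabla}(s)\,ds$, which decays exponentially uniformly in $\xi$ and is handled like the lower-order terms below. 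For the solenoidal parts, using $\xi\cdot\widehat B=\xi\cdot\widehat{E_\sigma}=0$ one gets, for each frozen $\xi$, the damped oscillator
\begin{equation*}
\ddot{\widehat B}+\dot{\widehat B}+|\xi|^2\widehat B=-i\xi\wedge\widehat{G_\sigma},\qquad \widehat B(0)=\widehat{B^0},\quad\dot{\widehat B}(0)=-i\xi\wedge\widehat{E^0_\sigma},
\end{equation*}
with characteristic roots $\lambda_\pm=\tfrac12\bigl(-1\pm\sqrt{1-4|\xi|^2}\bigr)$. Writing $K_2(\tau)=\frac{e^{\lambda_+\tau}-e^{\lambda_-\tau}}{\lambda_+-\lambda_-}$ and $K_1=\dot K_2+K_2$ (the two normalised fundamental solutions), one obtains
\begin{equation*}
\widehat B(t)=K_1(t)\widehat{B^0}-K_2(t)\,i\xi\wedge\widehat{E^0_\sigma}+\int_0^t K_2(t-s)\bigl(-i\xi\wedge\widehat{G_\sigma}(s)\bigr)\,ds,
\end{equation*}
and, after one integration by parts in the Duhamel term for $E_\sigma$ (using $K_2(0)=0$, which cancels the $\widehat{G_\sigma}(0)$ contributions),
\begin{equation*}
\widehat{E_\sigma}(t)=\dot K_2(t)\widehat{E^0_\sigma}+K_2(t)\,i\xi\wedge\widehat{B^0}+\int_0^t\dot K_2(t-s)\,\widehat{G_\sigma}(s)\,ds.
\end{equation*}

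\emph{Multiplier bounds.} I split $\xi$ into $|\xi|\ge\tfrac12$ and $|\xi|\le\tfrac12$ (the collision set $|\xi|=\tfrac12$, where $\lambda_+=\lambda_-$ and $K_2(\tau)\sim\tau e^{-\tau/2}$ stays bounded, lies in finitely many dyadic shells and is harmless). For $|\xi|\ge\tfrac12$ one has $\operatorname{Re}\lambda_\pm=-\tfrac12$ and $|\lambda_+-\lambda_-|\gtrsim|\xi|$, so $|K_1(\tau)|+|\dot K_2(\tau)|+|\xi|\,|K_2(\tau)|\lesssim e^{-\tau/2}$: high frequencies decay exponentially in time with no gain and no loss of derivatives, giving, for $k\ge1$,
\begin{equation*}
\|\Delta_k(E,B)(t)\|_{L^2}\lesssim e^{-t/2}\|\Delta_k(E^0,B^0)\|_{L^2}+\int_0^te^{-(t-s)/2}\|\Delta_kG(s)\|_{L^2}\,ds.
\end{equation*}
For $|\xi|\le\tfrac12$ one has $\lambda_+\sim-|\xi|^2$, $\lambda_-\sim-1$, hence $|K_1(\tau)|\lesssim e^{-c|\xi|^2\tau}+e^{-\tau}$, $|\xi|\,|K_2(\tau)|\lesssim|\xi|(e^{-c|\xi|^2\tau}+e^{-\tau})$ and $|\dot K_2(\tau)|\lesssim|\xi|^2e^{-c|\xi|^2\tau}+e^{-\tau}$; substituting into the formulas above bounds $\|\Delta_kB(t)\|_{L^2}$ for $k\le0$ by $e^{-ct2^{2k}}\|\Delta_kB^0\|_{L^2}$ plus terms carrying an extra factor $2^k$ (from the $i\xi$) or an exponential $e^{-t}$ in time, and likewise for $E$ but with an extra $2^k$ in front of the diffusive factor. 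The only term with neither $|\xi|$-gain nor fast decay is $e^{-ct2^{2k}}\|\Delta_kB^0\|_{L^2}$, and the weight $\dot H^{1,\frac12}$ in $\mathcal X_3$ (exponent $2^k$ at low frequency, which is \emph{smaller} than $2^{k/2}$), together with the $\varepsilon$-loss, is what absorbs it: $(t+1)^{1-\varepsilon}2^{2k}e^{-2ct2^{2k}}\lesssim2^{2k\varepsilon}\lesssim1$ for $k\le0$, by \eqref{element est}, which would fail without either compensation.

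\emph{Assembling the norms.} This now copies the argument of Lemma \ref{ghr}. For the homogeneous contributions one uses \eqref{element est} to convert the multiplier bounds into $(t+1)^{(1-\varepsilon)/2}2^{ks}\|\Delta_k(E,B)(t)\|_{L^2}\lesssim\|\Delta_k(E^0,B^0)\|_{L^2}$ (with $s=\tfrac12$ at high frequency and the corresponding low-frequency exponent), then sums in $\ell^2$ over $k$; because $\mathcal X_2$ and $\mathcal X_3$ only involve $L^\infty_t$-type norms on the intervals $[n,n+1]$, no averaged-in-time quantity has to be controlled, so this step is lighter than in Lemma \ref{ghr}. For the Duhamel contributions one cuts $[0,t]$ into unit intervals $[n,n+1]$, applies Cauchy--Schwarz in $s$ on each so that $\|\Delta_kG\|_{L^2(n,n+1;L^2)}$ appears, inserts the definition of the $\mathcal Y_2$ norm, and sums the resulting series in $n$ using \eqref{integ est} for the low frequencies and $\sum_n e^{-(t-n)/2}\lesssim1$ for the high ones, exactly as in \eqref{sum est0}--\eqref{sum est3}. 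Adding the contribution of $E_\nabla$ and of the $e^{-t}$ terms yields \eqref{energy-bis}.

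\emph{Main obstacle.} The crux is the point flagged above: at low frequencies the magnetic field enjoys only the purely diffusive rate $e^{-ct|\xi|^2}$ with no regularisation whatsoever, so the naive estimate in $\dot H^{1/2}$ with the full $(t+1)^{1/2}$ decay weight is false. Closing \eqref{energy-bis} is therefore not a matter of a single inequality but of the interplay of three choices --- putting $B$ in the weaker space $\dot H^{1,\frac12}$, accepting the $\varepsilon$-loss in the decay rate, and exploiting the $|\xi|$-gain coming from the curl in every term where $B^0$, $E^0_\sigma$ or $G_\sigma$ enters through $i\xi\wedge(\cdot)$ --- and keeping that bookkeeping straight is where the difficulty lies. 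A minor secondary issue is the degeneracy of $\lambda_+-\lambda_-$ at $|\xi|=\tfrac12$, but this is a fixed compact frequency set contributing finitely many harmless dyadic blocks.
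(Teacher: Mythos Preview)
Your approach is correct and follows the same overall strategy as the paper --- Fourier diagonalisation of the damped Maxwell system, the split $E=E_\sigma+E_\nabla$, the low/high frequency dichotomy governed by the eigenvalues $\lambda_\pm$, and the identification of the purely diffusive term $e^{-c|\xi|^2 t}\widehat{B^0}$ as the only low-frequency contribution with neither a $|\xi|$-gain nor exponential damping --- but the implementation differs in two places worth noting. First, the paper works with the spectral coordinates $(e^0,b^0)$ of Proposition~\ref{pr11} and recovers $E_\sigma$ via Faraday's law $\nabla\wedge E_\sigma=-\partial_t B$ by differentiating the Duhamel formula for $\hat B$, whereas you parametrise through the fundamental solutions $K_1,K_2$ and write a direct formula for $\widehat{E_\sigma}$; the two are equivalent (your $K_2$ is exactly $(e^{t\lambda_+}-e^{t\lambda_-})/(\lambda_+-\lambda_-)$), but your version sidesteps the slightly awkward identity $\hat B^0-b^0=-\tfrac{i}{\lambda_-}\xi\times e^0$ that the paper has to invoke twice. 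Second, for the Duhamel term the paper pulls out a pointwise-in-time bound $\sup_s s^{1/2}\|\Delta_k G(s)\|_{L^2}$ and then appeals to the integral inequality \eqref{integ est}, while you cut $[0,t]$ into unit intervals and feed in the $\mathcal Y_2$ norm exactly as in the proof of Lemma~\ref{ghr}; your route is more faithful to the stated hypothesis on $G$ and keeps the two linear lemmas on the same footing, at the cost of repeating the bookkeeping of \eqref{sum est0}--\eqref{sum est3}.
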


\begin{proof}
First, we recall that in \cite{GIM}, the following estimate was proven.
\begin{equation}
\label{energy-bis-bis}
\|E\|_{{\tilde L}^\infty_T  H^{\frac12}\cap 
L^2_T \dot H^{\frac12}}+
\|B\|_{{\tilde L}^\infty_T  H^{\frac12}\cap L^2_T\dot H^{1,\frac12}}
 \lesssim  \|(E_0,B_0)\|_{ H^{\frac12}}+
\|G\|_{L^2_T  H^{\frac12}}
\end{equation}
with a constant independent of $T$. Hence, it will be sufficient to prove that

\begin{equation}
\label{decay}
\tilde{\sup_{t>0}}t^\frac12\;\|E(t)\|_{\dot H^\frac12}+
\tilde{\sup_{t>0}}t^\frac12\;\|B(t)\|_{\dot H^{1,\frac12}}
 \lesssim  \|(E_0,B_0)\|_{ H^{\frac12}}+
\|G\|_{L^2_T  H^{\frac12}}.
\end{equation}

We decompose $E=E_\sigma+E_\nabla$ into its divergence free component $E_\sigma$, and irrotational component $E_\nabla$. It is easy to check that $B$, $E_\sigma$ and $E_\nabla$ solve

\begin{equation}
\label{eq B}
\partial_t^2 B-\Delta B+\partial_t B=-\nabla \times G, 
\end{equation}

\begin{equation}
\label{eq E sigma}
 \partial_t B+\nabla\times E_\sigma=0
\end{equation}
 and

\begin{equation}
\label{eq E grad}
 \partial_t E_\nabla+E_\nabla=G_\nabla
\end{equation}
respectively. Thanks to the Fourier transform, and the spectral analysis given in the Appendix we have the following representation formula for $B$.

\begin{equation}
\label{Bc}
	\hat B(t)
	= e^{t\lambda_-} \hat B^{0}
	+\left(e^{t\lambda_+}-e^{t\lambda_-}\right) b^{0}
	+ \frac{i}{\lambda_-}
	\int_0^t\left( e^{(t-\tau)\lambda_+}
	- e^{(t-\tau)\lambda_-} \right) \xi\times e
	\;d\tau,
\end{equation}
where, 
$$
		\lambda_\pm (\xi)=
		\frac {-1 \pm \sqrt{ 1 - 4 |\xi|^2}} {2},
$$
the initial data $\left(\hat E^{0},\hat B^{0}\right)$ and the source term $\nabla\wedge G$ are decomposed as follows
\begin{equation}\label{Duha1}
	\begin{pmatrix}
		\hat E^{0} \\ \hat B^{0}
	\end{pmatrix}
	=
	\begin{pmatrix}
		\frac{\xi\cdot\hat E^{0}}{|\xi|^2}\xi \\ 0
	\end{pmatrix}
	+
	\begin{pmatrix}
		e^{0} \\ \frac{-i}{\lambda_-}\xi\times e^{0}
	\end{pmatrix}
	+
	\begin{pmatrix}
		\frac{-i}{\lambda_-}\xi\times b^{0} \\ b^{0}
	\end{pmatrix},
\end{equation}
with $\xi\cdot e^{0}=\xi\cdot b^{0}=0$, and

\begin{equation}
\label{Duha2}
	\begin{pmatrix}
		\hat G \\ 0
	\end{pmatrix}
	=
	\begin{pmatrix}
		\frac{\xi\cdot\hat G}{|\xi|^2}\xi \\ 0
	\end{pmatrix}
	+
	\begin{pmatrix}
		e \\ \frac{-i}{\lambda_-}\xi\times e
	\end{pmatrix}
	+
	\begin{pmatrix}
		\frac{-i}{\lambda_-}\xi\times b \\ b
	\end{pmatrix},
\end{equation}
with $\xi\cdot e=\xi\cdot b=0$.\\ 
Let $K$ be some fixed parameter $1<K<2$ determined such that for all $|\xi|\geq \frac 1{2K}$ we have $\mathcal R(\lambda_\pm)<-\frac14$.
To this end, we further decompose the magnetic fields as follows.
\begin{equation*}
	B =  B_< + B_> ,
\end{equation*}
where, for \begin{equation*}
	\begin{aligned}
		\hat B_< & = \mathds{1}_{\left\{ |\xi|\leq \frac{1}{2K}\right\}} \hat B,
		\\
		\hat B_> & = \mathds{1}_{\left\{\frac{ 1}{2K}<|\xi|\right\}} \hat B.
	\end{aligned}
\end{equation*}
The first corresponds to the low frequency component of $B$ and the second to the high frequency. Now we estimate each of the above terms separately. Thanks to \eqref{integ est}, we have

\begin{eqnarray*}
t^\frac12 2^{\frac k2}\|\Delta_kB_>(t)\|_{L^2}
&\lesssim& t^\frac12 2^\frac k2 e^{-\frac t4}(\|\Delta_kE^0\|_{L^2}+\|\Delta_kB^0\|_{L^2})\\
&+&t^\frac12 \int_0^te^{-\frac{t-s}4}2^\frac k2\|\Delta_k G(s)\|_{L^2}\;ds\\
&\lesssim&
2^\frac k2 (\|\Delta_kE^0\|_{L^2}+\|\Delta_kB^0\|_{L^2})\\
&+&
\sup_{t>0}\Big(t^\frac122^{\frac k2}\|\Delta_k G(t)\|_{L^2}\Big)t^\frac12e^{-\frac t4}\int_0^t\frac{e^{\frac s4}}{\sqrt s}\;ds.
\end{eqnarray*}
Again, using \eqref{integ est} we conclude that 
\begin{eqnarray}
\label{est B1}
t^\frac12 2^{\frac k2}\|\Delta_kB_>(t)\|_{L^2}
&\lesssim& \sup_{t>0}\Big(t^\frac122^{\frac k2}\|\Delta_k G(t)\|_{L^2}\Big)\\
&+&2^\frac k2 (\|\Delta_kE^0\|_{L^2}+\|\Delta_kB^0\|_{L^2}).
\nonumber
\end{eqnarray}
Now we estimate $B_<$. From Duhamel's formula, Lemma \ref{lambda}  and \eqref{integ est} we estimate

\begin{eqnarray*}
t^\frac12 2^k\|\Delta_kB_<(t)\|_{L^2}
&\lesssim& t^\frac12 2^k (e^{-\frac t2}\|\Delta_k(B^0-b^0)\|_{L^2}+e^{-t2^k}\|\Delta_kb^0\|_{L^2})
\\
&+&
t^\frac122^k\int_0^te^{-(t-s)2^{2k}}\|\Delta_k G_<(s)\|_{L^2}\;ds\\
&+& t^\frac122^k\int_0^te^{-\frac{t-s}2}\|\Delta_k G_<(s)\|_{L^2}\;ds\\
&\lesssim&
2^k\|\Delta_k(B^0-b^0)_<\|_{L^2}+\|\Delta_kb^0_<\|_{L^2}+\tilde{\sup_{t>0}}t^\frac12\|\Delta_k G_<(t)\|_{L^2_x}
\end{eqnarray*}
which, in virtue of Lemma \ref{lambda} and the identity 
\begin{eqnarray}
\label{B b}
\hat B^0-b^0=-\frac i{\lambda_-}\xi\times e^0.
\end{eqnarray}
gives
\begin{eqnarray*}
t^\frac12 2^k\|\Delta_kB_<(t)\|_{L^2}
&\lesssim& \|\Delta_k(B^0-b^0)_<\|_{L^2}+\|\Delta_kb^0_<\|_{L^2}\\
&+&\tilde{\sup_{t>0}}t^\frac12\|\Delta_k G_<(t)\|_{L^2_x}.
\end{eqnarray*}
Taking the $\ell^2$ summation in $k$ gives
$$
\tilde{\sup_{t>0}}\;t^\frac12\|B_<\|_{\dot H^1}\lesssim \|(E^0,B^0)\|_{L^2}+
\tilde{\sup_{t>0}}\;t^\frac12\|G\|_{L^2_x},
$$
as desired. To estimate $E$, it is sufficient to estimate 
$$
\tilde{\sup_{t>0}}\;t^\frac12\|E_\nabla\|_{H^\frac12},\quad\mbox{and}\quad 
\tilde{\sup_{t>0}}\;t^\frac12\|\nabla\times E_\sigma\|_{\dot H^{-1,-\frac12}},
$$
because 
$$
\tilde{\sup_{t>0}}\;t^\frac12\|E_\sigma\|_{H^\frac12}\lesssim \tilde{\sup_{t>0}}\;t^\frac12\|\nabla\times E_\sigma\|_{\dot H^{-1,-\frac12}}.
$$
Thanks to Faraday's law, we have
\begin{eqnarray*}
\|\nabla\times E_\sigma\|_{\dot H^{-1,-\frac12}}&=&\|\partial_t B\|_{\dot H^{-1,-\frac12}}.
\end{eqnarray*}
From Duhamel's formula, we have

\begin{eqnarray*}
\partial_t\hat B=\lambda_-e^{t\lambda_-}\hat B^0+(\lambda_+e^{t\lambda_+}-\lambda_-e^{t\lambda_-})b^0+\frac i{\lambda_-}
\int_0^t(\lambda_+e^{(t-s)\lambda_+}-\lambda_-e^{(t-s)\lambda_-})\xi\times e\;ds.
\end{eqnarray*}
Using Lemma \ref{lambda}, we have for all $k\leq1$, 
\begin{eqnarray*}
t^\frac122^{-k}\|\Delta_k(\nabla\times E_\sigma)\|_{L^2}&\lesssim& t^\frac122^{-k}2^{2k}e^{-t2^{2k}}\|\Delta_kb^0\|_{L^2}
+t^\frac122^{-k}e^{-\frac t2}\|\Delta_k(B^0-b^0)\|_{L^2}\\
&+&t^\frac122^{2k}\int_0^te^{(s-t)2^{2k}}\|\Delta_ke(s)\|_{L^2}\;ds
+t^\frac12\int_0^te^{\frac{s-t}2}\|\Delta_ke(s)\|_{L^2}\;ds\\
&\lesssim& \|\Delta_kb^0\|_{L^2}+(\|\Delta_kE^0\|_{L^2}+\|\Delta_kB^0\|_{L^2})\\
&+&\sup_{t>0}(t^\frac12\|\Delta_kG\|_{L^2})t^\frac122^{2k}e^{-t2^{2k}}\int_0^t\frac{e^{-s2^{2k}}}{\sqrt s} \;ds\\
&+&\sup_{t>0}(t^\frac12\|\Delta_kG\|_{L^2})t^\frac12e^{-\frac t2}\int_0^t\frac{e^{\frac s2}}{\sqrt s}\;ds.
\end{eqnarray*}
Again, it is important to mention that in the second estimate in above we used \eqref{B b}. Now for $k\geq2$

\begin{eqnarray*}
t^\frac122^{-\frac k2}\|\Delta_k(\nabla\times E_\sigma)\|_{L^2}&\lesssim&
t^\frac122^{\frac k2}e^{-\frac t2}(\|\Delta_kE^0\|_{L^2}+\|\Delta_kB^0\|_{L^2})\\
&+&t^\frac122^{\frac k2}\int_0^te^{(s-t)2^{2k}}\|\Delta_kG\|_{L^2}\;ds\\
&\lesssim&2^{\frac k2}(\|\Delta_kE^0\|_{L^2}+\|\Delta_kB^0\|_{L^2})\\
&+&
\sup_{t>0}(t^\frac122^\frac k2\|\Delta_kG\|_{L^2})t^\frac12e^{-t2^{2k}}\int_0^te^{s2^{2k}}\frac{ds}{\sqrt s}.
\end{eqnarray*}
Using \eqref{integ est} and taking the $\ell^2$ summation concludes the proof.
To estimate $\tilde{\sup_{t>0}}\;t^\frac12\|E_\nabla\|_{H^\frac12}$, we also write Duhamel's formula for $E_\nabla$.
$$
E_\nabla=e^{-t}E_\nabla^0+\int_0^te^{s-t} G_\nabla(s)\;ds.
$$
Then, 
\begin{eqnarray*}
t^\frac122^{\frac k2}\|\Delta_k E_\nabla\|_{L^2}&\lesssim& t^\frac122^{\frac k2}e^{-t}\|\Delta_k E_\nabla^0\|_{L^2}
+t^\frac12e^{-t}\sup_{t>0}(t^\frac122^\frac k2\|\Delta_kG_\nabla\|_{L^2})\int_0^t\frac{e^s}{\sqrt s}\;ds
\end{eqnarray*}
which finishes the proof of the Lemma.
\end{proof}


\subsection{Nonlinear estimates}
The following is a series of nonlinear estimates needed for the contraction argument.  The first Lemma is essential to estimate the nonlinearity in Maxwell-Amp\`ere's equation.

\begin{lemma} 
\label{lem-prod1} 
For all smooth functions  $u$, $E$ and $B$ defined on some 
interval $[0,T]$, we have the following estimates, with constants independent of $T$: 

\begin{eqnarray}
\label{est u B} 
\|u\wedge B \|_{\mathcal Y_2}\lesssim 
\|u\|_{\tilde L^2(\dot{\mathcal B}^\frac32_{2,(\infty,1)})\cap \tilde{\rm{sup}_t}t^\frac12 {\dot{\mathcal B}^\frac32_{2,(\infty,1)}} }
\|B\|_{\mathcal X_3}
\end{eqnarray}

\begin{eqnarray}
\label{est uper B} 
\|u_{\text per}\wedge B \|_{\mathcal Y_2}\lesssim   
\|u_{\text per}\|_{\tilde L^2_{\text per}(\dot{\mathcal B}^\frac32_{2,(\infty,1)})\cap  \tilde L^\infty( \dot{\mathcal B}^\frac12_{2,(\infty,1)}   )} 
\|B\|_{\mathcal X_3}
\end{eqnarray}

\begin{eqnarray}
\label{est u Bper} 
\|u\wedge B_{\text per} \|_{\mathcal Y_2}\lesssim   
\|u\|_{\mathcal X} 
\|B_{\text per}\|_{\tilde L^\infty_{\text{per}}(H^\frac12)}
\end{eqnarray}

\end{lemma}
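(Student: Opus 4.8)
The plan is to deduce all three inequalities from a single fixed-time multiplier estimate in the space variable, combined with Hölder's inequality in time on the unit windows $[n,n+1]$ and a bookkeeping of the polynomial weights $(n+1)^{\frac{1-\varepsilon}2}$. The key structural observation is that in each line exactly one factor decays in time -- in \eqref{est uper B} it is $B\in\mathcal X_3$, in \eqref{est u Bper} it is $u\in\mathcal X$, in \eqref{est u B} it is $B\in\mathcal X_3$ together with the $t^{\frac12}$-decay of $u$ -- while the other factor is either $T$-periodic or merely bounded, and $\mathcal Y_2=\tilde{\sup_{n}}\,(n+1)^{\frac{1-\varepsilon}2}\|\cdot\|_{L^2(n,n+1;H^{\frac12})}$ is of $L^2$-in-time type whereas the $B$-spaces are of $L^\infty$-in-time type. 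So on each window I would put the periodic (or non-decaying) factor in $L^\infty_t$ and the other in $L^2_t$, and let the decay of the $L^\infty_t$-factor absorb the $\mathcal Y_2$ weight.

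For the spatial ingredient: in the case of $B_{\text{per}}$, the pointwise-in-time bound $\|u\wedge B_{\text{per}}\|_{H^{\frac12}}\lesssim \|u\|_{\dot{\mathcal B}^{\frac12}_{2,(\infty,1)}\cap\dot{\mathcal B}^{\frac32}_{2,(\infty,1)}}\|B_{\text{per}}\|_{H^{\frac12}}$ is exactly the estimate of \eqref{NS est}-type furnished by Lemma \ref{law prod lem Bes}. For $B\in\mathcal X_3$ I need the variant $\|u\wedge B\|_{H^{\frac12}}\lesssim \|u\|_{\dot{\mathcal B}^{\frac12}_{2,(\infty,1)}\cap\dot{\mathcal B}^{\frac32}_{2,(\infty,1)}}\|B\|_{\dot H^{1,\frac12}}$, proved the same way via Bony's decomposition $u\wedge B=T_uB+T_Bu+R(u,B)$: the term $T_uB$ is controlled by the embedding $\dot{\mathcal B}^{\frac12}_{2,(\infty,1)}\cap\dot{\mathcal B}^{\frac32}_{2,(\infty,1)}\hookrightarrow L^\infty$ from the proof of Lemma \ref{law prod lem Bes}, and $T_Bu$, $R(u,B)$ are controlled by Bernstein's inequality, using that $B$ carries one extra derivative at low frequency -- precisely what is needed to push the lowest output frequencies of $u\wedge B$ into $L^2$, as demanded by the inhomogeneous $H^{\frac12}$-norm in $\mathcal Y_2$ (the bound $B\in\tilde L^\infty_T H^{\frac12}$ that always accompanies $\mathcal X_3$, cf. Lemma \ref{pr1}, disposes of any remaining scruple about the very low frequencies).

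It then remains to assemble the three statements. For \eqref{est uper B}: by $T$-periodicity $\|u_{\text{per}}\|_{L^2(n,n+1;\dot{\mathcal B}^{\frac32}_{2,(\infty,1)})}\lesssim\|u_{\text{per}}\|_{\tilde L^2_{\text{per}}(\dot{\mathcal B}^{\frac32}_{2,(\infty,1)})}$ uniformly in $n$, $\|B\|_{L^\infty(n,n+1;\dot H^{1,\frac12})}\lesssim(n+1)^{-\frac{1-\varepsilon}2}\|B\|_{\mathcal X_3}$, and $u_{\text{per}}\in\tilde L^\infty_{\text{per}}\dot{\mathcal B}^{\frac12}_{2,(\infty,1)}$ supplies the $L^\infty_x$ ingredient; multiplying kills the weight in $\mathcal Y_2$. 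For \eqref{est u Bper}, symmetrically: $\|B_{\text{per}}\|_{L^\infty(n,n+1;H^{\frac12})}=\|B_{\text{per}}\|_{\tilde L^\infty_{\text{per}}(H^{\frac12})}$, and the weight is extracted from $(n+1)^{\frac{1-\varepsilon}2}\|u\|_{L^2(n,n+1;\dot{\mathcal B}^{\frac32}_{2,(\infty,1)})}\lesssim\|u\|_{\mathcal X_1}\le\|u\|_{\mathcal X}$, with $u\in\tilde L^\infty\dot{\mathcal B}^{\frac12}_{2,(\infty,1)}\subset\mathcal X$ giving the $L^\infty_x$ bound. For \eqref{est u B}: put $B\in\mathcal X_3$ in $L^\infty_t$ (gaining $(n+1)^{-\frac{1-\varepsilon}2}$), bound $u$ on each window by $\|u\|_{\tilde L^2(\dot{\mathcal B}^{\frac32}_{2,(\infty,1)})}$, and use $\tilde{\sup_{t>0}}\,t^{\frac12}\|u(t)\|_{\dot{\mathcal B}^{\frac32}_{2,(\infty,1)}}$ both to supply the $L^\infty_x$ ingredient (since $t^{-\frac12}\in L^2(n,n+1)$ for $n\ge1$) and to handle the window $n=0$.

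I expect the main obstacle to be the fixed-time estimate at low frequencies. No single Besov norm on the right-hand side is embedded into $L^\infty$, so closing the $T_u(\cdot)$ paraproduct requires exploiting the low-frequency ($\dot B^{\frac12}_{2,\infty}$-type) and high-frequency ($\dot B^{\frac32}_{2,1}$-type) parts of $u$ simultaneously, and one must show that $u\wedge B$ is square-integrable at the lowest frequencies -- which is exactly where the extra low-frequency derivative of $B$ (the index $1$ in $\dot H^{1,\frac12}$, resp. the inhomogeneous $H^{\frac12}$-regularity of $B_{\text{per}}$) is indispensable. All of this must be carried out with constants independent of $T$, which is the reason one argues on the unit windows $[n,n+1]$ rather than globally in time; the weight arithmetic itself, once the fixed-time inequality is in hand, is routine.
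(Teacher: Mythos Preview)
Your proposal is correct and follows essentially the same approach as the paper. The paper in fact only writes out the argument for \eqref{est uper B} (as the intermediate estimate \eqref{intermediate 1}) and declares the remaining cases easier; there it uses the Bony decomposition $T_{u_{\rm per}}B+T_Bu_{\rm per}+R(u_{\rm per},B)$, the embedding $\dot{\mathcal B}^{\frac12}_{2,(\infty,1)}\cap\dot{\mathcal B}^{\frac32}_{2,(\infty,1)}\hookrightarrow L^\infty$ for the first paraproduct, Bernstein on the dyadic blocks for the other two pieces (exploiting the extra low-frequency derivative on $B$ in $\dot H^{1,\frac12}$), and then H\"older in time on the windows $[n,n+1]$ with the weight $(n+1)^{\frac{1-\varepsilon}2}$ absorbed by $\|B\|_{\mathcal X_3}$---exactly the scheme you outline.
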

The second Lemma is to estimate bilinear terms in the Navier-Stokes equations 

\begin{lemma}
\label{lem-prod2} 
\begin{eqnarray}
\label{est u u} 
\|\nabla(u\otimes v)\|_{\mathcal Y_1}  \lesssim  
\|u \|_{\mathcal X} \|v \|_{\mathcal X}
\end{eqnarray}

\begin{eqnarray}
\label{est uper u} 
\|\nabla(u_{\text per}\otimes v)\|_{\mathcal Y_1}  \lesssim  
\|v \|_{\mathcal X_1}
\|u_{\text per} \|_{\tilde L^\infty_{\text per}(\dot{\mathcal B}^\frac12_{2,(\infty,1)})}
\end{eqnarray}

\begin{eqnarray}
\label{est Eper B}
\|E_{\text per}\wedge B \|_{\mathcal Y_1}\lesssim
\|E_{\text per}\|_{\tilde L^\infty(H^\frac12)}
\|B\|_{\mathcal X_3},
\end{eqnarray}

\begin{eqnarray}
\label{est E Bper}
\|E\wedge B_{\text per} \|_{\mathcal Y_1}\lesssim
\|E\|_{\mathcal X_2}
\|B_{\text per}\|_{\tilde L^\infty_{\text{per}}(H^\frac12)},
\end{eqnarray}

\begin{eqnarray}
\label{est E B} 
\|E\wedge B \|_{\mathcal Y_1}\lesssim   
\|E\|_{\mathcal X_2} 
\|B\|_{\tilde L^\infty H^\frac12 }
\end{eqnarray}

\end{lemma}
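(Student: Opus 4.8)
The plan is to prove the five estimates \eqref{est u u}--\eqref{est E B} by the same two-step scheme used for the earlier product lemmas: first a fixed-time (pointwise in $t$) product estimate between the relevant Besov-type spaces, obtained through Bony's decomposition $fg=T_fg+T_gf+R(f,g)$ exactly as in the proof of Lemma \ref{law prod lem Bes}; then a Hölder argument in the time variable to redistribute the decay weights $(t+1)^{\frac{1-\varepsilon}2}$ built into the spaces $\mathcal X_j$ and $\mathcal Y_j$. The governing bookkeeping fact is that $\mathcal Y_1$ and $\mathcal Y_2$ carry only a \emph{single} power of $(n+1)^{\frac{1-\varepsilon}2}$; so whenever both factors decay one extracts the weight from one factor (kept in its weighted $L^2(n,n+1;\cdot)$ norm) and controls the other by its unweighted $L^\infty(n,n+1;\cdot)$ norm, while when one factor is a periodic field $u_{\rm per}$, $E_{\rm per}$ or $B_{\rm per}$ — which carries no decay — the whole weight must be supplied by the non-periodic factor.

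\emph{Static product laws.} I would record and prove, at fixed $t$: (i) $\dot{\mathcal B}^{\frac32}_{2,(\infty,1)}\cdot\dot{\mathcal B}^{\frac12}_{2,(\infty,1)}\hookrightarrow\dot{\mathcal B}^{\frac12}_{2,(\infty,1)}$ at high frequencies, so that after one derivative one lands in $\dot B^{-\frac12}_{2,1}$, the high-frequency component of $\mathcal Y_1$; (ii) $H^{\frac12}\cdot H^{\frac12}\hookrightarrow L^2$ and $H^{\frac12}\cdot\dot H^{1,\frac12}\hookrightarrow L^2$ at high frequencies; and (iii) for the low-frequency part, the only contributions come from the low--low interaction, where the product lies in $L^1\hookrightarrow\dot B^{-\frac32}_{2,\infty}$ (both low-frequency blocks being in $L^2$, which is automatic for $H^{\frac12}$, $\dot H^{1,\frac12}$ and $\dot{\mathcal B}^{\frac12}_{2,(\infty,1)}$), and from the high--high interaction, treated by a direct Bernstein estimate; both are comfortably contained in the low-frequency piece $\dot B^{-\frac12-\varepsilon}_{2,\infty}$ of $\mathcal Y_1$ for every $\varepsilon<1$, and the extra $\nabla$ only improves matters at low frequencies (it gains a factor $2^k\le1$). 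The proofs of (i)--(ii) follow the pattern of Lemma \ref{law prod lem Bes}: $T_fg$ is bounded by $\|f\|_{L^\infty}\|g\|$ via $\dot{\mathcal B}^{\frac12}_{2,(\infty,1)}\cap\dot{\mathcal B}^{\frac32}_{2,(\infty,1)}\hookrightarrow L^\infty$ (note that for $u\in\mathcal X$ this combined norm is indeed available at a.e.\ time, the low frequencies coming from $\tilde L^\infty\dot{\mathcal B}^{\frac12}_{2,(\infty,1)}$ and the high ones from $L^2(n,n+1;\dot{\mathcal B}^{\frac32}_{2,(\infty,1)})$), while $T_gf$ and $R(f,g)$ are controlled by Bernstein's lemma and a Young-inequality summation as there.

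\emph{Time weights.} For \eqref{est u u} with $u,v\in\mathcal X$, write $u\otimes v$ and pair $(n+1)^{\frac{1-\varepsilon}2}\|u\|_{L^2(n,n+1;\dot{\mathcal B}^{\frac32}_{2,(\infty,1)})}$ against $\|v\|_{L^\infty(n,n+1;\dot{\mathcal B}^{\frac12}_{2,(\infty,1)})}$, and symmetrically, then apply $L^2_t\cdot L^\infty_t\hookrightarrow L^2_t$ on $(n,n+1)$ together with the static laws (i) and (iii), take the derivative (free at low frequencies, one power at high), and perform the $\ell^1$ sum over $k>0$ and the supremum over $k\le0$; this reproduces the $\mathcal Y_1$ norm with exactly one weight. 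For \eqref{est uper u} the same pairing is forced: $u_{\rm per}$ is only controlled in $\tilde L^\infty_{\rm per}\dot{\mathcal B}^{\frac12}_{2,(\infty,1)}$, with neither smoothing nor decay, so both the derivative and the weight must come from $v\in\mathcal X_1$ through $(n+1)^{\frac{1-\varepsilon}2}\|v\|_{L^2(n,n+1;\dot{\mathcal B}^{\frac32}_{2,(\infty,1)})}$. For \eqref{est Eper B}, \eqref{est E Bper} and \eqref{est E B} one combines (ii)--(iii) with an $L^\infty_t$ (weighted, from $\mathcal X_2$ resp.\ $\mathcal X_3$) times $L^2_t$ (the other factor, bounded on the unit interval by its $L^\infty_t$ norm) estimate; in \eqref{est E B} all the decay necessarily comes from $E\in\mathcal X_2$, since $B$ is only controlled in $\tilde L^\infty H^{\frac12}$.

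\emph{Main difficulty.} The delicate estimates are \eqref{est u u} and \eqref{est uper u}: one must hit the low-frequency exponent $-\frac12-\varepsilon$ and the high-frequency exponent $-\frac12$ of $\mathcal Y_1$ simultaneously, with only a single power of the weight. Keeping the weight single-powered rules out using the pointwise weighted norm $\tilde{\sup_{t>0}}(t+1)^{\frac{1-\varepsilon}2}\|u(t)\|_{\dot{\mathcal B}^{(\frac32-\varepsilon,\frac12)}_{2,(\infty,1)}}$ for both factors — that would produce two powers of $(t+1)^{\frac{1-\varepsilon}2}$, and it only carries the high-frequency smoothing $\frac12$ rather than the $\frac32$ one needs after taking $\nabla$ — so one is forced into the ``$L^2$-in-time $\dot{\mathcal B}^{\frac32}$ against $L^\infty$-in-time $\dot{\mathcal B}^{\frac12}$'' pairing and must check that this choice is consistent across the three pieces $T_uv$, $T_vu$, $R(u,v)$. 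Verifying that the $-\varepsilon$ loss at low frequencies is always absorbed — either by the $L^1\hookrightarrow\dot B^{-\frac32}_{2,\infty}$ gain in the low--low interaction or by the $2^k\le1$ gain from $\nabla$ — is the remaining point that needs care.
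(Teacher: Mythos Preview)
Your proposal is correct and follows essentially the same approach as the paper: Bony's paraproduct decomposition combined with Bernstein's lemma for the spatial product laws, and then a H\"older-in-time argument on each window $(n,n+1)$ to distribute the single decay weight $(n+1)^{\frac{1-\varepsilon}2}$. In fact the paper does not give a separate proof of Lemma~\ref{lem-prod2} at all; it only treats the two ``worst'' trilinear terms $(u_{\rm per}\wedge B_{\rm per})\wedge B$ and $(u\wedge B_{\rm per})\wedge B_{\rm per}$ of Lemma~\ref{lem-prod3} in detail and declares all the remaining estimates (including those of Lemma~\ref{lem-prod2}) easier, so your write-up is actually more explicit than the paper on this point.

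One small remark: your claim that ``both low-frequency blocks are in $L^2$'' for $\dot{\mathcal B}^{\frac12}_{2,(\infty,1)}$ should be read as ``each dyadic block $\Delta_j u$ is in $L^2$'' (which is of course automatic), not that the low-frequency part of $u$ itself lies in $L^2$---the latter is false for $\dot B^{\frac12}_{2,\infty}$. With that reading your $L^1\hookrightarrow\dot B^{-\frac32}_{2,\infty}$ argument for the remainder term at low frequencies goes through exactly as in the paper's treatment of $R(u,B)$ in Lemma~\ref{law prod lem Bes}, where the Bernstein gain $2^{\frac{3k}2}$ (plus the extra $2^k$ from $\nabla$) absorbs the $-\varepsilon$ loss since $\varepsilon<1$.
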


The last Lemma gives the estimates of the trilinear terms in the equation of the velocity vector field.

\begin{lemma}
\label{lem-prod3} 

\begin{eqnarray}
\label{est u Bper Bper} 
\|(u\wedge B_{\text per})\wedge B_{\text per} \|_{\mathcal Y_1}\lesssim   
\|u\|_{\mathcal X} 
\|B_{\text per}\|_{\tilde L^\infty(H^\frac12)}^2
\end{eqnarray}

\begin{eqnarray}
\label{est u B Bper} 
\|(u\wedge B)\wedge B_{\text per} \|_{\mathcal Y_1}\lesssim   
\|u\|_{\mathcal X}\|B\|_{L^\infty(H^\frac12)}\|B_{\text per}\|_{\tilde L^\infty(H^\frac12)}
\end{eqnarray}

\begin{eqnarray}
\label{est uper B B} 
\|(u_{\text per}\wedge B)\wedge B \|_{\mathcal Y_1}\lesssim   
\|u_{\text per}\|_{\tilde L^\infty_{\text per}(\dot{\mathcal B}^\frac12_{2,1})}  
\|B\|_{\mathcal X_3}\|B\|_{\mathcal X_3}
\end{eqnarray}

\begin{eqnarray}
\label{est uper B Bper} 
\|(u_{\text per}\wedge B)\wedge B_{\text per} \|_{\mathcal Y_1}\lesssim   
\|u_{\text per}\|_{\tilde L^\infty_{\text per}(\dot{\mathcal B}^\frac12_{2,(\infty,1)})\cap \tilde L^2_{\text per}(\dot{\mathcal B}^\frac32_{2,(\infty,1)})}  
\|B_{\text per}\|_{\tilde L^\infty(H^\frac12)}\|B\|_{\mathcal X_3}
\end{eqnarray}

\end{lemma}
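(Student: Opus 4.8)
The plan is to establish in full only the two estimates in which two of the three factors are non-decaying periodic fields, namely \eqref{est u Bper Bper} (two copies of $B_{\text{per}}$) and \eqref{est uper B Bper} (the term $(u_{\text{per}}\wedge B_{\text{per}})\wedge B$ already flagged as the worst interaction); the remaining three estimates contain at most one non-decaying factor, so the decay weight can be carried by that single factor measured in $L^\infty$ on each unit time interval and Hölder in time closes them by the same, but lighter, argument.

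For a generic trilinear quantity $(X\wedge Y)\wedge Z$ I would work in two stages: first bound the inner product $W:=X\wedge Y$ in a convenient intermediate space, then bound $W\wedge Z$ in $\mathcal Y_1$. Each bilinear step goes through Bony's decomposition $fg=T_fg+T_gf+R(f,g)$ together with the Littlewood--Paley characterisation of the spaces $\dot{\mathcal B}^{(s_1,s_2)}_{2,(q_1,q_2)}$, for which one systematically separates the regime $k\le 0$ (an $\ell^{q_1}$-sum with index $s_1$) from $k>0$ (an $\ell^{q_2}$-sum with index $s_2$); Bernstein's inequalities move between $L^2$ and $L^\infty$ (or $L^3$, $L^6$) and shift Besov indices, while the spatial product rules of Lemma~\ref{law prod lem Bes} and their immediate variants provide the building blocks, e.g. $H^{\frac12}\times H^{\frac12}\hookrightarrow \dot B^{-\frac12}_{2,1}$ and $(\dot B^{\frac12}_{2,(\infty,1)}\cap\dot B^{\frac32}_{2,(\infty,1)})\times H^{\frac12}\hookrightarrow H^{\frac12}$. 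Concretely, for \eqref{est u Bper Bper} I first place $B_{\text{per}}\wedge B_{\text{per}}\in H^{\frac12}$, then pair it with $u$ using $\dot{\mathcal B}^{\frac12}_{2,(\infty,1)}\cap\dot{\mathcal B}^{\frac32}_{2,(\infty,1)}\hookrightarrow L^\infty$ so that the outer product lands in $\dot{\mathcal B}^{(-\frac12-\varepsilon,-\frac12)}_{2,(\infty,1)}$; for \eqref{est uper B Bper} I first place $u_{\text{per}}\wedge B_{\text{per}}\in H^{\frac12}$ via Lemma~\ref{law prod lem Bes}, then pair with $B$, crucially exploiting that the low-frequency block of $B$ lies in $\dot H^{1}$ (the low-frequency component of $\dot H^{1,\frac12}$), which is exactly what renders the low-frequency remainder $R(\cdot,B)$ summable in negative regularity.

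The time bookkeeping is the heart of the matter. The unique decaying factor — $u\in\mathcal X_1$ in \eqref{est u Bper Bper}, $B\in\mathcal X_3$ in \eqref{est uper B Bper} — must carry the weight $(n+1)^{\frac{1-\varepsilon}{2}}$: on each interval $(n,n+1)$ one uses its $L^2(n,n+1;\cdot)$ norm (available directly from $\mathcal X_1$, and from $\mathcal X_3$ after $L^\infty(n,n+1)\hookrightarrow L^2(n,n+1)$), whereas the periodic factors are measured in $L^\infty(n,n+1;\cdot)$ and, being $T$-periodic, contribute a bound independent of $n$; Hölder in time then produces an $L^2(n,n+1)$ estimate for the product with the correct $(n+1)^{-\frac{1-\varepsilon}{2}}$ decay, i.e. the $\mathcal Y_1$ norm. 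The genuine obstacle is $u_{\text{per}}$ (and, in \eqref{est u Bper Bper}, $u$ itself): in dimension three $\tilde L^\infty_{\text{per}}\dot{\mathcal B}^{\frac12}_{2,(\infty,1)}$ just fails to embed in $L^\infty_tL^\infty_x$, so one cannot afford an $L^\infty_tL^\infty_x$ bound on $u_{\text{per}}$; instead the bookkeeping must be routed differently for its low- and high-frequency blocks, à la Chemin--Lerner — the low block handled by the $L^\infty_t$ norm in $\dot{\mathcal B}^{\frac12}$ (which, after extracting the Bernstein factor $2^{k}$, sums as a geometric series) and the high block by the $L^2_t$ norm in $\dot{\mathcal B}^{\frac32}$ (which carries one of the two time integrations needed to reach $L^2(n,n+1)$). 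Because the low-frequency piece of $u_{\text{per}}$ only sits in $\dot B^{\frac32}_{2,\infty}$ rather than in an $\ell^1$ space, forcing these low-frequency paraproduct and remainder terms to close costs an $\varepsilon$ loss of derivative, which is precisely why $\mathcal Y_1$ uses the index $-\frac12-\varepsilon$ and $\mathcal X_1$ the index $\frac32-\varepsilon$ at low frequencies. Once this routing is fixed and one verifies that the extra low-frequency regularity of $B$ ($\dot H^{1}$) absorbs the remainder's negative-regularity defect, the rest reduces to routine Bernstein estimates and summation of geometric series.
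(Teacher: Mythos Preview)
Your overall strategy (two-stage bilinear estimates via Bony's decomposition, with careful time bookkeeping on unit intervals) matches the paper, and you correctly single out the same two ``worst'' terms. There are, however, two concrete problems.

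\medskip

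\textbf{The intermediate space for \eqref{est u Bper Bper}.} You write that you place $B_{\rm per}\wedge B_{\rm per}$ in $H^{\frac12}$. Besides the obvious slip ($A\wedge A=0$; after BAC--CAB the relevant quantity is the scalar $B_{\rm per}\!\cdot\! B_{\rm per}$), the claimed space is wrong: in three dimensions $H^{\frac12}\times H^{\frac12}$ maps only into $\dot B^{-\frac12}_{2,1}$, not into $H^{\frac12}$. The paper therefore sets $F:=B_{\rm per}\!\cdot\! B_{\rm per}\in \dot B^{-\frac12}_{2,1}$ and then estimates $F\cdot u$ by paraproduct, using the $\dot B^{\frac32-\varepsilon}_{2,\infty}$ control on low frequencies of $u$ coming from $\mathcal X_1$. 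Your description of the second step (``pair with $u$ via $L^\infty$'') cannot work with $F$ merely in negative regularity: $T_F u$ and the remainder need $u$ measured in $\dot B^{\frac32-\varepsilon,\frac32}_{2,(\infty,1)}$, not just $L^\infty$, exactly as the paper does.

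\medskip

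\textbf{The ordering for \eqref{est uper B Bper}.} You propose to pair the two periodic factors first, $W:=u_{\rm per}\wedge B_{\rm per}\in L^2(n,n+1;H^{\frac12})$ (no decay), and then pair $W$ with $B\in\mathcal X_3$. This second step would require $H^{\frac12}\times \dot H^{1,\frac12}\hookrightarrow \dot{\mathcal B}^{(-\frac12-\varepsilon,-\frac12)}_{2,(\infty,1)}$, and this fails at low frequencies: the low-frequency control of $B$ from $\mathcal X_3$ is only $\dot H^1$, which is \emph{weaker} than $L^2$ there, and one checks that e.g.\ $2^{(-\frac12-\varepsilon)k}\|\Delta_k(T_WB)\|_{L^2}\lesssim 2^{-\varepsilon k}\|W\|_{L^2}\|B\|_{\dot H^1}$ blows up as $k\to-\infty$. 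The extra $\dot H^1$ regularity of $B$ at low frequencies does \emph{not} rescue the outer product here. The paper's route is to reverse the order: first estimate $u_{\rm per}\wedge B$ in the \emph{decaying} space $\sup_n(n+1)^{\frac{1-\varepsilon}{2}}L^2(n,n+1;H^{\frac12})$ (this is where the $\dot H^1$ low-frequency control of $B$ is actually used, together with $u_{\rm per}\in\tilde L^\infty\dot B^{\frac12}_{2,(\infty,1)}\cap\tilde L^2\dot B^{\frac32}_{2,(\infty,1)}\hookrightarrow L^\infty_{t,x}$), and only then pair with $B_{\rm per}\in\tilde L^\infty H^{\frac12}$ via the easy product $H^{\frac12}\times H^{\frac12}\to\dot B^{-\frac12}_{2,1}$. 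That ordering is not a matter of taste: it is what makes the low-frequency estimate close.
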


\begin{proof}
We only show how to estimate the worst terms $(u_{\rm{per}}\wedge B_{\rm{per}})\wedge B$ and $(u\wedge B_{\rm{per}})\wedge B_{\rm{per}}$ because all the other estimates are easier. It is enough to show the following
\begin{eqnarray}
\label{intermediate 1}
\sup_n(1+n)^\frac{1-\varepsilon}2\|u_{\rm{per}}\wedge B\|_{L^2(n,n+1;H^\frac12)}\lesssim \|u_{\rm{per}}\|_{\mathcal X_1}
\|B\|_{\mathcal X_3}
\end{eqnarray}
and 
\begin{eqnarray}
\label{intermediate 2}
\sup_n(1+n)^\frac{1-\varepsilon}2\|B_{\rm{per}}\wedge F\|_{\mathcal Y_1}\lesssim \|B_{\rm{per}}\|_{\tilde L^\infty(H^\frac12)}
\sup_n(1+n)^\frac{1-\varepsilon}2\|F\|_{L^2(n,n+1;H^\frac12)}
\end{eqnarray}
for $F=u\wedge B_{\rm{per}}$.

We begin by proving \eqref{intermediate 1}, and estimate the term $T_{u_{\rm{per}}}B$ in the para-product. First, as in the proof of Lemma \ref{law prod lem Bes}, we have
\begin{eqnarray*}
\sup_n(1+n)^\frac{1-\varepsilon}2\|T_{u_{\rm{per}}}B\|_{L^2(n,n+1;H^\frac12)}&\lesssim& \|u_{\rm{per}}\|_{L^\infty_{\rm{per}}(L^\infty)}
\tilde{\sup_t}(1+t)^\frac{1-\varepsilon}2\|B\|_{H^{\frac12}}\\
&\lesssim&\|u_{\rm{per}}\|_{\mathcal X_1}\tilde{\sup_t}(1+t)^\frac{1-\varepsilon}2\|B\|_{H^{\frac12}}.
\end{eqnarray*}
Next, for $k\leq0$, by Bernstein's lemma we have 
\begin{eqnarray*}
\sum_{j\leq k}2^j\|\Delta_jB\|_{L^2}2^\frac{j-k}22^k\|\Delta_ku_{\rm{per}}\|_{L^2}&\leq &\sum_{j\leq k}2^j\|\Delta_jB\|_{L^2}2^\frac{j-k}2  
\|u_{\rm{per}}\|_{\tilde L^\infty_{\rm{per}}(\dot B^\frac12_{2,(\infty,1)})}
\end{eqnarray*} 
while for $k\geq1$, we decompose further as follows
\begin{eqnarray}
\nonumber
\sum_{j\leq k}2^{\frac{3j}2}\|\Delta_jB\|_{L^2}2^\frac k2\|\Delta_ku\|_{L^2}&\leq &\sum_{j\leq0}2^j\|\Delta_jB\|_{L^2}2^\frac j2  2^\frac k2\|\Delta_ku_{\rm{per}}\|_{L^2}\\
&+&\sum_{0\leq j\leq k}2^\frac j2\|\Delta_jB\|_{L^2}2^{j-k}  2^\frac{3k}2\|\Delta_ku_{\rm{per}}\|_{L^2}
\nonumber
\end{eqnarray}
which thanks to Young's inequality and after integration in time give

\begin{eqnarray*}
\sup_n(1+n)^\frac{1-\varepsilon}2\|T_Bu_{\rm{per}}\|_{L^2(n,n+1;H^\frac12)}    &\lesssim& \|u_{\rm{per}}\|_{\tilde L^\infty_{\rm{per}}(\dot B^\frac12_{2,(\infty,1)})}
\sup_n(1+n)^\frac{1-\varepsilon}2\|B\|_{L^2(n,n+1;\dot H^{1,\frac12})}\\
&+&\|u_{\rm{per}}\|_{\tilde L^\infty_{\rm{per}}(\dot B^\frac32_{2,(\infty,1)})}\tilde{\sup_t}(1+t)^\frac{1-\varepsilon}2\|B\|_{H^{\frac12}}.
\end{eqnarray*}
Thus
$$
\sup_n(1+n)^\frac{1-\varepsilon}2\|T_Bu_{\rm{per}}\|_{L^2(n,n+1;H^\frac12)}\lesssim \|u_{\rm{per}}\|_{\mathcal X_1} \|B\|_{\mathcal X_3}.
$$
Finally we estimate the rest $R(u_{\rm{per}},B)$ only when $k\leq0$ because the case $k\geq1$ is easier. By Bernstein's lemma we have 
\begin{eqnarray*}
2^\frac k2\sum_{j\geq k-2}2^{\frac{3k}2}\|\Delta_jB\|_{L^2}\|\Delta_ju_{\rm{per}}\|_{L^2}&\leq &2^{2k}\left(\sum_{j\leq 0}(\cdot)+\sum_{k-2\leq j\leq0}(\cdot)\right)\\
&\lesssim&
\|u_{\rm{per}}\|_{\dot B^{\frac{1}{2}} _{2,(\infty,1)}} \|B\|_{L^2}2^{2k}\\&+&\sum_{j\geq k-2}2^{\frac{3j}2}\|\Delta_ju_{\rm{per}}\|_{L^2} 2^\frac j2 \|\Delta_jB\|_{L^2}2^{2(k-j)}.
\end{eqnarray*}
This concludes that
$$
\sup_n(1+n)^\frac{1-\varepsilon}2\|R(B,u_{\rm{per}}) \|_{L^2(n,n+1;H^\frac12)}\lesssim \|u_{\rm{per}}\|_{\mathcal X_1} \|B\|_{\mathcal X_3}.
$$
Now we show \eqref{intermediate 2}. For $k\leq0$, by Bernstein's lemma we have 
\begin{eqnarray*}
2^{-k(\frac12+\varepsilon)}\|\Delta_kT_{B_{\rm{per}}}F\|_{L^2}\leq2^\frac k2\|\Delta_kF\|_{L^2}\sum_{j\leq k}2^{j(\frac12-\varepsilon)}\|\Delta_jB_{\rm{per}}\|_{L^2}2^{(j-k)(1+\varepsilon)}.
\end{eqnarray*} 
Additionally, when $k\geq1$, we estimate as follows

\begin{eqnarray*}
2^{-\frac k2}\|\Delta_kT_{B_{\rm{per}}}F\|_{L^2}&\lesssim&2^\frac k2\|\Delta_kF\|_{L^2}\left(\|B_{\rm{per}}\|_{L^2}+\sum_{0\leq j\leq k}2^{\frac j2}\|\Delta_jB_{\rm{per}}\|_{L^2}2^{j-k}\right)\\
&\lesssim&2^\frac k2\|\Delta_kF\|_{L^2}\|B_{\rm{per}}\|_{H^\frac12}.
\end{eqnarray*} 
The last two estimates combined give
$$
\sup_n(1+n)^\frac{1-\varepsilon}2\|T_{B_{\rm{per}}}F) \|_{L^2(n,n+1;\dot B^{-\frac12,-\frac12}_{2,(\infty,1)})}\lesssim 
\sup_n(1+n)^\frac{1-\varepsilon}2\|F \|_{L^2(n,n+1;H^\frac12)}
 \|B_{\rm{per}}\|_{\tilde L^\infty(H^\frac12)}.
$$
Since the force $F$ and the magnetic field $B$ share the same space regularity, then the same analysis in above implies
$$
\sup_n(1+n)^\frac{1-\varepsilon}2\|T_F{B_{\rm{per}}}) \|_{L^2(n,n+1;\dot B^{-\frac12,-\frac12}_{2,(\infty,1)})}\lesssim 
\sup_n(1+n)^\frac{1-\varepsilon}2\|F \|_{L^2(n,n+1;H^\frac12)}
 \|B_{\rm{per}}\|_{\tilde L^\infty(H^\frac12)}.
$$
Finally the remaining term is estimated thanks to the following observation
\begin{eqnarray*}
2^{-k(\frac12+\varepsilon)}
\sum_{j\geq k-2}2^{\frac{3k}2}\|\Delta_jB_{\rm{per}}\|_{L^2}\|\Delta_jF\|_{L^2}&\lesssim&
\sum_{j\geq k-2}2^{j(\frac12-\varepsilon)}\|\Delta_jB_{\rm{per}}\|_{L^2} 2^\frac j2\|\Delta_jF\|_{L^2}2^{(k-j)(1-\varepsilon)}.
\end{eqnarray*} 
Now we move to the term $(u\wedge B_{\rm{per}})\wedge B_{\rm{per}}$, and begin by the recalling that
$$
H^\frac12\times H^\frac12\hookrightarrow B^{-\frac12}_{2,1}(\R^3).
$$
Hence, $F:=B_{\rm{per}} \cdot B_{\rm{per}}$ belongs to $B^{-\frac12}_{2,1}(\R^3)$. Next we use again para-product to estimate $F\wedge u$.
For $k\leq0$, by Bernstein's lemma we have 
\begin{eqnarray}
\label{low u high F1}
2^{-k(\frac12+\varepsilon)}\|\Delta_kT_uF\|_{L^2}\leq2^{-\frac k2}\|\Delta_kF\|_{L^2}\sum_{j\leq k}2^{j(\frac32-\varepsilon)}\|\Delta_ju\|_{L^2}2^{(j-k)\varepsilon},
\end{eqnarray} 
and, similarly, when $k\geq1$ we estimate as follows
\begin{eqnarray*}
2^{-\frac k2}\|\Delta_kT_uF\|_{L^2}&\lesssim&2^{-\frac k2}\|\Delta_kF\|_{L^2}\left(\sum_{ j\leq 0}2^{j(\frac32-\varepsilon)}\|\Delta_ju\|_{L^2}2^{j\varepsilon}+\sum_{0\leq j\leq k}2^{\frac {3j}2}\|\Delta_ju\|_{L^2}\right)
\end{eqnarray*} 
giving

\begin{eqnarray}
\label{low u high F2}
2^{-\frac k2}\|\Delta_kT_uF\|_{L^2(n,n+1;L^2)}
\lesssim2^{-\frac k2}\|\Delta_kF\|_{L^2}\|u\|_{\tilde L^2(n,n+1;\dot B^{\frac32-\varepsilon,\frac32}_{2,(\infty,1)}}.
\end{eqnarray} 
Finally, for $k\leq0$, we have
\begin{eqnarray}
\label{low F high u1}
2^{-k(\frac12+\varepsilon)}\|\Delta_kT_Fu\|_{L^2}\leq2^{k(\frac32-\varepsilon)}\|\Delta_ku\|_{L^2}\sum_{j\leq k}2^{-\frac j2}\|\Delta_jF\|_{L^2}2^{2(j-k)},
\end{eqnarray} 
and, similarly, when $k\geq1$ we have
\begin{eqnarray}
\label{low F high u2}\\
\nonumber
2^{-\frac k2}\|\Delta_kT_uF\|_{L^2}&\lesssim&2^{\frac{3k}2}\|\Delta_ku\|_{L^2}\left(2^{-2k}\|F\|_{\dot B^{-\frac12}_{2,1}}
+\sum_{0\leq j\leq k}2^{-\frac j2}\|\Delta_jF\|_{L^2}2^{2(j-k)}.
\right)
\end{eqnarray} 
Putting together \eqref{low F high u1}, \eqref{low F high u1}, \eqref{low F high u1} and \eqref{low F high u2} finishes the proof of the Lemma.

\end{proof}

\subsection{End of the proof of Theorem \ref{MAIN2}}
\noindent
 \begin{proof}\textsc{of the proposition \ref{contraction}} First, notice that $\Phi\left( - e^{t\mathcal{A}} \Gamma^0_{\rm err} \right) = 0$, while by Lemma~\ref{ghr} and Lemma~\ref{pr1}, we have
 
\begin{equation}
\label{tree}
\left\| e^{t\mathcal{A}} \Gamma^0_{\rm err}  \right\|_{\mathcal{X}} \leq C \left\| \Gamma^0 \right\|_{\dot {\mathcal B}^{\frac12}_{2,(\infty,1)}\times H^{\frac12}
\times H^{\frac12}} \leq C \kappa \delta \leq \frac{\delta}{2}
\end{equation}
for $\kappa$ small enough. On the other hand, in below we will prove  that, if $\Gamma^{(1)}$ and $\Gamma^{(2)}$ belong to $B_\delta$, then under the assumptions of the claim, we have
\begin{equation}
\label{arbre}
\left\| \Phi(\Gamma^{(1)}) - \Phi(\Gamma^{(2)}) \right\|_{\mathcal{X}} \leq \frac{1}{2} \left\| \Gamma^{(1)} - \Gamma^{(2)} \right\|_{\mathcal{X}}.
\end{equation}
Then estimates~\eqref{tree} and~\eqref{arbre} easily yield the claim of proposition \ref{contraction}.

\bigskip

To prove~(\ref{arbre}), let $\Gamma^{(j)}:=(u_j,E_j,B_j)^T\in B_{\delta}$ for $j=1,2$. Write further
$$
e^{t{\mathcal A}}\Gamma^0_{\rm err}+\Gamma^{(j)}(t)=(\bar u_j,\bar E_j,\bar B_j)^T
$$
and set $\Gamma:=\Gamma^{(1)}-\Gamma^{(2)}:=(u,E,B)^T$, and 
$\Phi(\Gamma^{(j)}):=\tilde \Gamma^{(j)} = ({\tilde u}_j,{\tilde E}_j,{\tilde B}_j)^T$ be given 
by \eqref{contraction map}. 
Finally, let $\tilde\Gamma:=\tilde\Gamma^{(1)}-\tilde\Gamma^{(2)}
:=({\tilde u},{\tilde E},{\tilde B})^T$.\\ 
We decompose  ${\tilde u}$ into ${\tilde u}=\tilde u^{(a)}+ \tilde u^{(b)}$, with $\tilde u^{(a)}$ accounting 
for the convection term
\begin{eqnarray}
\nonumber
\tilde u^{(a)}:&=&-\int_0^te^{(t-t')\Delta}\mathbb{P}
\nabla(u_1\otimes  u +  u\otimes u_2+u\otimes  u_{\rm per} +  u_{\rm per}\otimes u)\;dt'\\
\nonumber
&+&\int_0^te^{(t-t')\Delta}{\mathbb{P}}(E_1\wedge B+E\wedge B_2+E\wedge B_{\rm per}+E_{\rm per}\wedge B)\;dt'\\
\nonumber
&+&\int_0^te^{(t-t')\Delta}{\mathbb{P}}((u\wedge B_1)\wedge B_1+(u_2\wedge E)\wedge B_1+(u_2\wedge B_2)\wedge B)\;dt'\\
\nonumber
&+&\int_0^te^{(t-t')\Delta}\mathbb{P}((u\wedge B_{\rm per})\wedge B_{\rm per})\;dt'
\end{eqnarray}
and 
\begin{eqnarray*}
\tilde u^{(b)}:&=&\int_0^te^{(t-t')\Delta}\mathbb{P}((u_{\rm per}\wedge B)\wedge B_{\rm per}+(u_{\rm per}\wedge B_{\rm per})\wedge B)\;dt'\\
\nonumber
\end{eqnarray*}
Moreover, the electro-magnetic field $(\tilde E,\tilde B)$ satisfies
\begin{eqnarray}
\label{max}
\partial_t \tilde E-\nabla\wedge \; \tilde B+\tilde E&=&u_1\wedge B+  u\wedge B_2+u\wedge B_{\rm per}+u_{\rm per}\wedge B\\
\partial_t \tilde B+\nabla\wedge \; \tilde E&=&0\nonumber
\end{eqnarray}
with $0$ initial data. Applying Lemmas \ref{lem-prod1}, \ref{lem-prod2} and \ref{lem-prod3} immediately gives estimates~\eqref{tree} and~\eqref{arbre} and thus finishes the proof of the proposition \ref{contraction}.
\end{proof}

\clearpage
\appendix
\addcontentsline{toc}{part}{Appendix}

\section{Spectral properties of Maxwell's operator}

Here, we detail the linear analysis of Maxwell's system \eqref{Maxwell}, whose spectral decomposition will be essential for the proof of Lemma \ref{pr1}.

Clearly, Maxwell's system \eqref{Maxwell} may be recast as
\begin{equation*}
	\partial_t
	\begin{pmatrix}
		E\\B
	\end{pmatrix}
	=\mathcal{L}
	\begin{pmatrix}
		E\\B
	\end{pmatrix}
	+
	\begin{pmatrix}
		G\\0
	\end{pmatrix},
\end{equation*}
where Maxwell's operator $\mathcal{L}$ is given by
\begin{equation*}
	\mathcal{L}:=
	\begin{pmatrix}
		-\mathrm{Id} & \nabla\wedge \\
		-\nabla\wedge & 0 \\
	\end{pmatrix}.
\end{equation*}
More precisely, the operator
\begin{equation*}
	\mathcal{L}: \mathcal{D}\left(\mathcal{L}\right)\subset X\rightarrow X,
\end{equation*}
is defined as an unbounded linear operator, where
\begin{equation*}
	X:=\{ \left(E,B\right)\in \left(L^2\left(\mathbb{R}^3\right)\right)^2 \quad\mbox{such that}\quad { \text{div} B = 0 }\},
\end{equation*}
whose domain is given by
\begin{equation*}
	\mathcal{D}\left(\mathcal{L}\right):=\{ \left(E,B\right)\in X \quad \left(\mathbb{P}E,B\right)\in \left(H^1\left(\mathbb{R}^3\right)\right)^2 \},
\end{equation*}
where $\mathbb{P}:L^2\left(\mathbb{R}^3\right)\rightarrow L^2\left(\mathbb{R}^3\right)$ denotes the Leray projector over solenoidal vector fields. 

Next, in order to refine our understanding of the action of the semigroup and the ensuing behaviour of the electromagnetic field $(E,B)$, we conduct a spectral analysis of $\mathcal{L}$.
Since, it has constant coefficients, we use the Fourier transform, which is denoted by
\begin{equation*}
	\mathcal{F}f(\xi) = \hat f(\xi) := \int_{\mathbb{R}^d} e^{-i\xi\cdot x} f(x) dx,
\end{equation*}
and its inverse by
\begin{equation*}
	\mathcal{F}^{-1}g(x) = \tilde g(x) := \frac {1}{\left(2\pi\right)^d}\int_{\mathbb{R}^d} e^{i x\cdot \xi} g(\xi) d\xi.
\end{equation*}

For every $\xi\in\mathbb R^3\setminus\{0\}$, we further define the subspace
\begin{equation*}
	\begin{aligned}
		\mathcal{E}(\xi) & :=
		\{\left( e , b \right) \in \mathbb{C}^3\times\mathbb{C}^3\quad\mbox{such that}\quad{ \xi \cdot b =0}\},
	\end{aligned}
\end{equation*}
and the linear finite-dimensional operator $\hat{\mathcal{L}}(\xi):\mathcal{E}(\xi)\rightarrow\mathcal{E}(\xi)$ by
\begin{equation*}
	\hat{\mathcal{L}}(\xi)
	\begin{pmatrix}
		e\\b
	\end{pmatrix}
	:=
	\begin{pmatrix}
		-  e + i \xi\wedge b \\ -i \xi \wedge e
	\end{pmatrix}.
\end{equation*}

Clearly, $\mathcal{E}(\xi)$ is a $5$-dimensional vector subspace of $\mathbb{C}^3\times\mathbb{C}^3$ and any $(E,B)\in X$ satisfies that $\left(\hat E(\xi),\hat B(\xi)\right)\in \mathcal{E}(\xi)$, for almost every $\xi\in\mathbb{R}^3$.
Finally, note that, for any $(E,B)\in X$,
\begin{equation*}
	\mathcal{F}\left(
	\mathcal{L}
	\begin{pmatrix}
		E\\B
	\end{pmatrix}\right)(\xi)
	=
	\begin{pmatrix}
		- \hat E + i \xi\wedge \hat B \\ -i \xi \wedge \hat E
	\end{pmatrix}
	=
	\hat{\mathcal{L}}(\xi)
	\begin{pmatrix}
		\hat E \\ \hat B
	\end{pmatrix},
\end{equation*}
and
\begin{equation*}
	\mathcal{F}\left(
	e^{t\mathcal{L}}
	\begin{pmatrix}
		E\\B
	\end{pmatrix}\right)(\xi)
	=
	e^{t\hat{\mathcal{L}}(\xi)}
	\begin{pmatrix}
		\hat E \\ \hat B
	\end{pmatrix}.
\end{equation*}
Then, we have the following properties proven in \cite{AIM}.

\begin{proposition}\label{pr11}
	For $|\xi|\neq \frac{1}{2}$
	the distinct eigenvalues of $\hat{\mathcal{L}}(\xi)$ are $\lambda_0=-1$, $\lambda_+(\xi)$ and $\lambda_-(\xi)$,
	with
	\begin{equation}\label{roots}
		\lambda_\pm (\xi)=
		\frac {-1 \pm \sqrt{ 1 - 4 |\xi|^2}} {2}.
	\end{equation}
	Furthermore, there exists a basis of eigenvectors (i.e. $\hat{\mathcal{L}}(\xi)$ is diagonalizable)
	and the eigenspaces corresponding to $\lambda_0$, $\lambda_+(\xi)$ and $\lambda_-(\xi)$ are respectively given by
	\begin{equation*}
		\begin{aligned}
			{\mathcal E}_0(\xi) & =\left\langle
			\begin{pmatrix}
				\xi \\ 0
			\end{pmatrix}
			\right\rangle, \\
			{\mathcal E}_+(\xi) & =
			\set{
			\begin{pmatrix}
				e\\ \frac{-i}{\lambda_+}\xi\times e
			\end{pmatrix}\in\mathbb{C}^3\times\mathbb{C}^3
			}{e\in\mathbb{C}^3,\ \xi\cdot e=0} \\
			& = \set{
			\begin{pmatrix}
				\frac{-i}{\lambda_-}\xi\times b \\ b
			\end{pmatrix}\in\mathbb{C}^3\times\mathbb{C}^3
			}{b\in\mathbb{C}^3,\ \xi\cdot b=0,} \\
			{\mathcal E}_-(\xi) & =
			\set{
			\begin{pmatrix}
				e\\ \frac{-i}{\lambda_-}\xi\times e
			\end{pmatrix}\in\mathbb{C}^3\times\mathbb{C}^3
			}{e\in\mathbb{C}^3,\ \xi\cdot e=0} \\
			& = \set{
			\begin{pmatrix}
				\frac{-i}{\lambda_+}\xi\times b \\ b
			\end{pmatrix}\in\mathbb{C}^3\times\mathbb{C}^3
			}{b\in\mathbb{C}^3,\ \xi\cdot b=0 }.
		\end{aligned}
	\end{equation*}
	
	For any $\xi\in\mathbb R^3\setminus\{0\}$, such that $|\xi|= \frac{1}{2}$ ,
	the distinct eigenvalues of ${\mathcal L}(\xi)$ are $\lambda_0=-1$ and $\lambda_1=-\frac{1}{2}$.
	Furthermore, $\hat{\mathcal{L}}(\xi)$ is not diagonalizable
	and the eigenspaces corresponding to $\lambda_0$ and $\lambda_1$ are respectively given by
	\begin{equation*}
		\begin{aligned}
			{\mathcal E}_0(\xi) & =\left\langle
			\begin{pmatrix}
				\xi \\ 0
			\end{pmatrix}
			\right\rangle, \\
			{\mathcal E}_1(\xi) & =
			\set{
			\begin{pmatrix}
				e\\ \frac{2i}{\sigma c}\xi\times e
			\end{pmatrix}\in\mathbb{C}^3\times\mathbb{C}^3
			}{e\in\mathbb{C}^3,\ \xi\cdot e=0\text{ (and $e_3=0$ if $d=2$)} } \\
			& = \set{
			\begin{pmatrix}
				\frac{2i}{\sigma c}\xi\times b \\ b
			\end{pmatrix}\in\mathbb{C}^3\times\mathbb{C}^3
			}{b\in\mathbb{C}^3,\ \xi\cdot b=0\text{ (and $b_1=b_2=0$ if $d=2$)} }.
		\end{aligned}
	\end{equation*}
	The generalized eigenspace corresponding to $\lambda_1$ is given by
	\begin{equation*}
			\mathcal{K}_1(\xi)=\set{
			\begin{pmatrix}
				e\\ b
			\end{pmatrix}\in\mathbb{C}^3\times\mathbb{C}^3
			}{\xi\cdot e=\xi\cdot b=0\text{ (and $e_3=b_1=b_2=0$ if $d=2$)} }.
	\end{equation*}
\end{proposition}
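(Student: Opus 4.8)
The plan is to reduce the $5$-dimensional problem for $\hat{\mathcal L}(\xi)$ to one trivial $1\times1$ block together with two $2\times2$ blocks, by first splitting off the electrostatic direction and then complex-diagonalising the curl on $\xi^\perp$. First I would observe that $\langle(\xi,0)\rangle$ is $\hat{\mathcal L}(\xi)$-invariant, with $\hat{\mathcal L}(\xi)(\xi,0)=(-\xi,0)$ since $\xi\wedge\xi=0$; this yields the eigenvalue $\lambda_0=-1$ and the eigenspace $\mathcal E_0(\xi)$. Its complement inside $\mathcal E(\xi)$, namely $\mathcal E_\perp(\xi):=\{(e,b):\xi\cdot e=\xi\cdot b=0\}$, is $4$-dimensional and also $\hat{\mathcal L}(\xi)$-invariant, because both $-e+i\xi\wedge b$ restricted to the condition $\xi\cdot e=0$ stays orthogonal to $\xi$ and $-i\xi\wedge e$ is automatically orthogonal to $\xi$. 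Since $\dim\mathcal E(\xi)=5=1+4$, it remains to analyse $\hat{\mathcal L}(\xi)$ on $\mathcal E_\perp(\xi)$.

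Next I would diagonalise the curl on $\xi^\perp$. Choosing an orthonormal frame $(f_1,f_2)$ of $\xi^\perp$ with $\xi\wedge f_1=|\xi|f_2$ and $\xi\wedge f_2=-|\xi|f_1$, the vectors $w_\pm:=f_1\mp if_2$ satisfy $\xi\wedge w_\pm=\pm i|\xi|\,w_\pm$. Consequently $\mathcal E_\perp(\xi)=V_+\oplus V_-$ with $V_\pm:=\langle(w_\pm,0),(0,w_\pm)\rangle$, each $\hat{\mathcal L}(\xi)$-invariant, and on $V_\pm$, writing $(e,b)=(\alpha w_\pm,\beta w_\pm)$, the operator is represented by the $2\times2$ matrix $\begin{pmatrix}-1&\mp|\xi|\\ \pm|\xi|&0\end{pmatrix}$. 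In both cases the characteristic polynomial is $\mu^2+\mu+|\xi|^2$, whose roots are precisely $\lambda_\pm(\xi)=\frac{-1\pm\sqrt{1-4|\xi|^2}}{2}$, with $\lambda_++\lambda_-=-1$ and $\lambda_+\lambda_-=|\xi|^2$ by Vieta.

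For $|\xi|\neq\frac12$ one has $\lambda_+\neq\lambda_-$, so each $2\times2$ block is diagonalisable; hence $\hat{\mathcal L}(\xi)$ is diagonalisable with eigenvalues $-1$ (multiplicity $1$), $\lambda_+$ and $\lambda_-$ (each multiplicity $2$, contributing one eigenvector from $V_+$ and one from $V_-$). To recover the stated eigenspaces I would solve $\hat{\mathcal L}(\xi)(e,b)=\lambda(e,b)$ on each block: the second component gives $-i\xi\wedge e=\lambda b$, i.e. $b=\frac{-i}{\lambda}\xi\wedge e$, while the identity $\xi\wedge(\xi\wedge e)=-|\xi|^2e$ (valid since $\xi\cdot e=0$) together with $\lambda_+\lambda_-=|\xi|^2$ converts this into the equivalent form $e=\frac{-i}{\lambda_\mp}\xi\wedge b$ when $\lambda=\lambda_\pm$; letting $e$ (equivalently $b$) run over all of $\xi^\perp$ sweeps out the full $2$-dimensional space, which is exactly the description of $\mathcal E_\pm(\xi)$.

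Finally, for $|\xi|=\frac12$ one has $\lambda_+=\lambda_-=-\frac12=:\lambda_1$, and each block matrix $\begin{pmatrix}-1&\mp\frac12\\ \pm\frac12&0\end{pmatrix}$ equals $-\frac12\mathrm{Id}$ plus a nonzero nilpotent of square zero, hence is a single Jordan block; therefore $\hat{\mathcal L}(\xi)$ is not diagonalisable, the eigenspace $\mathcal E_1(\xi)=\{(e,\frac{-i}{\lambda_1}\xi\wedge e):\xi\cdot e=0\}=\{(e,2i\xi\wedge e):\xi\cdot e=0\}$ has dimension $2$, and since $(\hat{\mathcal L}(\xi)+\frac12\mathrm{Id})^2$ vanishes on $V_+\oplus V_-$, the generalised eigenspace of $\lambda_1$ is the whole $\mathcal E_\perp(\xi)=\{(e,b):\xi\cdot e=\xi\cdot b=0\}=\mathcal K_1(\xi)$; the $d=2$ parenthetical conditions come from restricting $f_1,f_2$ to the plane. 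The only genuinely delicate point is the bookkeeping between the two equivalent parametrisations of $\mathcal E_\pm(\xi)$ and the verification that they coincide via $\lambda_+\lambda_-=|\xi|^2$; everything else is routine finite-dimensional linear algebra, so I do not anticipate a serious obstacle.
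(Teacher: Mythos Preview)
Your argument is correct and complete: the splitting of $\mathcal E(\xi)$ into the electrostatic line $\langle(\xi,0)\rangle$ and the $4$-dimensional invariant complement $\mathcal E_\perp(\xi)$, followed by the further decomposition $V_+\oplus V_-$ via the curl eigenvectors $w_\pm$, reduces everything to a pair of $2\times2$ blocks whose characteristic polynomial is $\mu^2+\mu+|\xi|^2$; the Vieta identity $\lambda_+\lambda_-=|\xi|^2$ together with $\xi\wedge(\xi\wedge e)=-|\xi|^2 e$ indeed yields both parametrisations of $\mathcal E_\pm(\xi)$, and the nilpotency check at $|\xi|=\tfrac12$ is accurate.

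There is nothing to compare against here: the paper does not supply its own proof of this proposition but simply cites \cite{AIM} (Ars\'enio--Ibrahim--Masmoudi), where the result is established. Your self-contained proof is therefore more than what the paper offers. One minor remark: the factor $\tfrac{2i}{\sigma c}$ appearing in the statement for $|\xi|=\tfrac12$ is a vestige of the unnormalised system in \cite{AIM}; in the present paper $\sigma=c=1$, so your $2i\,\xi\wedge e$ is the correct expression. You might also add one sentence verifying that $\lambda_\pm\neq -1$ (immediate from $\lambda_++\lambda_-=-1$ and $\lambda_+\lambda_-=|\xi|^2\neq 0$) to fully justify that the three eigenvalues are distinct when $|\xi|\neq\tfrac12$.
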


\begin{rem}
	One easily verifies that the operator $\hat{\mathcal{L}}(\xi)$ is not normal (i.e. it does not commute with its adjoint) and, therefore, the eigenspaces $\mathcal{E}_0(\xi)$, $\mathcal{E}_+(\xi)$ and $\mathcal{E}_-(\xi)$ are not all orthogonal to each other. However, it is seen that each $\mathcal{E}_\pm(\xi)$ is orthogonal to $\mathcal{E}_0(\xi)$.
\end{rem}

\begin{lemma}\label{lambda}
	Let $\xi\in\mathbb{R}^3$ and consider the eigenvalues $\lambda_\pm(\xi)$ defined by \eqref{roots}.
	Then, if $|\xi|\leq \frac{1}{2}$,
	\begin{equation*}
		\begin{gathered}
			-1\leq \lambda_- (\xi)\leq -\frac{1}{2} \leq -|\xi|\leq - {2|\xi|^2}\leq\lambda_+ (\xi)\leq - {|\xi|^2}, \\
			\text{and}\qquad
			\sqrt{ 1 - \left({2 |\xi|}\right)^2}\leq
			\frac{\lambda_-(\xi)-\lambda_+(\xi)}{\lambda_-(\xi)}
			\leq 2 \sqrt{ 1 - \left({2 |\xi|}\right)^2},
		\end{gathered}
	\end{equation*}
	and, if $|\xi|\geq \frac{1}{2}$,
	\begin{equation*}
		\begin{gathered}
			\Re \left(\lambda_-(\xi)\right) = \Re \left(\lambda_+(\xi)\right) =  - \frac {1}2,
			\qquad
			\left|\lambda_+(\xi)\right| = \left|\lambda_-(\xi)\right| = |\xi|, \\
			\text{and}\qquad
			\left|\frac{\lambda_-(\xi)-\lambda_+(\xi)}{\lambda_-(\xi)}\right|=
			2\sqrt{1-\left(\frac{1}{2|\xi|}\right)^2}.
		\end{gathered}
	\end{equation*}
\end{lemma}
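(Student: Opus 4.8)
The plan is to treat the two regimes $|\xi|\le\frac12$ and $|\xi|\ge\frac12$ separately, since the roots $\lambda_\pm$ in \eqref{roots} are real in the first case and complex conjugate in the second. Throughout I write $r:=|\xi|$. The one algebraic fact that drives everything is the product relation: since $\lambda_+,\lambda_-$ are the roots of $X^2+X+r^2=0$, we have $\lambda_++\lambda_-=-1$ and $\lambda_+\lambda_-=r^2$.

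First suppose $0\le r\le\frac12$. Then $1-4r^2\in[0,1]$, so $s:=\sqrt{1-4r^2}=\sqrt{1-(2r)^2}\in[0,1]$ and $\lambda_\pm=\frac{-1\pm s}{2}$ are real. From $s\in[0,1]$ one reads off immediately that $\lambda_-=\frac{-1-s}{2}\in[-1,-\tfrac12]$, which is the chain of inequalities for $\lambda_-$. For $\lambda_+$, rather than estimating $\frac{-1+s}{2}$ by hand I would use $\lambda_+=r^2/\lambda_-$; since $\lambda_-\in[-1,-\tfrac12]$ this gives $\lambda_+\in[-2r^2,-r^2]$. The two remaining links $-\tfrac12\le-r$ and $-r\le-2r^2$ are both equivalent to $r\le\frac12$. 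Finally $\lambda_--\lambda_+=-s$, hence $\frac{\lambda_--\lambda_+}{\lambda_-}=\frac{2s}{1+s}$, and since $1+s\in[1,2]$ we get $s\le\frac{2s}{1+s}\le 2s$, which is exactly the two-sided bound once one recalls $s=\sqrt{1-(2r)^2}$.

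Next suppose $r\ge\frac12$. Then $1-4r^2\le 0$ and we write $\sqrt{1-4r^2}=i\sqrt{4r^2-1}$, so $\lambda_\pm=\frac{-1\pm i\sqrt{4r^2-1}}{2}$; in particular $\Re\lambda_\pm=-\tfrac12$. Moreover $\lambda_-=\overline{\lambda_+}$, so $\lambda_+\lambda_-=|\lambda_+|^2=|\lambda_-|^2$, and comparing with $\lambda_+\lambda_-=r^2$ gives $|\lambda_\pm|=r=|\xi|$. Lastly $\lambda_--\lambda_+=-i\sqrt{4r^2-1}$, whence $\left|\frac{\lambda_--\lambda_+}{\lambda_-}\right|=\frac{\sqrt{4r^2-1}}{r}=\sqrt{4-r^{-2}}=2\sqrt{1-(2r)^{-2}}$, as asserted.

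I do not expect any genuine obstacle here: the estimates are elementary. The only things to watch are keeping the directions of the inequalities straight and systematically using the product identity $\lambda_+\lambda_-=r^2$ — to transfer bounds from $\lambda_-$ to $\lambda_+$ in the low-frequency case, and to compute $|\lambda_\pm|$ without extracting square roots in the high-frequency case.
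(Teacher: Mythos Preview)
Your proof is correct. The paper does not actually prove this lemma: it is stated in the appendix as part of the spectral material imported from \cite{AIM}, with no argument given. Your elementary verification via Vieta's relations $\lambda_++\lambda_-=-1$, $\lambda_+\lambda_-=|\xi|^2$ is exactly the natural way to establish these bounds, and all the steps check out.
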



Thanks to Proposition \ref{pr11}, decomposing the initial data $\left(E^{0c},B^{0c}\right)\in X$ and the source terms 
$G^c\in L^1_{\mathrm{loc}}\left(\mathbb{R}^+;L^2\left(\mathbb{R}^3\right)\right)$ 
using the eigenspaces of $\hat{\mathcal L}(\xi)$, we write for almost every $\xi\in\mathbb{R}^3$,

\begin{equation}\label{Duha}
	\begin{pmatrix}
		\hat E^{0} \\ \hat B^{0}
	\end{pmatrix}
	=
	\begin{pmatrix}
		\frac{\xi\cdot\hat E^{0}}{|\xi|^2}\xi \\ 0
	\end{pmatrix}
	+
	\begin{pmatrix}
		e^{0} \\ \frac{-i}{\lambda_-}\xi\times e^{0}
	\end{pmatrix}
	+
	\begin{pmatrix}
		\frac{-i}{\lambda_-}\xi\times b^{0} \\ b^{0}
	\end{pmatrix},
\end{equation}
where $\xi\cdot e^{0}=\xi\cdot b^{0}=0$, and
\begin{equation}
\label{Duha3}
	\begin{pmatrix}
		\hat G \\ 0
	\end{pmatrix}
	=
	\begin{pmatrix}
		\frac{\xi\cdot\hat G}{|\xi|^2}\xi \\ 0
	\end{pmatrix}
	+
	\begin{pmatrix}
		e \\ \frac{-i}{\lambda_-}\xi\times e
	\end{pmatrix}
	+
	\begin{pmatrix}
		\frac{-i}{\lambda_-}\xi\times b \\ b
	\end{pmatrix},
\end{equation}
where $\xi\cdot e=\xi\cdot b=0$. 

Next, in view of Proposition \ref{pr11}, the semigroup $e^{t\hat{\mathcal{L}}}$ acts on \eqref{Duha1} as
\begin{equation*}
	e^{t\hat{\mathcal{L}}}
	\begin{pmatrix}
		\hat E^{0} \\ \hat B^{0}
	\end{pmatrix}
	=
	e^{- t}
	\begin{pmatrix}
		\frac{\xi\cdot\hat E}{|\xi|^2}\xi \\ 0
	\end{pmatrix}
	+e^{t\lambda_-}
	\begin{pmatrix}
		e^{0} \\ \frac{-i}{\lambda_-}\xi\times e^{0}
	\end{pmatrix}
	+e^{t\lambda_+}
	\begin{pmatrix}
		\frac{-i}{\lambda_-}\xi\times b^{0} \\ b^{0}
	\end{pmatrix},
\end{equation*}
and on \eqref{Duha3} as
\begin{equation*}
	\begin{aligned}
		\int_0^t
		e^{(t-\tau)\hat{\mathcal{L}}}
		\begin{pmatrix}
			\hat G \\ 0
		\end{pmatrix}(\tau)d\tau
		& =\int_0^t
		e^{- (t-\tau)}
		\begin{pmatrix}
			\frac{\xi\cdot\hat G}{|\xi|^2}\xi \\ 0
		\end{pmatrix}d\tau
		\\
		& +\int_0^te^{(t-\tau)\lambda_-}
		\begin{pmatrix}
			e \\ \frac{-i}{\lambda_-}\xi\times e
		\end{pmatrix}d\tau
		\\
		& +\int_0^te^{(t-\tau)\lambda_+}
		\begin{pmatrix}
			\frac{-i}{\lambda_-}\xi\times b \\ b
		\end{pmatrix}
		d\tau.
	\end{aligned}
\end{equation*}
Therefore, Duhamel's formula \eqref{Duha} yields that
\begin{equation*}
	\hat B(t)
	= \frac{-ic}{\lambda_-}e^{t\lambda_-} \xi\times e^{0}
	+e^{t\lambda_+} b^{0}
	+\int_0^t\left(\frac{-i}{\lambda_-} e^{(t-\tau)\lambda_-}
	\xi\times e
	+ e^{(t-\tau)\lambda_+} b\right)
	d\tau.
\end{equation*}
Further substituting
\begin{equation*}
	\begin{aligned}
		\frac{ic}{\lambda_-}\xi\times e^{0} & =b^{0}-\hat B^{0}, \\
		b & = \frac{i}{\lambda_-}\xi\times e,
	\end{aligned}
\end{equation*}
which is deduced from the second components of \eqref{Duha1} and \eqref{Duha3}, we obtain
\begin{equation*}
	\hat B(t)
	= e^{t\lambda_-} \hat B^{0}
	+\left(e^{t\lambda_+}-e^{t\lambda_-}\right) b^{0}
	+ \frac{i}{\lambda_-}
	\int_0^t\left( e^{(t-\tau)\lambda_+}
	- e^{(t-\tau)\lambda_-} \right) \xi\times e
	d\tau.
\end{equation*}

\end{document}